\documentclass[11pt]{article}
\usepackage[toc,title,page]{appendix}
\usepackage[usenames]{color}
\usepackage{amssymb,amsmath,latexsym}
\numberwithin{equation}{section}
\usepackage{amsthm}
\usepackage[active]{srcltx}
\usepackage[mathscr]{eucal}
\parskip 2.0cm
\usepackage{setspace}
\usepackage[color=yellow,textsize=small]{todonotes}
\usepackage{geometry}
\usepackage{multirow}
\usepackage{graphicx}
\usepackage{float}
\geometry{paper=letterpaper,
	top=1.0in, bottom=1.0in,
	inner=1in, outer=1in,
	footskip=0.5in}

\usepackage{amsfonts}
\usepackage{amssymb}
\usepackage{amsthm}
\usepackage{newlfont}
\usepackage{stmaryrd}
\usepackage{mathrsfs}
\usepackage[colorlinks,
            linkcolor=blue,
            anchorcolor=blue,
            citecolor=blue
            ]{hyperref}
\usepackage{indentfirst}
\usepackage{cases}
\usepackage{bookmark}
\usepackage{graphicx}
\usepackage{pdfpages}
\usepackage{setspace}
\usepackage{abstract}
\usepackage{cite}

\newsavebox{\mybox}

\allowdisplaybreaks[4]

\newtheorem{thm}{Theorem}[section]
\newtheorem{cor}[thm]{Corollary}
\newtheorem{lem}[thm]{Lemma}
\newtheorem{prop}[thm]{Proposition}
\newtheorem{defi}[thm]{Definition}

\newtheorem{rem}[thm]{Remark}

\def\theequation{\arabic{section}.\arabic{equation}}

\def\${|\!|\!|}
\def\({\left(}
\def\){\right)}

\newcounter{bean}
\newcommand{\benuma}{\setlength{\labelwidth}{.25in}
	\begin{list}%
		{(\alph{bean})}{\usecounter{bean}}}
	\newcommand{\eenuma}{\end{list}}

\newcommand{\bi}{\begin{itemize}}
	\newcommand{\ei}{\end{itemize}}
\newcommand{\be}{\begin{enumerate}}
	\newcommand{\ee}{\end{enumerate}}
\newcommand{\beqs}{\begin{equation*}}
\newcommand{\eeqs}{\end{equation*}}
\newcommand{\beq}{\begin{equation}}
    \newcommand{\eeq}{\end{equation}}
    \newcommand{\bald}{\begin{aligned}}
        \newcommand{\eald}{\end{aligned}}
\newcommand{\beqys}{\begin{eqnarray*}}
	\newcommand{\eeqys}{\end{eqnarray*}}
\newcommand{\beqy}{\begin{eqnarray}}
\newcommand{\eeqy}{\end{eqnarray}}

\newcommand{\dif}{\mathrm{d}}

\newcommand{\E}{\mathbb{E}}

\mathchardef\mhyphen="2D
\newcommand{\bp}{\begin{pmatrix}}
	\newcommand{\ep}{\end{pmatrix}}
\setlength{\parskip}{\medskipamount}
\begin{document}
\title{ \bf Robust pointwise  second-order necessary conditions for singular stochastic optimal control with model uncertainty  }
\date{}

\author{Guangdong Jing\thanks{ 
Guangdong Jing  \\ \indent  School of Mathematics and Statistics,   \\ \indent 
Beijing Institute of Technology \\  \indent Beijing 100081, China \newline  \indent  
{jingguangdong@mail.sdu.edu.cn}} }

\maketitle
\begin{abstract}
  \noindent
  We investigate the singular stochastic optimal control problem with model uncertainty, where the necessary conditions determined by the corresponding maximum principle are trivial. We derive robust integral form and pointwise second-order necessary optimality conditions involving integrals with respect to certain reference probabilities, under specific compactness and monotonicity. Both the drift and diffusion terms depend on the control, and the control regions are assumed to be convex because saddle point analyses are crucial for deriving the variational inequality with a common reference probability for any admissible control. Other main technical components for the integral type conditions include weak convergence arguments and the minimax theorem, while the pointwise conditions involve the Clark-Ocone formula and Lebesgue differentiation type theorem. Additionally, an example is provided to demonstrate the motivation and effectiveness of the results.
\end{abstract}

\bigskip

\noindent{\bf AMS  subject classifications}.   93E20, 60H07, 60H10

\bigskip

\noindent{\bf Key Words}. Stochastic optimal control, Model uncertainty,  Maximum principle, Second-order necessary conditions, Clark-Ocone formula 

\section{Introduction}

In optimal control theory, one of the most important methods is maximum principle. It asserts that the optimal control should maximize the corresponding Hamiltonian $H$, and actually is a first-order necessary condition for the optimality. Formally, 
\[H(t,\overline{x},\overline{u})=\max_{v\in U}H(t,\overline{x},v)\quad  a.e.\  (t,\omega )\in [0,T]\times \Omega. \]
Related works in ordinary differential equations setting can date back to~\cite{PontryaginBoltyanskiiGamkrelidzeMishchenko1962}.
However, there are singular cases in both deterministic and stochastic settings with different Hamiltonians. Taking the stochastic case as an example, 
the optimal contol $\bar u(\cdot)$  is called singular optimal control  in the classical sense, if 
the gradient and the Hessian of the Hamiltonian with respect to the control variable are degenerate, i.e. 
\[H_u(t,\overline{x},\overline{u})=0, \quad H_{uu}(t,\overline{x},\overline{u})=0,\quad a.e.\  (t,\omega )\in [0,T]\times \Omega.\]
In the singular case, the first-order necessary  condition can not provide enough information for the theoretical analysis and numerical computing.
In the face of this situation,  works on the second-order necessary conditions for deterministic optimal control problems follow closely \cite{GabasovKirillova1972,warga1978jde}.
We mention that there are also results about second-order necessary conditions for deterministic optimal control problem on Riemannian manifolds~\cite{cuiqingdenglizhangxu2016crmasp,cuiqingdenglizhangxu2019esaim,denglizhangxu2021jde} and recommend \cite{FrankowskaHoehener2017jde, FrankowskaOsmolovskii2020scl, FrankowskaOsmolovskii2019amo, FrankowskaOsmolovskii2018siam, Hoehener2013amo,louhongwei2010dcds, louhongweiyongjiongmin2018mcrf, Nguyen2019p, Osmolovskii2020jota, Osmolovskii2018jmaa} among numerous others for the second-order  necessary conditions of  deterministic optimal control problem to interested readers.

Recently, the second-order necessary conditions for singular \emph{stochastic} optimal control problems also have attracted rather extensive attention. For example, in the classical stochastic differential equations setting: 
\cite{tangshanjian2010dcds} the diffusion term of the control system is control independent and the control regions are allowed to be nonconvex; 
\cite{zhanghaisenzhangxu2015siam} with convex control constraint and diffusion term contains control;
further \cite{Frankowskazhanghaisenzhangxu2017jde,zhanghaisenzhangxu2017siam,zhanghaisenzhangxu2018siamreview,zhanghaisenzhangxu2016scm} with general control constraint and control dependent diffusion term;
and further \cite{Frankowskazhanghaisenzhangxu2019transams,Frankowskazhanghaisenzhangxu2018siam} with additional state constraints; \cite{BonnansSilva2012amo} with either convex control constraints or finitely many equality and inequality constraints over the final state.
The second-order necessary optimality conditions in  controlled stochastic evolution equations setting: \cite{luqi2016conference,luqizhanghaisenzhangxu2021siam} with convex control constraint and diffusion term contains control; further \cite{Frankowskazhangxu2020spa} with more general control constraint;
and further \cite{Frankowskaluqi2020jde} with adding control and state constraints.
A second-order stochastic maximum principle for generalized mean-field singular control problem  \cite{guohanchengxiongjie2018mcrf} with diffusion term independent of control variable.
A second-order maximum principle for singular optimal controls with recursive utilities of stochastic systems \cite{dongyuchaomengqinxin2019jota} and stochastic delay systems \cite{haotaomengqingxin2019ejc}, in both of which  the control domain is nonconvex and the diffusion coefficients are independent of control.

It is well-known that there are essential difficulties in stochastic optimal control problems compared with the deterministic setting. Typically, when the diffusion term of the state equation involves control with nonconvex control constraint, in general the Ekeland topology is needed, under which the estimation $\mathbb{E} \big|\int_{0}^{T} \chi_{E_\varepsilon} \mathrm{d}W(t)\big|^2=\varepsilon$ is weaker than that $ \big|\int_{0}^{T} \chi_{E_\varepsilon} \mathrm{d}t\big|^2=\varepsilon^2$  in the deterministic setting. It causes problems in the estimation of the  Taylor expansion of the cost functions.
Therefore,  the study on the  first-order necessary conditions of stochastic optimal control problems for the general case was not achieved until~\cite{pengshige1},  motivated by the theory of non-linear backward stochastic differential equations \cite{pardouxpengshige} and the additionally introduced adjoint equations, although for some special cases it can be traced back to \cite{KushnerSchweppe1964,KushnerHJ}.
In particular, results on the second-order necessary conditions for singular stochastic optimal control problems were firstly presented much later in~\cite{MahmudovBashirov1997}.

Note that all the results mentioned above for singular optimal stochastic control problems are not robust.
In order to obtain robust results, in this paper we  take into account the model uncertainty originally proposed by Hu and Wang \cite{humingshangwangfalei2020siam}.
Generally speaking, for the It\^{o} stochastic differential equations (which are  powerful in modelling stochastic dynamics among economics, molecular biology, meteorology, etc.),  the drift and diffusion coefficients  are not exactly known in general. We may guess that it obeys certain family of distributions, but we do not know in advance what a distribution should occur. 
For example, the controlled systems are different for bull markets or bear markets  relatively, and the corresponding cost functions are also different, denoting respectively by $\mathcal J_1(u(\cdot))$ and $\mathcal J_2(u(\cdot))$. But actually the probability for experiencing a bull market is unknown. One should characterize the cost functional by \[\sup_{ \mathfrak{i}  \in[0,1]}\left\{ \mathfrak{i}  \mathcal J_1(u(\cdot))+(1- \mathfrak{i}  )\mathcal J_2(u(\cdot))\right\}.\]
See also He, Luo, and Wang \cite{hlw23} for a mean-variance portfolio selection model under such a model uncertainty setting. 
We formally introduce the formulation here, and it will be made rigorous afterwards.  $\Gamma$ is a locally compact Polish space and $\gamma\in\Gamma$ represents  different market conditions. The controlled state process is governed by
\begin{equation}\label{state-equa-gene-origi}
\left\{
\begin{aligned}
&\mathrm{d}x_\gamma(t)=b_\gamma(t,x_\gamma(t),u(t))\mathrm{d}t
    +\sigma_\gamma(t,x_\gamma(t),u(t))\mathrm{d}W(t),    \quad   t\in[0,T],       \\
&x_\gamma(0)=x_0.
\end{aligned}
\right.
\end{equation}
Let $\Lambda=\{\lambda \}$ be the set of all possible probability distributions of the uncertainty parameter $\gamma$. For any given  parameter $\gamma$ and measure $\lambda $, the cost functionals are formulated as 
\begin{equation*}
   \mathcal J(u(\cdot);\gamma ):= \mathbb{E}\Big(\int_0^Tf_\gamma(t,x_\gamma(t),u(t))\mathrm{d}t
         +h_\gamma(x_\gamma(T))\Big) ,
  \end{equation*} 
  and then denote 
  \begin{equation*}
    J(u(\cdot);\lambda ):= \int_{\Gamma} \mathcal  J(u(\cdot);\gamma )\lambda(\mathrm{d}\gamma).
  \end{equation*}
Taking the above model uncertainty into consideration, the robust cost functional has the following form
\begin{equation}\label{cost-function-origi}
  J(u(\cdot)):=\sup_{\lambda\in\Lambda} J(u(\cdot);\lambda ).
\end{equation}
It involves a supremum over a family of probability measures. 
Note that the measurability of $\mathcal  J(u(\cdot);\gamma ), \gamma \in \Gamma $, and the well-posedness of $\sup_{\lambda\in\Lambda} J(u(\cdot);\lambda )$ are all essential problems that will be proved later. Now we introduce the concept of admissible control before presenting the optimal control problems. 

\begin{defi}
An $\mathbb{F}$-progressively measurable stochastic process $u(\cdot):[0,T]\times\Omega\to U\subset\mathbb{R}^m$ is called an admissible control, if $u(\cdot)\in \mathcal{U}^\beta:=\{u(\cdot)\in L_\mathbb{F}^\beta (\Omega; L^\beta ([0,T]; \mathbb{R}^m)), \text{ valued in}\  U\}$ for $\beta \ge1$. 
\end{defi}
The singular stochastic optimal control problem is to find an admissible $\bar{u}(\cdot)\in\mathcal{U}^\beta $, such that
\begin{equation}\label{jujubint}
J(\bar{u}(\cdot))=\inf_{u(\cdot)\in\mathcal{U}^\beta }J(u(\cdot)).
\end{equation}
Any $\bar{u}(\cdot)\in\mathcal{U}^\beta $ satisfying \eqref{jujubint} is called an optimal control with the initial state $x_0$. The corresponding state ${\bar x}_\gamma(\cdot;\bar u(\cdot))$ is called an optimal state, and $(\bar{u}(\cdot),{\bar x}_\gamma(\cdot))$ an optimal pair. 
Formally, by taking Taylor expansions of the variational cost functional $J(u^\varepsilon(\cdot))$ $=$ $J(\bar{u}(\cdot)+\varepsilon (u(\cdot)-\bar{u}(\cdot)))$ with respect to the perturbation parameter $\varepsilon $ in the convex control region, we obtain 
\begin{equation*}
  J(u^\varepsilon )=J(\bar{u} )+\varepsilon\,  \mathbb E\int_{0}^{T} \psi_{1}(u ,\bar{u} ) \mathrm d t+\varepsilon ^2 \,  \mathbb E\int_{0}^{T}  \psi_{2}(u ,\bar{u} )\mathrm d t +o (\varepsilon^2) \quad \text{as} \  \varepsilon \to0.
\end{equation*}
Based on the above expression, a singular optimal control problem in the classical sense refers to the case where the optimal control cannot be uniquely determined by the first-order necessary optimality condition $\mathbb E\int_{0}^{T} \psi_{1}(u ,\bar{u} ) \mathrm d t \ge 0$. In contrast, $\mathbb E\int_{0}^{T} \psi_{1}(u ,\bar{u} ) \mathrm d t=0$ holds trivially. 
Therefore, we should naturally further study the second-order necessary conditions by analyzing in details the limit
\begin{align*}
    \lim_{\varepsilon \to 0}\frac{1}{\varepsilon^2} \(J(u^\varepsilon )-J(\bar{u} )\)
\end{align*}
characterized by the integral type variational inequality 
\begin{equation*}
    \begin{aligned}
        \mathbb E\int_{0}^{T}  \psi_{2}(u ,\bar{u} )\mathrm d t\ge0, \quad  \forall u\in \mathcal{U}^\beta,
    \end{aligned}
\end{equation*}
or further by the pointwise condition  
\begin{equation*}
      \begin{aligned}
        \psi_{2}(v,\bar{u} )\ge0, \quad  \forall v\in U \  a.s.\ a.e.  
      \end{aligned}
\end{equation*}

Inspired by \cite{hlw23,humingshangwangfalei2020siam,luqizhanghaisenzhangxu2021siam,zhanghaisenzhangxu2015siam,zhanghaisenzhangxu2018siamreview}, we formulate the \textit{singular} (see Definition \ref{singularoptimalcontroldef}) stochastic optimal control problems \eqref{state-equa-gene-origi}-\eqref{jujubint} in the context of model uncertainty.

The objective of this paper is to establish the well-posedness of the formulation, and to derive its second-order necessary optimality conditions. 
We will first derive the conditions in integral form and then establish pointwise conditions with additional assumptions, as the latter are more straightforward to verify in practical applications.
Intuitively, characterizing the second-order necessary conditions involves studying the exact expressions of the $\psi_{1}(u(\cdot),\bar{u}(\cdot))$ and $\psi_{2}(u(\cdot),\bar{u}(\cdot))$. To achieve the purpose, we assume that the control region $U$ is convex and the set of probability measures $\Lambda$ on $(\Gamma,\mathcal{B}(\Gamma))$ is weakly compact and convex. 
Apart from standard linearization techniques, duality principles, and density arguments, we employ methods as detailed in the aforementioned inspiring literature such as weak convergence analysis, saddle point analysis, and Malliavin calculus. Based on these analyses and by developing targeted approaches, we derive a variational inequality that involves the integral 
of a nonlinear combination of variational processes, adjoint processes, derivatives of Hamiltonian, and other coefficients 
with respect to certain reference probabilities. 
The results are robust, enhancing their practical applicability. We provide an example in Section~\ref{secexam} to illustrate the underlying motivation and demonstrate the effectiveness of the results.

To conclude the introduction, we would like to make some remarks on our results and strategies that differ from the existing literature. 
The formulations \eqref{state-equa-gene-origi}-\eqref{jujubint} essentially represent a delicate ``$\inf\sup$ problem'' due to the supremum in the cost functionals taken over a family of probability measures. Therefore, we need to apply weak convergence analysis as that in \cite{hlw23,humingshangwangfalei2020siam} alongside classical variational analysis. 
Additionally, in the singular formulation, we need to further establish the measurability and regularity of the second-order derivatives of the value function and related items arising from it.
The presence of these second-order derivatives and the second variational processes also necessitates a nonlinear framework since the crucial linear relations found in the first-order linearized process do not hold.
To derive the pointwise variational inequality with a common reference probability for any admissible control, we conduct saddle point analyses and weak convergence arguments with certain monotonicity conditions regarding the controls. 
Additionally, due to the uncertainty inherent in the control model, we must establish uniform regularities involving the uncertainty parameter $\gamma$ to manage the remainder terms $o (\varepsilon^2)$ effectively throughout the variational analysis. 
What's more, to derive the pointwise necessary optimality conditions from the integral type by performing density arguments thoroughly, including temporal, spatial, and sample variables, we employ tools in Malliavin calculus first proposed in \cite{zhanghaisenzhangxu2015siam} and extended in \cite{zhanghaisenzhangxu2018siamreview,luqizhanghaisenzhangxu2021siam}, to cope with obstacles related to the integrable order deficiencies for multiple integrals of mixed Lebesgue and It\^o types for terms emerging in $\psi_{2} (u(\cdot), \bar{u} (\cdot))$ like 
$\int_\Gamma \mathbb{E} \int_{\tau_1}^{\tau_2} {\langle \phi_1(t;\gamma )\int_{\tau_1}^t\phi_2(s;\gamma )\mathrm{d}W(s),\phi_3(t;\gamma ) \rangle} \mathrm{d}t  \lambda(\mathrm{d}\gamma)$. To apply the methods, we establish lemmas of Lebesgue differentiation and Malliavin approximation tailored to our model, both of which are crucial in the density arguments.

The rest of this paper is organized as follows.
Section~\ref{semalli} is devoted to introducing preliminaries in Malliavin calculus.
Section~\ref{seformmainres} rigorously formulates the singular stochastic optimal control problem and presents the principal results alongside some essential preliminary results. 
An example is provided in Section~\ref{secexam} to illustrate the motivation and effectiveness of the results. The proof of the main results are completed in Section~\ref{secprothfirinter} and Section~\ref{seclapoisecpro} separately. Several fundamental lemmas are provided in appendix.

\subsection*{Notations}
Let $(\Omega,\mathcal{F},\mathbb{F},\mathbb{P})$ be a complete filtered probability space, on which a standard one-dimensional Brownian motion $W={\{W_t\}}_{t\in[0,T]}$ is defined, and $\mathbb{F}={\{\mathcal{F}_t\}}_{t\in[0,T]}$ is the natural filtration of $W$ augmented by all the $\mathbb{P}$-null sets in $\mathcal{F}$ where $T>0$ is any fixed time horizon. Denote by $\mathbb{E}[\cdot]$ the expectation with respect to $\mathbb{P}$.
Denote by $\mathbb{R}^{m\times n}$ the space of all $m\times n$ real matrices. For any $A\in\mathbb{R}^{m\times n}$, denote by $A^\top$  the transpose of $A$ and  by $|A|$ the norm of $A$ where $|A|:=\sqrt{tr\{AA^\top\}}$. Denote by $\langle\cdot,\cdot\rangle$ the inner product in $\mathbb{R}^n$ or $\mathbb{R}^m$ without confusion. Denote by $\mathbf{S}^n$ the set of $n\times n$ symmetric real matrices.
Denote by $\partial_{(x,u)^2}\varphi (t,x,u)$ the Hessian of $\varphi $ with respect to $(x,u)$ at $(t,x,u)$. 
$|\xi_1| \lesssim \xi_2$ means that there is a constant $\mathcal{C}\ge0$, such that $|\xi_1| \le \mathcal{C} \xi_2$. $|\xi|=O(1)$ means that there are non-negative constants $\mathcal{C}_1, \mathcal{C}_2$, such that $\mathcal{C}_1 \le |\xi| \le \mathcal{C}_2$. $|\xi|=O(\varepsilon ),\ \varepsilon \to 0^+ $ means that there are non-negative constants $\mathcal{C}_1, \mathcal{C}_2$, such that $\mathcal{C}_1 \varepsilon  \le |\xi| \le \mathcal{C}_2 \varepsilon$, $ \varepsilon \to 0^+ $. That for $|\xi|=O(\varepsilon^2 )$ and $|\xi|=O(\varepsilon^3 ),\ \varepsilon \to 0^+$ are similar.

For $\varphi(\cdot):\mathbb{R}^n\to\mathbb{R}$, $\partial_x\varphi(\cdot)=(\frac{\partial\varphi}{\partial x_1}(\cdot),\ldots,\frac{\partial\varphi}{\partial x_n}(\cdot))^\top$ is a column vector, and for $\varphi(\cdot):\mathbb{R}^m\times\mathbb{R}^n\to\mathbb{R}$, $x\in\mathbb{R}^n$, $u\in\mathbb{R}^m$, denote $\partial_{xu}\varphi(\cdot)$ the multiple partial derivatives of $\varphi$ with respect to $(x,u)$, a $m\times n$ matrix valued map defined on $\mathbb{R}^m\times \mathbb{R}^n$.
For any $\alpha,\beta\in[1,\infty)$, $t\in[0,T]$, denote by

\begin{itemize}
  \item $L_{\mathcal{F}_t}^\beta(\Omega;\mathbb{R}^n)$ the space of $\mathbb{R}^n$-valued $\mathcal{F}_t$-measurable random variable $\xi$ satisfying
$\|\xi\|_\beta:=\(\mathbb{E}|\xi|^\beta\)^{\frac{1}{\beta} }<\infty$;
\item  $L_\mathbb{F}^\beta (\Omega; L^\alpha ([0,T]; \mathbb{R}^n))$  the space of $\mathbb{R}^n$-valued $\mathcal{B}([0,T])\otimes\mathcal{F}$-measurable, and $\mathbb{F}$-adapted  processes $\varphi$ on $[0,T]$, satisfying $\|\varphi\|_{\alpha ,\beta }:=\big[\E(\int_{0}^{T}|\varphi(t)|^\alpha \dif t)^{\frac{\beta }{\alpha } }\big]^{\frac{1}{\beta } }<\infty$;
\item $L_\mathbb{F}^\beta (\Omega; C([0,T]; \mathbb{R}^n))$  the space of $\mathbb{R}^n$-valued $\mathcal{B}([0,T])\otimes\mathcal{F}$-measurable, and $\mathbb{F}$-adapted continuous processes $\varphi$ on $[0,T]$, satisfying
$\|\varphi\|_{\infty,\beta}:=\big[\mathbb{E}\big(\sup_{t\in[0,T]}\left|\varphi(t)\right|^\beta\big)\big]^\frac{1}{\beta}<\infty;$
\item $L^\infty ([0,T]\times\Omega;\mathbb{R}^n)$ the space of $\mathbb{R}^n$-valued $\mathcal{B}([0,T])\otimes\mathcal{F}$-measurable processes $\varphi$ satisfying
$\|\varphi\|_{\infty}:=\mathop{\text{ess}\sup}_{(t,\omega)\in[0,T]\times\Omega}\left|\varphi(t,\omega)\right|<\infty.$
\end{itemize}
Besides,
$L_\mathbb{F}^\beta (\Omega; C([0,T]; \mathbf{S}^n))$, $L_\mathbb{F}^\beta (\Omega; L^2(0,T; \mathbf{S}^n))$ are similarly defined.

\section{Preliminaries in Malliavin calculus}\label{semalli}
A detailed introduction to Malliavin calculus can be found in the textbook~\cite{NualartDavidEulalia}. Here only some elementary definitions and symbols to be used are presented.

For any $h\in L^2(0,T)$, denote $W(h)=\int_0^T h(t)\mathrm{d}W(t)$.
For any smooth $\mathbb{R}^n$-valued random variable of the form  
\[F=\sum_{j=1}^{n}f_j(W(h_1),W(h_2),\ldots,W(h_{j_m})) , \]
where 
$f_j\in C_b^\infty(\mathbb{R}^{j_m}), h_{j_k}\in L^2(0,T), j=1,\dots,n, j_k=1,2,\ldots,j_m, n,j_m\in\mathbb{N}$, the Malliavin derivative of $F$ is defined as 
\[\mathcal{D}_t F=\sum_{j=1}^n\sum_{j_k=1}^{j_m}h_{j_k}(t)\frac{\partial f_j}{\partial x_{j_k}}\left(W(h_1),W(h_2),\ldots,W(h_{j_m})\right) .\]
Then $\mathcal{D} F$ is a smooth random variable with values in $L^2(0,T; \mathbb{R}^n)$.
Denote by $ \mathbb{D}^{1,2}(\mathbb{R}^n)$  the completion of the set of the above smooth random variables with respect to the norm 
\[|F|_{\mathbb{D}^{1,2}}=\Big(\mathbb{E}|F|^2+\mathbb{E} \int_0^T|\mathcal{D}_t F|^2 {d}t \Big)^{\frac{1}{2}}.\]

For $F\in\mathbb{D}^{1,2}(\mathbb{R}^n)$, the following Clark-Ocone representation formula holds:
\begin{equation}\label{claocoori}
  F=\mathbb{E} F+\int_0^T\mathbb{E}\left(\mathcal{D}_t F | \mathcal{F}_t\right)\mathrm{d}W(t).
\end{equation}

Define $\mathbb{L}^{1,2}(\mathbb{R}^n)$ to be the space of processes $\varphi\in L^2([0,T]\times \Omega;\mathbb{R}^n)$, such that

(i) for a.e. $t\in[0,T]$, $\varphi(t,\cdot)\in\mathbb{D}^{1,2}(\mathbb{R}^n)$,

(ii) the function $\mathcal{D}_\cdot\varphi(\cdot,\cdot):[0,T]\times[0,T]\times\Omega\to\mathbb{R}^n$ admits a $\mathcal{B}([0,T]\times[0,T])\otimes\mathcal{F}_T$-measurable version,

(iii) $\$\varphi\$_{1,2}:=\left(\mathbb{E}\int_0^T|\varphi(t)|^2\mathrm{d}t +\mathbb{E}\int_0^T\int_0^T|\mathcal{D}_s\varphi(t)|^2\mathrm{d}s\mathrm{d}t\right)^{\frac{1}{2}}<\infty$.

The set of all $\mathbb{F}$-adapted processes in $\mathbb{L}^{1,2}(\mathbb{R}^n)$ is denoted by $\mathbb{L}_{\mathbb{F}}^{1,2}(\mathbb{R}^n)$.

Moreover, define
\begin{equation*}
\begin{aligned}
\mathbb{L}_{2^+}^{1,2}(\mathbb{R}^n):=\Big\{& \varphi(\cdot)\in\mathbb{L}^{1,2}(\mathbb{R}^n) \mid \exists\ \mathcal{D}^+\varphi(\cdot)\in L^2([0,T]\times \Omega;\mathbb{R}^n) \ \text{such that}  
\\&   
g_\alpha (s):=\sup_{s<t<(s+\alpha)\wedge T}\mathbb{E}|\mathcal{D}_s\varphi(t)-\mathcal{D}^+\varphi(s)|^2<\infty\quad  a.e.\  s\in[0,T],   \\
&  g_\alpha(\cdot)\ \text{is measurable on}\ [0,T]\ \text{for any}\ \alpha>0,\ \text{and}\ \lim_{\alpha\to0^+}\int_0^T g_\alpha(s)\mathrm{d}s=0 \Big\},
\end{aligned}
\end{equation*}
and
\begin{equation*}
\begin{aligned}
\mathbb{L}_{2^-}^{1,2}(\mathbb{R}^n):=\Big\{& \varphi(\cdot)\in\mathbb{L}^{1,2}(\mathbb{R}^n) \mid \exists\ \mathcal{D}^-\varphi(\cdot)\in L^2([0,T]\times \Omega;\mathbb{R}^n) \ \text{such that}  
\\&   
g_\alpha(s):=\sup_{(s-\alpha)\vee0<t<s}\mathbb{E}|\mathcal{D}_s\varphi(t)-\mathcal{D}^-\varphi(s)|^2<\infty\quad  a.e.\  s\in[0,T],   
\\&  
g_\alpha(\cdot)\ \text{is measurable on}\ [0,T]\ \text{for any}\ \alpha>0,\ \text{and}\ \lim_{\alpha\to0^+}\int_0^T g_\alpha(s)\mathrm{d}s=0 \Big\}.
\end{aligned}
\end{equation*}
Let $\mathbb{L}_{2}^{1,2}(\mathbb{R}^n)=\mathbb{L}_{2^+}^{1,2}(\mathbb{R}^n)\cap\mathbb{L}_{2^-}^{1,2}(\mathbb{R}^n)$.
For any $\varphi\in\mathbb{L}_{2}^{1,2}(\mathbb{R}^n)$, denote $\nabla\varphi(\cdot)=\mathcal{D}^+\varphi(\cdot)+\mathcal{D}^-\varphi(\cdot)$.
$\mathbb{L}_{2,\mathbb{F}}^{1,2}(\mathbb{R}^n)$ consists of all the adapted processes in $\mathbb{L}_{2}^{1,2}(\mathbb{R}^n)$.

\section{Formulation and main results}\label{seformmainres}
Consider the following controlled stochatic system:
\begin{equation}\label{state-equa-gene-main}
\left\{
\begin{aligned}
&\mathrm{d}x_\gamma(t)=b_\gamma(t,x_\gamma(t),u(t))\mathrm{d}t
    +\sigma_\gamma(t,x_\gamma(t),u(t))\mathrm{d}W(t),    \quad   t\in[0,T],\gamma\in\Gamma,       \\
&x_\gamma(0)=x_0,
\end{aligned}
\right.
\end{equation}
and the robust cost functional:
\begin{equation}\label{cost-function-main}
  J(u(\cdot)):=\sup_{\lambda\in\Lambda}\int_{\Gamma}\mathbb{E}\Big[\int_0^T f_\gamma(t,x_\gamma(t),u(t))\mathrm{d}t
       +h_\gamma(x_\gamma(T))\Big]\lambda(\mathrm{d}\gamma).
\end{equation}
The singular stochastic optimal control problem is to minimize the robust cost functional $J(u(\cdot))$ over $u(\cdot)\in\mathcal{U}^\beta $:
\begin{equation}\label{cpmc}
    J(\bar{u}(\cdot))=\inf_{u(\cdot)\in\mathcal{U}^\beta }J(u(\cdot)).
\end{equation}
The term \emph{singular} is interpreted strictly in Definition~\ref{singularoptimalcontroldef}, which in brief characterizes the case where the gradient and the Hessian of the Hamiltonian with respect to  the control variable are degenerate.
In this case the first-order necessary conditions can not provide enough information for the theoretical analysis and numerical computation to uniquely determine the optimal control, then it is worth studying the corresponding second-order necessary conditions.
In this paper, results are obtained under  the following assumptions:
\begin{description}
\item[(H1)]  The control region $U(\subset\mathbb{R}^m, m\in\mathbb{N})$ is nonempty and convex. 
\item[(H2)] $\Gamma$ is a locally compact Polish space with distance $ \mathsf{d} $, where $\gamma\in\Gamma$ is the market uncertainty parameter. $\Lambda$ is a weakly compact and convex set of probability measures on $(\Gamma,\mathcal{B}(\Gamma))$.
  \item[(H3)] (i) For any $ \gamma\in\Gamma$,  $b_\gamma:[0,T]\times \mathbb{R}^n\times U\to \mathbb{R}^n$,
$\sigma_\gamma:[0,T]\times \mathbb{R}^n\times U\to \mathbb{R}^n$,
$f_\gamma:[0,T]\times \mathbb{R}^n\times U\to \mathbb{R}$,
$h_\gamma: \mathbb{R}^n \to \mathbb{R}$,
are Borel measurable functions.

(ii) For any $\gamma\in\Gamma$ and $t\in[0,T]$, the functions $b_\gamma, \sigma_\gamma, f_\gamma, h_\gamma$
are continuously differentiable with respect to $(x,u)$ up to the second order.   Besides, $b, \sigma$ and their partial derivatives are uniformly bounded.

(iii) 
There exists $L>0$  such that for any $t\in[0,T], x,x'\in\mathbb{R}^n, u,u'\in U, \gamma\in\Gamma$,
\begin{equation*}
  \left\{
  \begin{aligned}
  &|b_\gamma(t,x,u)-b_\gamma(t,x',u')|+\left|\sigma_\gamma(t,x,u)-\sigma_\gamma(t,x',u')\right| \le  L|x-x'|,  \\ 
  & |\partial_{(x,u)^2}b_\gamma(t,x,u)-\partial_{(x,u)^2}b_\gamma(t,x',u')|\le L(|x-x'|+|u-u'|), \\ 
  & |\partial_{(x,u)^2}\sigma_\gamma(t,x,u)-\partial_{(x,u)^2}\sigma_\gamma(t,x',u')|\le L(|x-x'|+|u-u'|).
  \end{aligned}
  \right.
\end{equation*}

(iv)  There exists $L>0$  such that for any $t\in[0,T], x,x'\in\mathbb{R}^n, u,u'\in U, \gamma\in\Gamma$,
\begin{equation*}
    \left\{
    \begin{aligned}
&|f_\gamma(t,x,u)|\le L(1+|x|^2+|u|^2), \quad  |h_\gamma(x)|\le L(1+|x|^2 ),\\
&|\partial_x f_\gamma(t,x,u)|+|\partial_u f_\gamma(t,x,u)|\le L(1+|x|+|u|), \\
&|\partial_{xx} f_\gamma(t,x,u)|+|\partial_{xu} f_\gamma(t,x,u)|+|\partial_{uu} f_\gamma(t,x,u)|\le L, \\
& 
|\partial_{(x,u)^2}f_\gamma(t,x,u)-\partial_{(x,u)^2}f_\gamma(t,x',u')|\le L(|x-x'|+|u-u'|), 
\\ 
&|\partial_x h_\gamma(x)| \le L(1+|x| ), \quad |\partial_{xx} h_\gamma(x)| \le L.
    \end{aligned}
    \right.
  \end{equation*}

\item[(H4)] For each $N>0$, there exists a modulus of continuity $\overline{\omega}_N:[0,\infty)\to[0,\infty)$ such that
\begin{equation*}
\begin{aligned}
  |\varphi_\gamma(t,x,u)-\varphi_{\gamma'}(t,x,u)| \le\overline{\omega}_N\left( \mathsf{d} (\gamma, \gamma')\right)
\end{aligned}
\end{equation*}
for any $t\in[0,T], |x|\le N, u \in U, \gamma,\gamma'\in\Gamma$, where $\varphi_\gamma$ represents $b_\gamma, \sigma_\gamma, f_\gamma, h_\gamma$ and its derivatives in $(x,u)$ up to second orders separately.
\end{description}

There are some comments on (H3). This assumption is necessary because, on the one hand, second-order necessary conditions naturally demand more regularity of the coefficients than those required by the stochastic maximum principle. On the other hand, to carry out the weak convergence analysis and obtain the weak limit of the uncertainty measures from the saddle point property, the corresponding integrands, which involve these coefficients, need to be bounded and continuous with respect to the uncertainty parameter. What's more, to perform the Lebesgue-type differentiation theorem following the Clark-Ocone formula, these assumptions are essential to guarantee the differentiability, integrability, and other regularities of the corresponding terms.

Under Assumption (H3), for $u\in\mathcal{U}^\beta $, $\beta \in[2,\infty)$, and $ \mathbb{E}|x_0|^\beta<\infty$,  \eqref{state-equa-gene-main} has a unique solution(cf. \cite[Theorem 3.3.1]{zhjf17}). Besides, it can be verified that 
  (cf.  \cite[Theorem 3.4.3]{zhjf17}): 
\begin{equation}\label{zgjg}
    \bald 
    \sup_{\gamma \in\Gamma }\mathbb{E}\sup_{t\in[0,T]}|x_{\gamma }(t)|^\beta  
&
   \lesssim \sup_{\gamma \in\Gamma } \mathbb{E} \Big[|x_{\gamma }(0)|^\beta +\Big(\int_{0}^{T} |b_\gamma (t,0,u(t))|\dif t\Big)^\beta +\Big(\int_{0}^{T} |\sigma_\gamma  (t,0,u(t))|^2\dif t\Big)^{\frac{\beta}{2}}\Big]  
   \\  & 
   <\infty,
    \eald
\end{equation}
and further for \eqref{cost-function-main}, 
\beq\label{wpec}
\bald 
& \sup_{\gamma \in\Gamma } \Big|\mathbb{E}\Big[\int_0^T f_\gamma(t,x_\gamma(t),u(t))\mathrm{d}t
       +h_\gamma(x_\gamma(T))\Big]\Big|^{\frac{\beta}{2}}
       \\  & \indent 
        \lesssim  \sup_{\gamma \in\Gamma } \Big\{1+\mathbb{E} \int_0^T | u(t) |^\beta \mathrm{d}t +\mathbb{E} \sup_{t\in[0,T]} |x_\gamma(t)|^\beta 
     +\mathbb{E} |x_\gamma(T)|^\beta  \Big\}
       <\infty.
\eald 
\eeq

Denote 
\[v(\cdot):=u(\cdot)-\bar{u}(\cdot),\quad  u^\varepsilon(\cdot):=\bar{u}(\cdot)+\varepsilon v(\cdot), \quad   \delta x_\gamma(\cdot):=x^\varepsilon_\gamma(\cdot)-\bar{x}_\gamma(\cdot),\quad \varepsilon \in(0,1),\]
and for $\varphi_\gamma=b_\gamma, \sigma_\gamma, f_\gamma$, any $\gamma\in\Gamma$,
\begin{equation*}
  \left\{
  \begin{aligned}
    \partial_x \varphi_\gamma(t)&:=\partial_x \varphi_\gamma(t,\bar{x}_\gamma(t),\bar{u}(t)),  \indent\ \  
    \partial_u \varphi_\gamma(t):=\partial_u \varphi_\gamma(t,\bar{x}_\gamma(t),\bar{u}(t)),   
    \\
    \partial_{xx} \varphi_\gamma(t)&:=\partial_{xx} \varphi_\gamma(t,\bar{x}_\gamma(t),\bar{u}(t)),  \indent
    \partial_{xu} \varphi_\gamma(t):=\partial_{xu} \varphi_\gamma(t,\bar{x}_\gamma(t),\bar{u}(t)),   
    \\
    \partial_{uu} \varphi_\gamma(t)&:=\partial_{uu}\varphi_\gamma(t,\bar{x}_\gamma(t),\bar{u}(t)).
  \end{aligned}
  \right.
\end{equation*}

We adopt the classical convex variational method (cf.  \cite{yongjiongminzhouxunyu1999book}) and introduce the following first-order and second-order linearized equations of the state equations~\eqref{state-equa-gene-origi}, parameterized by $\gamma\in\Gamma$:
\begin{equation}\label{variant-equat-first}
\left\{
\begin{aligned}
&\mathrm{d}y_{1,\gamma}(t)=\left[\partial_x b_\gamma(t)y_{1,\gamma}(t)+\partial_u b_\gamma(t)v(t)\right]\mathrm{d}t   +\left[\partial_x\sigma_\gamma(t)y_{1,\gamma}(t)+\partial_u\sigma_\gamma(t)v(t)\right]\mathrm{d}W(t),    \quad   t\in[0,T],       \\
&y_{1,\gamma}(0)=0,
\end{aligned}
\right.
\end{equation}
and
\begin{equation}\label{variant-equat-second}
\left\{
\begin{aligned}
&\mathrm{d}y_{2,\gamma}(t)=\big[\partial_x b_\gamma(t)y_{2,\gamma}(t)  +y_{1,\gamma}(t)^\top\partial_{xx}b_\gamma(t)y_{1,\gamma}(t)  +2v(t)^\top\partial_{xu}b_\gamma(t)y_{1,\gamma}(t)  \\
&\indent\indent\indent  
 +v(t)^\top\partial_{uu}b_\gamma(t)v(t)\big]\mathrm{d}t    +\big[\partial_x\sigma_\gamma(t)y_{2,\gamma}(t) +y_{1,\gamma}(t)^\top\partial_{xx}\sigma_\gamma(t)y_{1,\gamma}(t)  \\
 &\indent\indent\indent
+2v(t)^\top\partial_{xu}\sigma_\gamma(t)y_{1,\gamma}(t) +v(t)^\top\partial_{uu}\sigma_\gamma(t)v(t)\big]\mathrm{d}W(t),    \quad   t\in[0,T],       \\
&y_{2,\gamma}(0)=0.
\end{aligned}
\right.
\end{equation}

\begin{prop}\label{lem-ele-y1y2-delta-x-esti}
Under Assumptions \emph{(H1)} and \emph{(H3)}, for any $\beta\in[1,\infty)$, $u(\cdot), \bar{u}(\cdot)\in \mathcal{U}^\beta$, it yields 
\begin{eqnarray*}
  &&\sup_{\gamma\in\Gamma}\|y_{1,\gamma}\|^\beta_{\infty,\beta}=O(1),\indent \sup_{\gamma\in\Gamma}\|y_{2,\gamma}\|^\beta_{\infty,\beta}=O(1).    
\end{eqnarray*}
Besides, as $ \varepsilon \to 0 $,
\begin{eqnarray*}
     && \sup_{\gamma\in\Gamma}\|\delta x_\gamma\|^\beta_{\infty,\beta}=O(\varepsilon^\beta),   \indent 
    \sup_{\gamma\in\Gamma}\|\delta x_\gamma-\varepsilon y_{1,\gamma}\|^\beta_{\infty,\beta} =O(\varepsilon^{2\beta }), 
    \\  &&  
    \sup_{\gamma\in\Gamma}\big\|\delta x_\gamma-\varepsilon y_{1,\gamma} -\frac{\varepsilon^2}{2}y_{2,\gamma}\big\|^\beta_{\infty,\beta} =O(\varepsilon^{3\beta }).
  \end{eqnarray*}
\end{prop}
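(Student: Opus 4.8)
The plan is to recognise that \eqref{variant-equat-first}, \eqref{variant-equat-second}, and the equations governing $\delta x_\gamma$, $z^1_\gamma:=\delta x_\gamma-\varepsilon y_{1,\gamma}$ and $z^2_\gamma:=\delta x_\gamma-\varepsilon y_{1,\gamma}-\frac{\varepsilon^2}{2}y_{2,\gamma}$ are all linear SDEs driven by a suitable inhomogeneous forcing, and to estimate each of them in $\|\cdot\|_{\infty,\beta}$ by the Burkholder--Davis--Gundy inequality followed by Gronwall's lemma, as in the classical convex variational analysis (cf.\ \cite{yongjiongminzhouxunyu1999book}). The only additional ingredient needed for the present model-uncertainty setting is that every structural constant in (H3) — the uniform bounds on $b_\gamma,\sigma_\gamma$ and their first and second partial derivatives, and the Lipschitz constant $L$ of the Hessians — is independent of $\gamma$. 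Hence every constant produced by BDG and Gronwall depends only on $L$, $T$, $\beta$ and on $\|v\|_{\beta,\beta}\le\|u\|_{\beta,\beta}+\|\bar u\|_{\beta,\beta}<\infty$, never on $\gamma$; this is exactly what upgrades each estimate to a bound uniform over $\gamma\in\Gamma$. Wherever moments of the states appear, I invoke the a priori bound \eqref{zgjg}, which holds uniformly in $\gamma$ and, since $\|u^\varepsilon\|_{\beta,\beta}$ is bounded uniformly for $\varepsilon\in(0,1)$, also uniformly in $\varepsilon$.

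First I would prove $\sup_{\gamma}\|y_{1,\gamma}\|_{\infty,\beta}^\beta=O(1)$. Equation \eqref{variant-equat-first} is linear in $y_{1,\gamma}$ with coefficients $\partial_x b_\gamma,\partial_x\sigma_\gamma$ uniformly bounded by (H3)(ii) and forcing $\partial_u b_\gamma(t)v(t)$, $\partial_u\sigma_\gamma(t)v(t)$ dominated in modulus by $C|v(t)|$, so BDG together with Gronwall gives $\|y_{1,\gamma}\|_{\infty,\beta}^\beta\lesssim\E\big(\int_0^T|v(t)|\,\dif t\big)^\beta+\E\big(\int_0^T|v(t)|^2\,\dif t\big)^{\beta/2}<\infty$, uniformly in $\gamma$. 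Next, \eqref{variant-equat-second} is again linear in $y_{2,\gamma}$ with the same coefficients and a forcing that is a uniformly bounded bilinear expression in $(y_{1,\gamma}(t),v(t))$ (the Hessians of $b_\gamma,\sigma_\gamma$ being bounded); the same scheme, fed with the $y_{1,\gamma}$-estimate applied at whatever moment order Hölder's inequality demands, gives $\sup_{\gamma}\|y_{2,\gamma}\|_{\infty,\beta}^\beta=O(1)$.

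The three perturbation estimates are then obtained in ascending powers of $\varepsilon$. For $\delta x_\gamma$, adding and subtracting $b_\gamma(t,\bar x_\gamma,u^\varepsilon)$ (and the $\sigma_\gamma$-analogue) splits the increments into an $x$-part bounded by $L|\delta x_\gamma|$ and a $u$-part bounded by $C\varepsilon|v|$, whence BDG and Gronwall yield $\sup_\gamma\|\delta x_\gamma\|_{\infty,\beta}^\beta=O(\varepsilon^\beta)$. Taylor-expanding $b_\gamma,\sigma_\gamma$ to first order about $(\bar x_\gamma(t),\bar u(t))$, the process $z^1_\gamma$ solves a linear SDE with coefficients $\partial_x b_\gamma,\partial_x\sigma_\gamma$ and forcing $R^1_\gamma$ with $|R^1_\gamma|\lesssim|\delta x_\gamma|^2+\varepsilon^2|v|^2$ (a first-order Taylor remainder controlled by the bounded Hessians), so substituting the $\delta x_\gamma$-bound at the appropriate (doubled) moment order gives $\sup_\gamma\|z^1_\gamma\|_{\infty,\beta}^\beta=O(\varepsilon^{2\beta})$. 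Finally, Taylor-expanding to second order and writing the remainder in Lagrange integral form, $z^2_\gamma$ solves a linear SDE with the same coefficients and forcing $R^2_\gamma$ obeying $|R^2_\gamma|\lesssim|z^1_\gamma|\big(|\delta x_\gamma|+\varepsilon|y_{1,\gamma}|\big)+\varepsilon|v|\,|z^1_\gamma|+|\delta x_\gamma|^3+\varepsilon^3|v|^3$, the cubic terms coming from the second-order remainder estimated through the Hessian Lipschitz bound in (H3)(iii); since $z^1_\gamma=O(\varepsilon^2)$, $\delta x_\gamma=O(\varepsilon)$ and $y_{1,\gamma}=O(1)$ in the relevant norms, BDG and Gronwall once more give $\sup_\gamma\|z^2_\gamma\|_{\infty,\beta}^\beta=O(\varepsilon^{3\beta})$.

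I expect the main obstacle to be the bookkeeping of the second- and third-order Taylor remainders: one has to display them in integral form, observe that the powers $\varepsilon^2$ and $\varepsilon^3$ arise \emph{jointly} from the already-established smallness of $\delta x_\gamma$ (resp.\ $z^1_\gamma$) and of $\varepsilon v$, and verify that the local Lipschitz continuity of the Hessians in (H3)(iii) is precisely what promotes the second-order remainder from size $\varepsilon^2$ to size $\varepsilon^3$ inside $R^2_\gamma$. A subsidiary point, settled once and for all, is that each lower-order estimate must be re-run at a higher moment exponent before it is substituted; since only finitely many such bootstraps occur and $[0,T]$ is a finite interval, this costs nothing essential and, crucially, leaves the constants $\gamma$-independent, which is what underlies the uniform suprema asserted in the statement.
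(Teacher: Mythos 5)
Your proposal is correct and follows exactly the standard convex-variational scheme (BDG plus Gronwall for the linear(ized) SDEs, Taylor remainders controlled by the uniform Hessian bounds and their Lipschitz continuity, with all constants $\gamma$-independent by (H3)) that the paper itself invokes: the paper omits the proof entirely, referring to \cite{yongjiongminzhouxunyu1999book}. Your closing remark correctly flags the only delicate point, namely that the quadratic and cubic forcings require re-running the lower-order estimates at moment exponents $2\beta$ and $3\beta$, so the integrability of $v$ must be correspondingly higher than the bare $\mathcal{U}^\beta$ stated — a caveat the paper's statement glosses over as well.
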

\begin{proof}
    The proof is lengthy but straightforward,  so we omit the details.  Similar analyses are commonly found in the literature (cf. \cite{yongjiongminzhouxunyu1999book}). 
\end{proof}

\begin{prop}\label{lem-ele-y1y2-delta-x-esti-sec-f}
Under Assumptions \emph{(H1)} and \emph{(H3)}, as $\varepsilon \to 0$, it holds that 
\begin{equation}\label{est-sup-theta-f-h-ep-six1}
\begin{aligned}
&  \sup_{\gamma\in\Gamma}\Big[ \mathbb{E}\Big|h_\gamma(x_\gamma^\varepsilon(T)) -h_\gamma(\bar{x}_\gamma(T))- \langle\partial_x h_\gamma(\bar{x}_\gamma(T)), \delta x_\gamma(T)\rangle   \\
&\indent\indent\indent   -\frac{1}{2}\left\langle\partial_{xx}h_\gamma(\bar{x}_\gamma(T))\delta x_\gamma(T),\delta x_\gamma(T)\right\rangle \Big|\Big] =O(\varepsilon^{3}),
\end{aligned}
\end{equation}
and 
\begin{align}\label{est-sup-theta-f-h-ep-six2}
&  \sup_{\gamma\in\Gamma}\Big[\mathbb{E} \int_0^T \Big|f_\gamma(t,x_\gamma^\varepsilon(t),u^\varepsilon(t)) -f_\gamma(t,\bar{x}_\gamma(t),\bar{u}(t)) -
 \langle\partial_x f_{\gamma}(t),\delta x_{\gamma}(t)\rangle  \notag  \\
&\indent\indent\indent  -\varepsilon \langle\partial_u f_{\gamma}(t),v(t)\rangle -\varepsilon\langle\partial_{xu}f_{\gamma}(t)\delta x_{\gamma}(t),v(t)\rangle \\
&\indent\indent\indent   -\frac{1}{2}\langle\partial_{xx}f_\gamma(t)\delta x_\gamma(t),\delta x_{\gamma}(t)\rangle
-\frac{{\varepsilon}^2}{2}\langle\partial_{uu}f_{\gamma}(t)v(t),v(t)\rangle\Big|\mathrm{d}t  \Big] =O(\varepsilon^{3}).  \notag 
\end{align}
\end{prop}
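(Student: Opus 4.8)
The plan is to recognise the quantities inside the absolute values in \eqref{est-sup-theta-f-h-ep-six1}--\eqref{est-sup-theta-f-h-ep-six2} as the exact second-order Taylor remainders of $h_\gamma$ and of $f_\gamma$ evaluated along the perturbed pair, and then to estimate these remainders by combining the $\gamma$-uniform Lipschitz continuity of the second-order derivatives postulated in (H3) with the $\gamma$-uniform moment bounds for $\delta x_\gamma$ from Proposition~\ref{lem-ele-y1y2-delta-x-esti}. Since $u^\varepsilon(t)-\bar{u}(t)=\varepsilon v(t)$ and, by (H1), $\bar{u}(t)+\theta\varepsilon v(t)=(1-\theta\varepsilon)\bar{u}(t)+\theta\varepsilon u(t)\in U$ for every $\theta\in[0,1]$, the segment joining $(\bar{x}_\gamma(t),\bar{u}(t))$ to $(x^\varepsilon_\gamma(t),u^\varepsilon(t))$ stays in the domain of $f_\gamma$, so that the quantity inside the modulus in \eqref{est-sup-theta-f-h-ep-six1} is the integral remainder of the order-two expansion of $x\mapsto h_\gamma(x)$ about $\bar{x}_\gamma(T)$ with increment $\delta x_\gamma(T)$, and --- writing $\zeta_\gamma(t):=\big(\delta x_\gamma(t),\varepsilon v(t)\big)$ --- the quantity inside the modulus in \eqref{est-sup-theta-f-h-ep-six2} is the integral remainder of the order-two expansion of $(x,u)\mapsto f_\gamma(t,x,u)$ about $(\bar{x}_\gamma(t),\bar{u}(t))$ with increment $\zeta_\gamma(t)$. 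Here one uses the symmetry of the mixed partials (from (H3)(ii)), so that $\tfrac12\langle\partial_{(x,u)^2}f_\gamma(t)\zeta_\gamma(t),\zeta_\gamma(t)\rangle$ unfolds exactly into the three listed quadratic terms, the cross term $\varepsilon\langle\partial_{xu}f_\gamma(t)\delta x_\gamma(t),v(t)\rangle$ acquiring coefficient $1$ because the two off-diagonal Hessian blocks coincide.

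Concretely, Taylor's formula with integral remainder writes the quantity inside the modulus in \eqref{est-sup-theta-f-h-ep-six1} as $\int_0^1(1-\theta)\big\langle\big[\partial_{xx}h_\gamma(\bar{x}_\gamma(T)+\theta\delta x_\gamma(T))-\partial_{xx}h_\gamma(\bar{x}_\gamma(T))\big]\delta x_\gamma(T),\delta x_\gamma(T)\big\rangle\,\mathrm{d}\theta$, whose modulus is at most $\tfrac{L}{2}|\delta x_\gamma(T)|^3$, pointwise in $\omega$ and uniformly in $\gamma$, by the Lipschitz continuity of $\partial_{xx}h_\gamma$ with a $\gamma$-independent constant $L$; taking $\mathbb{E}$ and applying Proposition~\ref{lem-ele-y1y2-delta-x-esti} with $\beta=3$ gives $\sup_{\gamma\in\Gamma}\mathbb{E}|\delta x_\gamma(T)|^3\le\sup_{\gamma\in\Gamma}\|\delta x_\gamma\|_{\infty,3}^3=O(\varepsilon^3)$, which is \eqref{est-sup-theta-f-h-ep-six1}. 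For the running cost the corresponding remainder is $R_\gamma(t):=\int_0^1(1-\theta)\big\langle\big[\partial_{(x,u)^2}f_\gamma(t,\bar{x}_\gamma(t)+\theta\delta x_\gamma(t),\bar{u}(t)+\theta\varepsilon v(t))-\partial_{(x,u)^2}f_\gamma(t,\bar{x}_\gamma(t),\bar{u}(t))\big]\zeta_\gamma(t),\zeta_\gamma(t)\big\rangle\,\mathrm{d}\theta$, and (H3)(iv) gives $|R_\gamma(t)|\le\tfrac{L}{2}\big(|\delta x_\gamma(t)|+\varepsilon|v(t)|\big)^3\lesssim|\delta x_\gamma(t)|^3+\varepsilon^3|v(t)|^3$; integrating over $[0,T]$, taking $\mathbb{E}$, bounding $\mathbb{E}\int_0^T|\delta x_\gamma(t)|^3\,\mathrm{d}t\le T\|\delta x_\gamma\|_{\infty,3}^3$, and invoking Proposition~\ref{lem-ele-y1y2-delta-x-esti} together with $\|v\|_{3,3}<\infty$ (admissibility $u(\cdot),\bar{u}(\cdot)\in\mathcal{U}^\beta$ with the standing exponent $\beta\ge3$) yields $\sup_{\gamma\in\Gamma}\mathbb{E}\int_0^T|R_\gamma(t)|\,\mathrm{d}t=O(\varepsilon^3)$, which is \eqref{est-sup-theta-f-h-ep-six2}.

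The analysis is essentially routine, and the points that genuinely require attention are bookkeeping ones. First, one must verify that the displayed quadratic terms really do reproduce the Hessian bilinear form on $(\delta x_\gamma,\varepsilon v)$ --- in particular the coefficient $1$, not $\tfrac12$, on the $\partial_{xu}$ cross term --- and keep track of the first-order terms $\langle\partial_x f_\gamma(t),\delta x_\gamma(t)\rangle+\varepsilon\langle\partial_u f_\gamma(t),v(t)\rangle$ separately. Second, every estimate must be taken uniformly in $\gamma$; this comes for free, since the Lipschitz constant in (H3) is $\gamma$-free and Proposition~\ref{lem-ele-y1y2-delta-x-esti} is already formulated with $\sup_{\gamma\in\Gamma}$, so that (H4) plays no role at this stage. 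Finally, I would stress that the stated order $O(\varepsilon^3)$ --- as opposed to the weaker $o(\varepsilon^2)$ that mere continuity of the Hessians would give --- hinges on the Lipschitz continuity of the second-order derivatives and on the $L^3$-integrability of $v$, and it is precisely this cubic control that the subsequent second-order expansion of $J(u^\varepsilon(\cdot))$ will absorb when the $o(\varepsilon^2)$ terms are discarded.
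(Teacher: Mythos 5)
Your proof is correct and is exactly the argument the paper has in mind: the paper's own proof of this proposition is a one-line remark that it ``directly follows from Assumption (H3) together with Proposition~\ref{lem-ele-y1y2-delta-x-esti}'', and your Taylor-with-integral-remainder computation, with the cross-term coefficient bookkeeping and the $\beta=3$ moment bounds, is the standard way to fill that in. The only caveat worth recording is that your estimate for \eqref{est-sup-theta-f-h-ep-six1} invokes Lipschitz continuity of $\partial_{xx}h_\gamma$, whereas (H3)(iv) as written only asserts $|\partial_{xx}h_\gamma(x)|\le L$; some such Lipschitz (or H\"older) condition on $\partial_{xx}h_\gamma$ is indeed needed for the claimed $O(\varepsilon^3)$ rate (boundedness alone yields only $o(\varepsilon^2)$), so you are using the hypothesis the paper evidently intends, by analogy with the condition it does impose on $\partial_{(x,u)^2}f_\gamma$, even though it is not literally listed.
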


\begin{proof}
It directly follows from the Assumption (H3), together with the estimations in Proposition~\ref{lem-ele-y1y2-delta-x-esti}.
\end{proof}

\begin{prop}\label{pr-su-co-li-0-co}
Let Assumptions \emph{(H3)} and \emph{(H4)} hold, then for $u\in \mathcal U^\beta , \beta \ge 4$, we have 
\begin{equation}\label{lem-cost-continuous-theta-theta-pron-descri}
\begin{aligned}
&\lim_{ \ell \to0} \sup_{ \mathsf{d} (\gamma,\gamma')\le \ell } \Big\{ \mathbb{E}\sup_{t\in[0,T]}\Big(\left|x_\gamma(t)-x_{\gamma'}(t)\right|^2 +\left|h_\gamma(x_\gamma(T))-h_{\gamma'}(x_{\gamma'}(T))\right|^2  \\
&\indent\indent\indent  +\left|y_{1,\gamma}(t)-y_{1,\gamma'}(t)\right|^2 +\Big|\int_t^T[f_\gamma(s,x_\gamma(s),u(s))-f_{\gamma'}(s,x_{\gamma'}(s),u(s))]\mathrm{d}s\Big|^2\Big) \Big\} =0.
\end{aligned}
\end{equation}
\end{prop}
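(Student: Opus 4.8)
The plan is to estimate all four quantities inside the braces by one common two-step mechanism. For each coefficient $\varphi_\gamma\in\{b_\gamma,\sigma_\gamma,f_\gamma,h_\gamma\}$ (and, for $y_{1,\gamma}$, their first derivatives), I split
\[\varphi_\gamma(t,x_\gamma,u)-\varphi_{\gamma'}(t,x_{\gamma'},u)=\big[\varphi_\gamma(t,x_\gamma,u)-\varphi_\gamma(t,x_{\gamma'},u)\big]+\big[\varphi_\gamma(t,x_{\gamma'},u)-\varphi_{\gamma'}(t,x_{\gamma'},u)\big],\]
handling the first (frozen $\gamma$, moved state) bracket by the Lipschitz/growth bounds of (H3) together with the a priori estimates \eqref{zgjg} and Proposition~\ref{lem-ele-y1y2-delta-x-esti}, and the second (frozen state, moved $\gamma$) bracket by the uniform modulus $\overline\omega_N$ of (H4) after localizing to $\{|x_{\gamma'}(t)|\le N\}$, the tail $\{|x_{\gamma'}(t)|>N\}$ being absorbed by \eqref{zgjg}. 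Throughout, I fix $N$, let $\mathsf{d}(\gamma,\gamma')\le\ell\to0$ first, and only afterwards let $N\to\infty$.

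First I would treat $x_\gamma-x_{\gamma'}$. Subtracting the two copies of \eqref{state-equa-gene-main} and applying the above split with $\varphi=b,\sigma$, the first bracket is $\le L|x_\gamma-x_{\gamma'}|$ by (H3)(iii), and the second is $\le\overline\omega_N(\mathsf{d}(\gamma,\gamma'))\mathbf{1}_{\{|x_{\gamma'}(t)|\le N\}}+2\|\varphi\|_\infty\mathbf{1}_{\{|x_{\gamma'}(t)|>N\}}$ by (H4) and the boundedness in (H3)(ii). A Burkholder--Davis--Gundy and Gr\"onwall argument then yields, for every $\beta$ with $u\in\mathcal U^\beta$,
\[\sup_{\mathsf{d}(\gamma,\gamma')\le\ell}\mathbb E\sup_{t\in[0,T]}|x_\gamma(t)-x_{\gamma'}(t)|^\beta\lesssim\overline\omega_N(\ell)^\beta+\sup_{\gamma'\in\Gamma}\int_0^T\mathbb P\big(|x_{\gamma'}(t)|>N\big)\,\dif t\lesssim\overline\omega_N(\ell)^\beta+N^{-\beta},\]
the last step by Chebyshev and \eqref{zgjg}; sending $\ell\to0$ and then $N\to\infty$ gives uniform $L^\beta$ convergence of $x_\gamma-x_{\gamma'}$, in particular for $\beta\ge4$. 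Next, $h_\gamma(x_\gamma(T))-h_{\gamma'}(x_{\gamma'}(T))$ is split as above: the first bracket is $\le L(1+|x_\gamma(T)|+|x_{\gamma'}(T)|)|x_\gamma(T)-x_{\gamma'}(T)|$ by the linear growth of $\partial_xh_\gamma$ in (H3)(iv), and H\"older's inequality with exponents $\tfrac{\beta}{\beta-2},\tfrac{\beta}{2}$ — here the hypothesis $\beta\ge4$ is exactly what makes $\tfrac{2\beta}{\beta-2}\le\beta$, so the growth factor is bounded via \eqref{zgjg} — combined with the $L^\beta$ bound just obtained makes it small; the second bracket is $\le\overline\omega_N(\ell)$ on $\{|x_{\gamma'}(T)|\le N\}$ and $\le2L(1+|x_{\gamma'}(T)|^2)$ on the complement, whose contribution tends to $0$ as $N\to\infty$ by the quadratic growth of $h_\gamma$ and \eqref{zgjg}. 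The running-cost term $\int_t^T[f_\gamma-f_{\gamma'}]\dif s$ is handled identically, with the linear growth of $\partial_xf_\gamma$ in place of $\partial_xh_\gamma$ and the $\beta$-integrability of $u$ (equivalently of $v=u-\bar u$) entering the H\"older estimates; note that (H4) covers $f_\gamma$ for all $u\in U$, so no truncation in $u$ is needed.

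For $y_{1,\gamma}-y_{1,\gamma'}$ I subtract the linear equations \eqref{variant-equat-first}: the difference solves a linear SDE with homogeneous coefficients $\partial_xb_\gamma(t),\partial_x\sigma_\gamma(t)$ (bounded by (H3)(ii), hence amenable to Gr\"onwall) and inhomogeneous forcing of the form $\big(\partial_x\varphi_\gamma(t)-\partial_x\varphi_{\gamma'}(t)\big)y_{1,\gamma'}(t)+\big(\partial_u\varphi_\gamma(t)-\partial_u\varphi_{\gamma'}(t)\big)v(t)$, $\varphi=b,\sigma$. Since these coefficients are evaluated along $(\bar x_\gamma(t),\bar u(t))$, I split $\partial\varphi_\gamma(t)-\partial\varphi_{\gamma'}(t)$ once more into a part $\lesssim|\bar x_\gamma(t)-\bar x_{\gamma'}(t)|$ (using boundedness of $\partial_{(x,u)^2}\varphi$), already controlled by the previous step, and a modulus/tail part treated by (H4) and \eqref{zgjg}; the moment bounds $\sup_{\gamma\in\Gamma}\|y_{1,\gamma}\|^\beta_{\infty,\beta}=O(1)$ from Proposition~\ref{lem-ele-y1y2-delta-x-esti} and the $\beta$-integrability of $v$ then bound the forcing, and Gr\"onwall closes the estimate.

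The main obstacle is bookkeeping of uniformity: every bound must be uniform over the pair $(\gamma,\gamma')$, which forces all Gr\"onwall/BDG constants to depend only on the global data $L,\|b\|_\infty,\|\sigma\|_\infty,\beta,T$ of (H1)--(H3) and never on $\gamma$ or $N$, and the localization must be arranged so that the tail terms on $\{|x_{\gamma'}|>N\}$ vanish as $N\to\infty$ \emph{uniformly in} $\gamma'$ — precisely where the uniform moment bound \eqref{zgjg} (and, for the quadratic-growth tails of $f_\gamma,h_\gamma$, a uniform integrability consequence of it) is indispensable. Once the order of limits ($\ell\to0$, then $N\to\infty$) is respected, the four estimates add up to \eqref{lem-cost-continuous-theta-theta-pron-descri}.
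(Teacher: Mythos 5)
Your proof is correct and follows essentially the same route as the paper: the same interpolation of each coefficient difference into a state-displacement part (controlled by the Lipschitz/growth bounds of (H3)) and a parameter-displacement part (controlled by the modulus $\overline{\omega}_N$ of (H4) after localizing to $\{|x|\le N\}$, with the tail absorbed by the uniform moment bound \eqref{zgjg}), combined with the standard SDE stability estimate and the order of limits $\ell\to0$ then $N\to\infty$. You simply spell out the three terms the paper dismisses as ``similar,'' and your remark that $\beta\ge4$ is what closes the H\"older step for the $h_\gamma$ and $f_\gamma$ terms is a correct and useful observation.
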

\begin{proof}
The results come from the integrable of $x_{\gamma}, x_{\gamma'}$, $y_{1,\gamma}, y_{1,\gamma'}$, $u$, and (H4). 
We sketch the proof of the first term for readers' convenience. 
The process $x_\gamma-x_{\gamma'}$ fulfills the following equation
\begin{equation}\label{smch}
    \left\{
    \begin{aligned}
    &\mathrm{d}(x_\gamma-x_{\gamma'})(t)=\big[\int_{0}^{1}\partial_x b_{\gamma'}(t,x_{\gamma'} + \theta (x_\gamma-x_{\gamma'}) ,u)\mathrm{d}\theta (x_\gamma-x_{\gamma'}) + \psi^b_{\gamma ,\gamma' }  \big]\mathrm{d}t
    \\  & \indent  \indent  \indent  \indent \quad 
        +\big[\int_{0}^{1}\partial_x \sigma_{\gamma'}(t,x_{\gamma'} + \theta (x_\gamma-x_{\gamma'}) ,u)\mathrm{d}\theta (x_\gamma-x_{\gamma'}) + \psi^\sigma_{\gamma ,\gamma' }  \big]\mathrm{d}W(t),    \quad   t\in[0,T],      \\
    &(x_\gamma-x_{\gamma'})(0)=0,
    \end{aligned}
    \right.
\end{equation}
where $\psi^\varphi_{\gamma ,\gamma' }=\varphi_\gamma (t,x_\gamma ,u) - \varphi_{\gamma'} (t,x_\gamma ,u)$ for $\varphi = b, \sigma $. 
Utilizing \cite[Theorem 3.4.3]{zhjf17}, it derives that 
\begin{equation}\label{zcgx}
    \bald 
    \mathbb{E}\sup_{t\in[0,T]}|(x_\gamma-x_{\gamma'})(t)|^2  
   \lesssim  \mathbb{E} \Big[ \Big(\int_{0}^{T} |\psi^b_{\gamma ,\gamma' }|\dif t\Big)^2 +\Big(\int_{0}^{T} |\psi^\sigma_{\gamma ,\gamma' }|^2\dif t\Big)\Big].   
    \eald
\end{equation} 
By (H3) and (H4), it holds that for any $\gamma', \gamma \in \Gamma  $, a.e.  a.s. 
\begin{align*}
    |\psi^b_{\gamma ,\gamma' }| & =|b_\gamma (t,x_\gamma ,u) - b_{\gamma'} (t,x_\gamma ,u)|
\\
& \le \overline{\omega}_N\left( \mathsf{d} (\gamma, \gamma')\right) 1_{\{| {x}_{\gamma}|\le N\}} 
+ |b_{\gamma}(t, {x}_{\gamma} , {u}  )-b_{\gamma'}(t, {x}_{\gamma} , {u}  )| 1_{\{| {x}_{\gamma}| > N\}}.
\end{align*}
Then for any fixed $N>0$, it yields 
\begin{align*}
    \lim_{ \ell \to0} \sup_{ \mathsf{d} (\gamma,\gamma')\le \ell }\mathbb{E} \Big(\int_{0}^{T} |\psi^b_{\gamma ,\gamma' }|\dif t\Big)^2 \lesssim \sup_{\gamma \in \Gamma }\mathbb{E}\Big(\int_{0}^{T} (1+|x_\gamma |)\cdot \frac{|x_\gamma |}{N} \dif t \Big)^2 . 
\end{align*}
Recalling \eqref{zgjg} and sending $N\to \infty$, we obtain 
$\lim_{ \ell \to0} \sup_{ \mathsf{d} (\gamma,\gamma')\le \ell }\mathbb{E} \big(\int_{0}^{T} |\psi^b_{\gamma ,\gamma' }|\dif t\big)^2=0$. 
Similarly, it can be deduced that 
$\lim_{ \ell \to0} \sup_{ \mathsf{d} (\gamma,\gamma')\le \ell } \mathbb{E}  \int_{0}^{T} |\psi^\sigma_{\gamma ,\gamma' }|^2\dif t =0$.
Then by \eqref{zcgx}, we derive 
\[\lim_{ \ell \to0} \sup_{ \mathsf{d} (\gamma,\gamma')\le \ell } \mathbb{E}\sup_{t\in[0,T]}|(x_\gamma-x_{\gamma'})(t)|^2 =0. \]

The estimates for the other three parts in \eqref{lem-cost-continuous-theta-theta-pron-descri} are similar (cf.  \eqref{enpcn1}-\eqref{enpcn4}). 
\end{proof}

To utilize the dual analysis methods to study the optimal control problems, we consider two adjoint equations parameterized by $\gamma\in\Gamma$:
\begin{equation}\label{adjoint-equa-first}
\left\{
\begin{aligned}
\mathrm{d}P_{1,\gamma}(t)&=-\Big[\partial_x b_\gamma(t)^\top P_{1,\gamma}(t) +\partial_x \sigma_\gamma(t)^\top Q_{1,\gamma}(t)-\partial_x f_\gamma(t)\Big]\mathrm{d}t     +Q_{1,\gamma}(t)\mathrm{d}W(t),    \quad   t\in[0,T],       \\
P_{1,\gamma}(T)&=-\partial_x h_\gamma(\bar{x}_\gamma(T)),
\end{aligned}
\right.
\end{equation}
and
\begin{equation}\label{adjoint-equa-second}
\left\{
\begin{aligned}
&\mathrm{d}P_{2,\gamma}(t)=-\Big[\partial_x b_\gamma(t)^\top P_{2,\gamma}(t)
+P_{2,\gamma}(t)\partial_x b_\gamma(t)
+\partial_x \sigma_\gamma(t)^\top P_{2,\gamma}(t)\partial_x \sigma_\gamma(t)  +\partial_x \sigma_\gamma(t)^\top Q_{2,\gamma}(t)  \\
&\indent\indent\indent\quad   
+Q_{2,\gamma}(t)\partial_x \sigma_\gamma(t)
+\partial_{xx}H_{\gamma}(t)\Big]\mathrm{d}t    +Q_{2,\gamma}(t)\mathrm{d}W(t),    \quad   t\in[0,T],       \\
&P_{2,\gamma}(T)=-\partial_{xx} h_\gamma(\bar{x}_\gamma(T)),
\end{aligned}
\right.
\end{equation}
where $\partial_{xx}H_{\gamma}(t)=\partial_{xx}H_{\gamma} (t,\bar{x}_\gamma(t),\bar{u}(t),P_{1,\gamma}(t),Q_{1,\gamma}(t))$, and $ H_{\gamma}$ is   the Hamiltonian   defined by 
\begin{equation}\label{hamiltonian}
  H_{\gamma}(t,x_\gamma,u,p_\gamma,q_\gamma):=\langle p_\gamma,b_\gamma(t,x_\gamma,u)\rangle +\langle q_\gamma,\sigma_\gamma(t,x_\gamma,u)\rangle  -f_\gamma(t,x_\gamma,u), \quad  \gamma\in\Gamma,
\end{equation}
where $(t,x_\gamma,u,p_\gamma,q_\gamma)\in[0,T]\times\mathbb{R}^n\times U\times\mathbb{R}^n\times\mathbb{R}^n$.

\begin{prop}
Under   Assumptions \emph{(H1)} and \emph{(H3)},  for any $\beta\in[1,\infty)$, $\bar{u}(\cdot)\in \mathcal{U}^\beta$, there uniquely exists \\ \indent
$(P_{1,\gamma}(\cdot),Q_{1,\gamma}(\cdot))$ $\in$ $ L_\mathbb{F}^\beta (\Omega; C([0,T]; \mathbb{R}^n)) $ $\times $ $L_\mathbb{F}^\beta (\Omega; L^2(0,T; \mathbb{R}^n))$, \\ \indent
$(P_{2,\gamma}(\cdot),Q_{2,\gamma}(\cdot))$ $\in $ $L_\mathbb{F}^\beta (\Omega; C([0,T]; \mathbf{S}^n)) $  $\times$ $L_\mathbb{F}^\beta(\Omega;L^2(0,T;\mathbf{S}^n)),$  \\
satisfying \eqref{adjoint-equa-first} and \eqref{adjoint-equa-second} separatly, 
and 
\begin{equation}\label{aeqpq1}
\begin{aligned}
\sup_{\gamma\in\Gamma} \mathbb{E}\Big(\sup_{t\in[0,T]}\left|P_{1,\gamma}(t) \right|^\beta  +\sup_{t\in[0,T]}\left|P_{2,\gamma}(t) \right|^\beta \Big) <\infty,
\end{aligned}
\end{equation}
\begin{equation}\label{aeqpq2}
  \begin{aligned}
    \sup_{\gamma\in\Gamma} \mathbb{E}\Big(\int_0^T\left|Q_{1,\gamma}(t) \right|^2 \mathrm{d}t \Big)^\frac{\beta }{2} + \sup_{\gamma\in\Gamma} \mathbb{E}\Big(\int_0^T\left|Q_{2,\gamma}(t)  \right|^2 \mathrm{d}t\Big)^\frac{\beta }{2}<\infty.
  \end{aligned}
  \end{equation}
Besides,  given Assumptions  \emph{(H1)}, \emph{(H3)} and \emph{(H4)},  
\begin{equation}\label{par-cont-p1p2-de}
\begin{aligned}
\lim_{ \ell \to0} \sup_{ \mathsf{d} (\gamma,\gamma')\le \ell } \Big[\mathbb{E}\Big(\sup_{t\in[0,T]}\left|P_{1,\gamma}(t)-P_{1,\gamma'}(t)\right|^2 +\sup_{t\in[0,T]}\left|P_{2,\gamma}(t)-P_{2,\gamma'}(t)\right|^2\Big)\Big]=0
\end{aligned}
\end{equation}
and
\begin{equation}\label{par-cont-q1q2-de}
\begin{aligned}
\lim_{ \ell \to0} \sup_{ \mathsf{d} (\gamma,\gamma')\le \ell } \Big[\mathbb{E}\int_0^T\Big(\left|Q_{1,\gamma}(t)-Q_{1,\gamma'}(t)\right|^2 + \left|Q_{2,\gamma}(t)-Q_{2,\gamma'}(t)\right|^2 \Big)\mathrm{d}t\Big]=0.
\end{aligned}
\end{equation}
\end{prop}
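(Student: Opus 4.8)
\emph{Strategy.} All three tasks — well-posedness, the uniform bounds \eqref{aeqpq1}--\eqref{aeqpq2}, and the $\gamma$-continuity \eqref{par-cont-p1p2-de}--\eqref{par-cont-q1q2-de} — reduce to the $L^\beta$ theory of linear backward SDEs; the plan is to carry out those reductions while tracking which constants are $\gamma$-independent. For well-posedness, note that \eqref{adjoint-equa-first} and \eqref{adjoint-equa-second} are linear BSDEs whose coefficients in $(P,Q)$ are $\partial_x b_\gamma(t),\partial_x\sigma_\gamma(t)$, bounded uniformly by the constant $L$ of (H3)(ii). For \eqref{adjoint-equa-first} the data are the terminal value $-\partial_x h_\gamma(\bar{x}_\gamma(T))$ and the source $\partial_x f_\gamma(t)$; by the growth bounds in (H3)(iv), \eqref{zgjg} and $\bar{u}\in\mathcal{U}^\beta$ these lie in $L^\beta_{\mathcal{F}_T}(\Omega;\mathbb{R}^n)$ and $L^\beta_{\mathbb{F}}(\Omega;L^1(0,T;\mathbb{R}^n))$, so the standard $L^\beta$ theory of linear BSDEs (cf.\ \cite{yongjiongminzhouxunyu1999book,zhjf17}) produces a unique $(P_{1,\gamma},Q_{1,\gamma})$ in the stated spaces. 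From \eqref{hamiltonian}, $\partial_{xx}H_\gamma(t)=P_{1,\gamma}(t)^\top\partial_{xx}b_\gamma(t)+Q_{1,\gamma}(t)^\top\partial_{xx}\sigma_\gamma(t)-\partial_{xx}f_\gamma(t)$; since $\partial_{xx}b_\gamma,\partial_{xx}\sigma_\gamma$ are bounded by (H3)(ii) and $\partial_{xx}f_\gamma$ by (H3)(iv), Cauchy--Schwarz in $t$ shows this is admissible $L^\beta(\Omega;L^1)$ source data for the matrix-valued BSDE \eqref{adjoint-equa-second}, whose terminal value $-\partial_{xx}h_\gamma(\bar{x}_\gamma(T))$ is bounded; hence a unique solution exists, and because both data are symmetric and the generator of \eqref{adjoint-equa-second} maps symmetric $(P,Q)$ to symmetric outputs, uniqueness forces $(P_{2,\gamma},Q_{2,\gamma})$ to be $\mathbf{S}^n$-valued.

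\emph{Uniform bounds.} Since $\partial_x b_\gamma,\partial_x\sigma_\gamma$ are bounded by the single $\gamma$-independent $L$, the $L^\beta$ a priori estimate for linear BSDEs yields a constant depending only on $T,L,\beta$ with
\[
\mathbb{E}\Big[\sup_t|P_{1,\gamma}(t)|^\beta+\Big(\int_0^T|Q_{1,\gamma}(t)|^2\dif t\Big)^{\beta/2}\Big]\lesssim\mathbb{E}\Big[|\partial_x h_\gamma(\bar{x}_\gamma(T))|^\beta+\Big(\int_0^T|\partial_x f_\gamma(t)|\dif t\Big)^\beta\Big],
\]
and by (H3)(iv) the right-hand side is $\lesssim 1+\mathbb{E}\sup_t|\bar{x}_\gamma(t)|^\beta+\mathbb{E}\int_0^T|\bar{u}(t)|^\beta\dif t$, finite uniformly in $\gamma$ by \eqref{zgjg}; taking $\sup_\gamma$ gives the $P_1,Q_1$ parts of \eqref{aeqpq1}--\eqref{aeqpq2}. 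Applying the same estimate to \eqref{adjoint-equa-second}, using $|\partial_{xx}h_\gamma|\le L$ and $|\partial_{xx}H_\gamma(t)|\lesssim 1+|P_{1,\gamma}(t)|+|Q_{1,\gamma}(t)|$, the right-hand side is controlled via Cauchy--Schwarz by $1+\mathbb{E}\sup_t|P_{1,\gamma}(t)|^\beta+\mathbb{E}(\int_0^T|Q_{1,\gamma}(t)|^2\dif t)^{\beta/2}$, already uniformly bounded; this proves \eqref{aeqpq1}--\eqref{aeqpq2}.

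\emph{Continuity of the first-order pair.} Fix $\beta\ge 4$ (allowed, the bounds above holding for every $\beta$). Subtracting the two copies of \eqref{adjoint-equa-first} at $\gamma,\gamma'$, the difference $(\Delta P_1,\Delta Q_1):=(P_{1,\gamma}-P_{1,\gamma'},Q_{1,\gamma}-Q_{1,\gamma'})$ solves a linear BSDE with the $\gamma$-coefficients, terminal value $-(\partial_x h_\gamma(\bar{x}_\gamma(T))-\partial_x h_{\gamma'}(\bar{x}_{\gamma'}(T)))$, and source $\mathcal{I}_1(t)=(\partial_x b_\gamma(t)-\partial_x b_{\gamma'}(t))^\top P_{1,\gamma'}(t)+(\partial_x\sigma_\gamma(t)-\partial_x\sigma_{\gamma'}(t))^\top Q_{1,\gamma'}(t)-(\partial_x f_\gamma(t)-\partial_x f_{\gamma'}(t))$, so the linear BSDE estimate bounds $\mathbb{E}\sup_t|\Delta P_1(t)|^2+\mathbb{E}\int_0^T|\Delta Q_1(t)|^2\dif t$ by $\mathbb{E}|\Delta P_1(T)|^2+\mathbb{E}(\int_0^T|\mathcal{I}_1(t)|\dif t)^2$. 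Each coefficient-difference, e.g.\ $\partial_x b_\gamma(t,\bar{x}_\gamma,\bar{u})-\partial_x b_{\gamma'}(t,\bar{x}_{\gamma'},\bar{u})$, splits into a $\gamma$-part $\partial_x b_\gamma(t,\bar{x}_\gamma,\bar{u})-\partial_x b_{\gamma'}(t,\bar{x}_\gamma,\bar{u})$, which on $\{|\bar{x}_\gamma(t)|\le N\}$ is $\le\overline{\omega}_N(\mathsf{d}(\gamma,\gamma'))$ by (H4) and off it is $\lesssim\mathbf{1}_{\{|\bar{x}_\gamma(t)|>N\}}$ by (H3)(ii), plus a state-part $\partial_x b_{\gamma'}(t,\bar{x}_\gamma,\bar{u})-\partial_x b_{\gamma'}(t,\bar{x}_{\gamma'},\bar{u})$, which is $\lesssim|\bar{x}_\gamma(t)-\bar{x}_{\gamma'}(t)|$ as $\partial_{xx}b$ is bounded; the $\sigma$-, $f$- and terminal differences split the same way, the latter using the $x$-regularity of $h_\gamma$ and its derivatives from (H3) together with (H4). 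Multiplying by $P_{1,\gamma'}$ or $Q_{1,\gamma'}$, applying H\"older and Cauchy--Schwarz in $t$ and $\omega$, and invoking \eqref{zgjg}, \eqref{aeqpq1}--\eqref{aeqpq2} with $\beta\ge 4$ (to absorb $\mathbb{E}(\int_0^T|Q_{1,\gamma'}|^2\dif t)^2$) and $\sup_{\mathsf{d}(\gamma,\gamma')\le\ell}\mathbb{E}\sup_t|\bar{x}_\gamma(t)-\bar{x}_{\gamma'}(t)|^2\to 0$ from Proposition~\ref{pr-su-co-li-0-co}, one lets $\ell\to 0$ and then $N\to\infty$ to obtain \eqref{par-cont-p1p2-de}--\eqref{par-cont-q1q2-de} for the first-order pair.

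\emph{Continuity of the second-order pair, and the main obstacle.} The difference $(\Delta P_2,\Delta Q_2):=(P_{2,\gamma}-P_{2,\gamma'},Q_{2,\gamma}-Q_{2,\gamma'})$ solves the analogous linear BSDE; its source contains coefficient-differences $\partial_x b_\gamma-\partial_x b_{\gamma'}$, $\partial_x\sigma_\gamma-\partial_x\sigma_{\gamma'}$ tested against $P_{2,\gamma'},Q_{2,\gamma'}$ — treated exactly as the $\mathcal{I}_1$-terms, now using \eqref{aeqpq1}--\eqref{aeqpq2} for $P_2,Q_2$ — together with the term $\partial_{xx}H_\gamma(t)-\partial_{xx}H_{\gamma'}(t)$, which by the formula $\partial_{xx}H_\gamma=P_{1,\gamma}^\top\partial_{xx}b_\gamma+Q_{1,\gamma}^\top\partial_{xx}\sigma_\gamma-\partial_{xx}f_\gamma$ and adding/subtracting decomposes into bounded coefficients times $\Delta P_1,\Delta Q_1$ — controlled by the first-order continuity just proved — and $P_{1,\gamma'},Q_{1,\gamma'}$ times $\partial_{xx}b_\gamma-\partial_{xx}b_{\gamma'}$, $\partial_{xx}\sigma_\gamma-\partial_{xx}\sigma_{\gamma'}$, $\partial_{xx}f_\gamma-\partial_{xx}f_{\gamma'}$ — controlled by (H4) and the Lipschitz bounds of (H3)(iii)--(iv). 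Running the linear BSDE estimate once more yields \eqref{par-cont-p1p2-de}--\eqref{par-cont-q1q2-de} for the second-order pair. This last step is the main obstacle: because the source $\partial_{xx}H_\gamma$ of the second-order adjoint is itself built from $(P_{1,\gamma},Q_{1,\gamma})$, its $\gamma$-continuity must be bootstrapped from that of the first-order pair, and throughout one must cope with the merely $L^2$-in-time integrability of $Q_{1,\gamma},Q_{2,\gamma}$ — which is precisely why the higher-moment ($\beta\ge 4$) versions of \eqref{aeqpq1}--\eqref{aeqpq2} and the level-set truncations are needed, so that the contributions off $\{|\bar{x}_\gamma|\le N\}$ vanish uniformly in $\gamma$.
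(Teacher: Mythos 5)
Your proposal is correct and follows essentially the same route as the paper: reduce both adjoint equations to the standard $L^\beta$ theory of linear BSDEs with $\gamma$-uniformly bounded coefficients, obtain \eqref{aeqpq1}--\eqref{aeqpq2} from the a priori estimate with constants depending only on $T,L,\beta$, and prove \eqref{par-cont-p1p2-de}--\eqref{par-cont-q1q2-de} by applying the stability estimate to the difference BSDE and splitting each coefficient difference into a $\gamma$-part handled by (H4) on the level set $\{|\bar x_\gamma|\le N\}$ (with the tail killed by \eqref{zgjg} as $N\to\infty$) and a state part handled by the Lipschitz bounds of (H3) together with Proposition~\ref{pr-su-co-li-0-co}. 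If anything you are more complete than the paper's own sketch, which treats only the first-order pair in detail and whose displayed stability bound \eqref{pcPQ1m} records only the terminal and $\partial_x f$ differences, whereas you correctly keep the source terms $(\partial_x b_\gamma-\partial_x b_{\gamma'})^\top P_{1,\gamma'}+(\partial_x\sigma_\gamma-\partial_x\sigma_{\gamma'})^\top Q_{1,\gamma'}$ (whence your $\beta\ge 4$ requirement) and spell out the bootstrap through $\partial_{xx}H_\gamma$ needed for the second-order pair.
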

\begin{proof}
The existence and uniqueness for \eqref{adjoint-equa-first} and \eqref{adjoint-equa-second}, and the proof of \eqref{aeqpq1} and \eqref{aeqpq2}, all of which can be found in \cite{kpq97}, by the Burkholder-Davis-Gundy inequality, H\"{o}lder's inequality, and others.  
We only give the detailed proof about $(P_{1,\gamma}(\cdot),Q_{1,\gamma}(\cdot))$. That for $(P_{2,\gamma}(\cdot),Q_{2,\gamma}(\cdot))$ are similar. 

By \cite[Theorem 4.4.4]{zhjf17}, for $\beta \ge 2$, it can be shown that
    \begin{align}
\sup_{\gamma\in\Gamma} \mathbb{E} \sup_{t\in[0,T]}\left|P_{1,\gamma}(t) \right|^\beta  + \sup_{\gamma\in\Gamma} \mathbb{E}\Big(\int_0^T\left|Q_{1,\gamma}(t) \right|^2 \mathrm{d}t \Big)^\frac{\beta }{2}   \notag 
&
   \lesssim  \E |\! -\partial_x h_\gamma(\bar{x}_\gamma(T))|^\beta + \E \Big(\int_{0}^{T}|\! -\partial_x f_\gamma(t)| \dif t\Big)^\beta
   \\  & 
   <\infty . 
\end{align}
Moreover, it holds that 
\begin{equation}\label{pcPQ1m}
    \bald
    &  \mathbb{E} \sup_{t\in[0,T]}\left|P_{1,\gamma}(t) - P_{1,\gamma'}(t) \right|^\beta  +   \mathbb{E}\Big(\int_0^T\left|Q_{1,\gamma}(t)-Q_{1,\gamma'}(t) \right|^2 \mathrm{d}t \Big)^\frac{\beta }{2}
   \\  & \indent
   \lesssim  \E |\partial_x h_{\gamma'}(\bar{x}_{\gamma'}(T))-\partial_x h_\gamma(\bar{x}_\gamma(T))|^\beta + \E \Big(\int_{0}^{T}|\partial_x f_{\gamma'}(t)-\partial_x f_\gamma(t)| \dif t\Big)^\beta. 
    \eald
\end{equation}
For the last term, by Assumptions (H3) and (H4), interpolating one term  $\partial_x f_\gamma(t,\overline{x}_{\gamma'} ,\overline{u})$, it follows that for any $\gamma', \gamma \in \Gamma  $, a.e.  a.s. 
\begin{equation}\label{enpcn1}
\bald 
&  |\partial_x f_{\gamma'}(t)-\partial_x f_\gamma(t)| 
\\  & \indent 
\le |\partial_x f_{\gamma}(t,\overline{x}_\gamma ,\overline{u}  )-\partial_x f_\gamma(t,\overline{x}_{\gamma'} ,\overline{u})| 
+ |\partial_x f_{\gamma'}(t,\overline{x}_{\gamma'} ,\overline{u}  )-\partial_x f_\gamma(t,\overline{x}_{\gamma'} ,\overline{u})|  
\\  & \indent 
\lesssim |\overline{x}_{\gamma}-\overline{x}_{\gamma'}| 
+ \overline{\omega}_N\left( \mathsf{d} (\gamma, \gamma')\right) 1_{\{|\overline{x}_{\gamma'}|\le N\}} 
+ |\partial_x f_{\gamma}(t,\overline{x}_{\gamma'} ,\overline{u}  )-\partial_x f_{\gamma'}(t,\overline{x}_{\gamma'} ,\overline{u}  )| 1_{\{|\overline{x}_{\gamma'}| > N\}}. 
\eald 
\end{equation}
Then it further derives 
\begin{equation}\label{enpcn2}
    \bald 
    \E \Big(\int_{0}^{T}|\partial_x f_{\gamma'}(t)-\partial_x f_\gamma(t)| \dif t\Big)^\beta 
  &  
   \lesssim \E \Big(\int_{0}^{T} |\overline{x}_{\gamma}-\overline{x}_{\gamma'}| \dif t\Big)^\beta  
   +\E \Big(\int_{0}^{T}\overline{\omega}^\beta_N\left( \mathsf{d} (\gamma, \gamma')\right) 1_{\{|\overline{x}_{\gamma'}|\le N\}} \dif t\Big)      \\  & \quad 
   + \E \Big(\int_{0}^{T}  (1+|\overline{x}_{\gamma'}|+|\bar u|)\cdot \frac{|\overline{x}_{\gamma'}|}{N}  \dif t\Big)^\beta .
    \eald 
\end{equation}
For any fixed  $N>0$, it can be concluded that 
\begin{equation}\label{enpcn3}
    \bald 
   & \lim_{ \ell \to0} \sup_{ \mathsf{d} (\gamma,\gamma')\le \ell } \E \Big(\int_{0}^{T}|\partial_x f_{\gamma'}(t)-\partial_x f_\gamma(t)| \dif t\Big)^\beta  
   \lesssim  \sup_{\gamma'\in\Gamma} \Big[\E \Big(1+ \sup_{t\in[0,T]}  |\overline{x}_{\gamma'}|^{2\beta}+\int_{0}^{T}|\bar u|^{2\beta} \dif t\Big)\Big]  N^{-\beta }.
    \eald 
\end{equation}
Letting $N\to \infty$, we see 
\begin{equation}\label{enpcn4}
    \bald 
   & \lim_{ \ell \to0} \sup_{ \mathsf{d} (\gamma,\gamma')\le \ell } \E \Big(\int_{0}^{T}|\partial_x f_{\gamma'}(t)-\partial_x f_\gamma(t)| \dif t\Big)^\beta  =0. 
    \eald 
\end{equation}
Similarly,  
\begin{equation*}
    \bald 
   &  \lim_{ \ell \to0} \sup_{ \mathsf{d} (\gamma,\gamma')\le \ell } \E |\partial_x h_{\gamma'}(\bar{x}_{\gamma'}(T))-\partial_x h_\gamma(\bar{x}_\gamma(T))|^\beta =0 . 
    \eald 
\end{equation*}
Recalling \eqref{pcPQ1m}, we obtain 
\begin{equation*}
    \bald 
   &  \lim_{ \ell \to0} \sup_{ \mathsf{d} (\gamma,\gamma')\le \ell } \Big\{ \mathbb{E} \sup_{t\in[0,T]}\left|P_{1,\gamma}(t) - P_{1,\gamma'}(t) \right|^2  +  \mathbb{E}\Big(\int_0^T\left|Q_{1,\gamma}(t)-Q_{1,\gamma'}(t) \right|^2 \mathrm{d}t \Big) \Big\}=0.
    \eald 
\end{equation*}
\end{proof}

Denote 
\begin{equation}\label{defi-S}
\begin{aligned}
& \mathbb{S}_\gamma(t,x_\gamma,u,p_{1,\gamma},q_{1,\gamma},p_{2,\gamma},q_{2,\gamma}):=  \partial_{xu}H_{\gamma}(t,x_\gamma,u,p_{1,\gamma},q_{1,\gamma}) +\partial_u b_\gamma(t,x_\gamma,u)^\top p_{2,\gamma}
     \\
&\indent\indent\indent\indent\indent\indent +\partial_u \sigma_\gamma(t,x_\gamma,u)^\top q_{2,\gamma}    +\partial_u \sigma_\gamma(t,x_\gamma,u)^\top p_{2,\gamma}\partial_x \sigma_\gamma(t,x_\gamma,u),
\end{aligned}
\end{equation}
where $(t,x_\gamma,u,p_{1,\gamma},q_{1,\gamma},p_{2,\gamma},q_{2,\gamma})\in[0,T]\times \mathbb{R}^n\times U\times \mathbb{R}^n\times \mathbb{R}^n\times\mathbf{S}^n\times\mathbf{S}^n$.
And denote
\begin{equation}\label{defi-St}
\mathbb{S}_\gamma(t):=\mathbb{S}_\gamma(t,\bar{x}_\gamma(t),\bar{u}(t), P_{1,\gamma}(t),Q_{1,\gamma}(t),P_{2,\gamma}(t),Q_{2,\gamma}(t)),\quad  t\in[0,T],
\end{equation}
of which the following properties are needed later. 

\begin{prop}\label{pr-su-co-listhet}
  Provided with Assumptions  \emph{(H1)}, \emph{(H3)} and \emph{(H4)},  and $\bar{u}(\cdot)\in \mathcal{U}^\beta$, $\beta \ge 2$, we have 
  \begin{equation}\label{essthete}
    \begin{aligned}
     \sup_{\gamma\in\Gamma} \mathbb{E}\int_{0}^{T} |\mathbb{S}_\gamma(t)|^2 \mathrm{d}t <\infty.
\end{aligned}
\end{equation}
Besides,
\begin{equation}\label{lethetheprocontisup}
  \begin{aligned}
  &\lim_{\ell \to0} \sup_{ \mathsf{d} (\gamma,\gamma')\le\ell }  \mathbb{E}\int_{0}^{T} \left|\mathbb{S}_\gamma(t)-\mathbb{S}_{\gamma'}(t)\right|^{2-}\mathrm{d}t =0.
\end{aligned}
\end{equation}
\end{prop}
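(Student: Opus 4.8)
The plan is to establish the two assertions of Proposition \ref{pr-su-co-listhet} by unwinding the definition \eqref{defi-S}--\eqref{defi-St} of $\mathbb{S}_\gamma(t)$ into its constituent pieces and then applying the regularity estimates already in hand. Recall that
\[
\mathbb{S}_\gamma(t)=\partial_{xu}H_\gamma(t)+\partial_u b_\gamma(t)^\top P_{2,\gamma}(t)+\partial_u\sigma_\gamma(t)^\top Q_{2,\gamma}(t)+\partial_u\sigma_\gamma(t)^\top P_{2,\gamma}(t)\partial_x\sigma_\gamma(t),
\]
where $\partial_{xu}H_\gamma(t)=\partial_{xu}b_\gamma(t)^\top P_{1,\gamma}(t)+\partial_{xu}\sigma_\gamma(t)^\top Q_{1,\gamma}(t)-\partial_{xu}f_\gamma(t)$ (evaluated along the optimal pair). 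For the bound \eqref{essthete}, I would estimate each summand in $L^2([0,T]\times\Omega)$ uniformly in $\gamma$. By (H3)(ii)--(iii), the factors $\partial_u b_\gamma$, $\partial_u\sigma_\gamma$, $\partial_x\sigma_\gamma$, $\partial_{xu}b_\gamma$, $\partial_{xu}\sigma_\gamma$ are uniformly bounded, and $\partial_{xu}f_\gamma$ is uniformly bounded by (H4)(iv); the potentially unbounded pieces are $P_{1,\gamma},Q_{1,\gamma},P_{2,\gamma},Q_{2,\gamma}$, which are controlled by \eqref{aeqpq1}--\eqref{aeqpq2}. Thus, using $|\partial_u b_\gamma(t)^\top P_{2,\gamma}(t)|\lesssim|P_{2,\gamma}(t)|$, Cauchy--Schwarz, and boundedness of $\partial_x\sigma_\gamma$ for the quadratic term $\partial_u\sigma_\gamma^\top P_{2,\gamma}\partial_x\sigma_\gamma$, one gets
\[
\sup_{\gamma\in\Gamma}\mathbb{E}\int_0^T|\mathbb{S}_\gamma(t)|^2\dif t\lesssim \sup_{\gamma\in\Gamma}\mathbb{E}\Big[\sup_{t\in[0,T]}\big(|P_{1,\gamma}(t)|^2+|P_{2,\gamma}(t)|^2\big)+\int_0^T\big(|Q_{1,\gamma}(t)|^2+|Q_{2,\gamma}(t)|^2\big)\dif t+1\Big],
\]
which is finite (taking $\beta=2$ in the preceding proposition). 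This step is routine.

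For the continuity assertion \eqref{lethetheprocontisup} — where $|\cdot|^{2-}$ denotes some exponent strictly less than $2$, say $|\cdot|^{2-\delta}$ for small $\delta>0$ — the strategy is again to handle $\mathbb{S}_\gamma(t)-\mathbb{S}_{\gamma'}(t)$ term by term. For a generic product $A_\gamma(t)B_\gamma(t)$ appearing in $\mathbb{S}_\gamma$, I write the standard telescoping decomposition $A_\gamma B_\gamma-A_{\gamma'}B_{\gamma'}=(A_\gamma-A_{\gamma'})B_\gamma+A_{\gamma'}(B_\gamma-B_{\gamma'})$. The differences of the "coefficient" factors such as $\partial_u b_\gamma(t)-\partial_u b_{\gamma'}(t)$, $\partial_x\sigma_\gamma(t)-\partial_x\sigma_{\gamma'}(t)$, $\partial_{xu}f_\gamma(t)-\partial_{xu}f_{\gamma'}(t)$ are handled exactly as in the proof of \eqref{par-cont-p1p2-de}: interpolate a term with mismatched state argument, use the local uniform continuity in $\gamma$ from (H4) on $\{|\bar x_{\gamma'}|\le N\}$, use the Lipschitz-in-$(x,u)$ estimates from (H3)(iii)–(iv) together with the state continuity \eqref{lem-cost-continuous-theta-theta-pron-descri}, and absorb the tail $\{|\bar x_{\gamma'}|>N\}$ via the uniform moment bound \eqref{zgjg} before sending $N\to\infty$. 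The differences of the adjoint factors $P_{i,\gamma}-P_{i,\gamma'}$, $Q_{i,\gamma}-Q_{i,\gamma'}$ are controlled by \eqref{par-cont-p1p2-de}--\eqref{par-cont-q1q2-de}.

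The main obstacle is the integrability bookkeeping: in each telescoped product one factor is only a \emph{difference} of adjoint processes (controlled in $L^2$ uniformly in $\gamma'\!,\gamma$, with vanishing supremum over small $\mathsf d(\gamma,\gamma')$), while the other factor — e.g. $\partial_{xu}\sigma_\gamma(t)^\top(Q_{1,\gamma}(t)-Q_{1,\gamma'}(t))$ paired against a bounded coefficient, or $\partial_u\sigma_{\gamma'}^\top P_{2,\gamma'}\big(\partial_x\sigma_\gamma-\partial_x\sigma_{\gamma'}\big)$ where $P_{2,\gamma'}$ is only in $L^\beta$ — forces the use of H\"older's inequality with a conjugate pair that lands the product in $L^{2-\delta}$ rather than $L^2$. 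Concretely, for a term like $A_{\gamma'}(t)(B_\gamma(t)-B_{\gamma'}(t))$ with $B_\gamma-B_{\gamma'}\to0$ in $L^2$ and $A_{\gamma'}\in L^{\beta}$ for all $\beta$, one picks $p,q$ with $\frac1p+\frac1q=\frac1{2-\delta}$, $q<\infty$, $p$ large, applies $\|A_{\gamma'}(B_\gamma-B_{\gamma'})\|_{2-\delta}\le\|A_{\gamma'}\|_{p}\,\|B_\gamma-B_{\gamma'}\|_{q}$, and then interpolates $\|B_\gamma-B_{\gamma'}\|_q$ between the uniform $L^{\beta}$-bound (for $\beta>q$) and the $L^2$-convergence; this is precisely why the exponent must be degraded from $2$ to $2-\delta$, and it is the only place where one cannot keep the natural square-integrability. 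Once this H\"older/interpolation step is set up uniformly in $\mathsf d(\gamma,\gamma')\le\ell$, letting $\ell\to0^+$ (after $N\to\infty$) yields \eqref{lethetheprocontisup}, completing the proof. \eof
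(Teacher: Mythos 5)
Your proof of \eqref{essthete} matches the paper's: both reduce $|\mathbb{S}_\gamma(t)|^2$ to $1+|P_{1,\gamma}|^2+|P_{2,\gamma}|^2+|Q_{1,\gamma}|^2+|Q_{2,\gamma}|^2$ via the uniform boundedness of the coefficient derivatives and then invoke \eqref{aeqpq1}--\eqref{aeqpq2}. For \eqref{lethetheprocontisup} your route is organized differently. The paper first proves the $L^1$ statement \eqref{lgxzy}, i.e. $\lim_{\ell\to0}\sup_{\mathsf d(\gamma,\gamma')\le\ell}\mathbb{E}\int_0^T|\mathbb{S}_\gamma-\mathbb{S}_{\gamma'}|\,\mathrm{d}t=0$, by the telescoping-plus-truncation procedure of \eqref{enpcn1}--\eqref{enpcn4} (here Cauchy--Schwarz suffices because every factor is at least square-integrable), and then obtains the $L^{2-\delta}$ statement in one stroke by the global interpolation $\mathbb{E}\int_0^T|X|^{2-\delta}\,\mathrm{d}t\le\big(\mathbb{E}\int_0^T|X|\,\mathrm{d}t\big)^{\delta}\big(\mathbb{E}\int_0^T|X|^2\,\mathrm{d}t\big)^{1-\delta}$ against the uniform bound \eqref{essthete}. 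You instead apply H\"older term by term inside the telescoped sum so as to land each product directly in $L^{2-\delta}$. Both arguments are valid and rest on the same ingredients (\eqref{par-cont-p1p2-de}--\eqref{par-cont-q1q2-de}, (H3), (H4), \eqref{lem-cost-continuous-theta-theta-pron-descri}); the paper's global interpolation buys you freedom from per-term exponent bookkeeping, which is exactly where your write-up is slightly off. The two examples you single out as forcing the degradation do not in fact force it: $\partial_{xu}\sigma_\gamma^\top(Q_{1,\gamma}-Q_{1,\gamma'})$ converges in $L^2$ outright by \eqref{par-cont-q1q2-de}, and $\partial_u\sigma_{\gamma'}^\top P_{2,\gamma'}(\partial_x\sigma_\gamma-\partial_x\sigma_{\gamma'})$ also converges in $L^2$ since $P_{2,\gamma'}\in L^\beta(\Omega;C([0,T]))$ for every $\beta$ while the bounded coefficient difference converges in every $L^q$, $q<\infty$. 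The terms that genuinely cannot be kept in $L^2$ are those of the form $(\partial_u\sigma_\gamma-\partial_u\sigma_{\gamma'})^\top Q_{2,\gamma}$ and $(\partial_{xu}\sigma_\gamma-\partial_{xu}\sigma_{\gamma'})^\top Q_{1,\gamma}$, because the $Q$-processes are controlled only in $L^\beta(\Omega;L^2(0,T))$, not in $L^p([0,T]\times\Omega)$ for any $p>2$; for these your generic H\"older split must put the $Q$-factor in $L^2$ and the bounded, measure-convergent coefficient difference in a high $L^r$, the reverse of the role assignment in your recipe. This is a bookkeeping correction rather than a gap, and with it your argument goes through.
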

\begin{proof}
    Utilizing \eqref{aeqpq1} and \eqref{aeqpq2}, together with (H3), it yields that 
    \begin{align*}
    \sup_{\gamma\in\Gamma} \mathbb{E}\int_{0}^{T} |\mathbb{S}_\gamma(t)|^2 \mathrm{d}t 
    & \lesssim  \sup_{\gamma\in\Gamma} \mathbb{E}\int_{0}^{T}\big( |\partial_{xu}H_{\gamma}(t,\bar{x}_\gamma,\bar{u},P_{1,\gamma},Q_{1,\gamma})|^2 +| \partial_u b_\gamma(t,\bar{x}_\gamma,\bar{u})^\top P_{2,\gamma}|^2  \\
     & \indent\indent\indent\quad +| \partial_u \sigma_\gamma(t,\bar{x}_\gamma,\bar{u})^\top Q_{2,\gamma} |^2    +| \partial_u \sigma_\gamma(t,\bar{x}_\gamma,\bar{u})^\top P_{2,\gamma}\partial_x \sigma_\gamma(t,\bar{x}_\gamma,\bar{u}) |^2\big) \mathrm{d}t  \\
    &  \lesssim
     \sup_{\gamma\in\Gamma} \mathbb{E}\int_{0}^{T} \big(1+|P_{1,\gamma}(t)|^2 +|P_{2,\gamma}(t)|^2 +|Q_{1,\gamma}(t)|^2 +|Q_{2,\gamma}(t)|^2 \big) \mathrm{d}t   <\infty.
\end{align*}
By the procedure described in \eqref{enpcn1}-\eqref{enpcn4}, it can be proved that 
\begin{equation}\label{lgxzy}
\begin{aligned}
    &\lim_{\ell \to0} \sup_{ \mathsf{d} (\gamma,\gamma')\le\ell }  \mathbb{E}\int_{0}^{T} \left|\mathbb{S}_\gamma(t)-\mathbb{S}_{\gamma'}(t)\right|\mathrm{d}t =0.
\end{aligned}
\end{equation}
Furthermore, together with \eqref{essthete}, utilizing H\"{o}lder's inequality repeatedly, we obtain \eqref{lethetheprocontisup}. 
\end{proof}

\begin{rem}
    In Theorem \ref{thmfirornedes} about the integral type second-order necessary conditions, the estimate \eqref{lgxzy} in $L^1(\mathbb{P} (\mathrm d \omega )\times \mathrm d t)$ sense is sufficient to carry out the proof. And actually \eqref{lgxzy} can be strengthened to \eqref{lethetheprocontisup} provided with  \eqref{essthete}. 
    While in the pointwise setting (see Theorem \ref{thm-point-nece-cla} below), we have more restrictions \emph{(H6)} on coefficients to guarantee the existence of certain Malliavin derivative and the application of Clark-Ocone theorem.
    Then additionally with this condition, the following \eqref{jhzls} can be proved with a direct estimate 
\begin{equation}\label{jhzls}
        \begin{aligned}
        &\lim_{\ell \to0} \sup_{ \mathsf{d} (\gamma,\gamma')\le\ell }  \mathbb{E}\int_{0}^{T} \left|\mathbb{S}_\gamma(t)-\mathbb{S}_{\gamma'}(t)\right|^{2}\mathrm{d}t =0. 
      \end{aligned}
\end{equation}
\end{rem}

As a consequence  of \eqref{lgxzy}, using approximation method with the help of partitions of unity theorem in Lemma \ref{partunith}, 
$\mathbb{S}$ is progressively measurable in the sense that for any $t\in[0,T]$,
\begin{align*}
&\mathbb{S}:\quad \Gamma\times\Omega\times[0,t]\times \mathbb{R}^n\times U\times \mathbb{R}^n\times\mathbb{R}^n\times\mathbf{S}^n\times\mathbf{S}^n\to\mathbb{R}^{m\times n};  \\
&\mathcal{B}(\Gamma)\otimes\mathcal{F}_t\otimes\mathcal{B}([0,t])\otimes \mathcal{B}(\mathbb{R}^n)\otimes \mathcal{B}(\mathbb{R}^m)\otimes \mathcal{B}(\mathbb{R}^n)\otimes\mathcal{B}(\mathbb{R}^n)\otimes\mathcal{B}(\mathbf{S}^n)\otimes\mathcal{B}(\mathbf{S}^n)\to\mathcal{B}(\mathbb{R}^{m\times n})
\end{align*}
is measurable.
We sketch the proof here for the convenience of the readers. Similar analyses hold for $x,y_1, (P_1,Q_1), (P_2,Q_2), \mathcal{D}_\cdot\mathbb{S}_\cdot(\cdot)$, and the details are omitted. 

Since $\Gamma $ is a locally compact Polish space, for each $N\in \mathbb{N}_+$, choose a compact subset $\mathsf{S}^N \subset \Gamma $ with $\lambda (\gamma \not\in \mathsf{S}^N)<\frac{1}{N} $. 
Besides, there exists a sequence of open neighborhoods $\{B(\gamma_i,\frac{1}{2N} )\}_{i=1}^{\varsigma_N}$ such that $\mathsf{S}^N \subset \bigcup_{i=1}^{\varsigma_N} B(\gamma_i,\frac{1}{2N} )$. 
By partitions of unity (Lemma \ref{partunith}), there exists a sequence of continuous functions $\rho_i: \Gamma \to [0,1]\subset \mathbb{R} $ such that $\rho_i(\gamma )=0$ for $\gamma \not\in B(\gamma_i,\frac{1}{2N} ), i=1,\cdots,\varsigma_N$, and $\sum_{i = 1}^{\varsigma_N}\rho_i(\gamma )=1  $ for $\gamma \in \mathsf{S}^N$.
Furthermore, choose the following  joint measurable process $ {P}^N_\gamma(\cdot)$ with $\gamma_i^*$ satisfying $\rho_i(\gamma_i^*)>0$: 
\[ {\mathbb{S}}^N_\gamma (t):= \sum_{i = 1}^{\varsigma_N} \mathbb{S}_{\gamma_i^*}(t) \rho_i(\gamma )\chi_{\{\gamma \in \mathsf{S}^N\}}.\] 
It is sufficient to prove 
\begin{align}\label{lcyPgc2} 
\lim_{N\to\infty}\int_\Gamma \int_{0}^{T} \mathbb{E}\ | \mathbb{S}^N_\gamma (t)-\mathbb{S}_{\gamma}(t) | \mathrm{d}t \lambda (\mathrm{d}\gamma ) =0 .
\end{align}
Indeed, 
\begin{align*}
 \int_{0}^{T} \mathbb{E}\ | \mathbb{S}^N_\gamma (t)-\mathbb{S}_{\gamma}(t) | \dif t 
    & 
    \le 
    \sum_{i = 1}^{\varsigma_N} \int_{0}^{T} \mathbb{E}\ | \mathbb{S}^N_{\gamma_i^*} (t)-\mathbb{S}_{\gamma}(t) |  \dif t \rho_i(\gamma )\chi_{\{\gamma \in \mathsf{S}^N\}}
    + \int_{0}^{T} \mathbb{E}\ | \mathbb{S}_{\gamma}(t) |  \dif t  \chi_{\{\gamma \not\in \mathsf{S}^N\}}
    \\
    &  
    \le \sup_{\mathsf{d}(\gamma_1 ,\gamma_2)\le \frac{1}{N}  }  \int_{0}^{T} \mathbb{E}\ | \mathbb{S}_{\gamma_1}(t)-\mathbb{S}_{\gamma_2}(t) |  \dif t
    + \sup_{\gamma \in \Gamma } \int_{0}^{T} \mathbb{E}\ |\mathbb{S}_{\gamma}(t)|  \dif t \chi_{\{\gamma \not\in \mathsf{S}^N\}}. 
\end{align*}
Then 
\begin{align*}
    \lim_{N\to\infty}\int_\Gamma \int_{0}^{T} \mathbb{E}\ | \mathbb{S}^N_\gamma (t)-\mathbb{S}_{\gamma}(t) | \dif t \lambda (\dif\gamma ) 
&
\lesssim \lim_{N\to\infty} \Big\{ \sup_{\mathsf{d}(\gamma_1 ,\gamma_2)\le \frac{1}{N}  }  \int_{0}^{T} \mathbb{E}\ | \mathbb{S}_{\gamma_1}(t)-\mathbb{S}_{\gamma_2}(t) |  \dif t + \frac{C}{N} \Big\}
\\
&
 \overset{\eqref{lgxzy}}{=}0. 
\end{align*}

Denote
\[\Lambda^{u}:=
       \Big\{\lambda \in\Lambda \  \big|\  J(u(\cdot))=\int_{\Gamma}\mathbb{E} \big[\int_0^T f_\gamma(t,x_\gamma(t),u(t))\mathrm{d}t
       +h_\gamma(x_\gamma(T))\big]\lambda(\mathrm{d}\gamma)\Big\}.\]
Under Assumptions (H1)-(H4), together with \eqref{cost-function-main}, \eqref{wpec}, and \eqref{lem-cost-continuous-theta-theta-pron-descri}, and utilizing the procedure of weak convergence arguments, the set $\Lambda^{u}$ defined above is nonempty for any $ u(\cdot)\in \mathcal{U}^\beta $. Moreover, $\Lambda^{u}$ is convex and weakly compact for any $u(\cdot)\in \mathcal{U}^\beta $. It can be regarded as a corollary of \cite[Lemma 3.5]{humingshangwangfalei2020siam}, where a standard proof in a more general setting is given. Interested readers can also refer to \cite[(21)]{hlw23} for details. The specifics are omitted for conciseness.

The equation \eqref{variant-equat-first} has explicit solution formula due to its linear structure (cf.  \cite[Section 3.1]{zhjf17}).
Denote by $\Phi_\gamma$ the solutions of the matrix-valued stochastic differential equation
\begin{equation*}
\left\{
\begin{aligned}
&\mathrm{d}\Phi_\gamma(t)=\partial_x b_\gamma(t)\Phi_\gamma(t)\mathrm{d}t +\partial_x \sigma_\gamma(t)\Phi_\gamma(t)\mathrm{d}W(t), \quad  t\in[0,T],  \\
&\Phi_\gamma(0)=I_{n\times n}.
\end{aligned}
\right.
\end{equation*}
It  can be  verified that
\begin{equation}\label{solu-form-ele-phi}
  \Phi_\gamma(t)=\Phi_\gamma(\tau)+\int_\tau^t\partial_x b_\gamma(s)\Phi_\gamma(s)\mathrm{d}s+\int_\tau^t\partial_x \sigma_\gamma(s)\Phi_\gamma(s)\mathrm{d}W(s),
\end{equation}
and the solution to \eqref{variant-equat-first} has the following representation:
\begin{equation}\label{lirephy}
\begin{aligned}
y_{1,\gamma}(t)=\Phi_\gamma(t) \! \int_0^t {\Phi_\gamma(s)}^{-1} \! \left(\partial_u b_\gamma(s) \! - \! \partial_x\sigma_\gamma(s)\partial_u\sigma_\gamma(s)\right)v(s)\mathrm{d}s   \!  + \! \Phi_\gamma(t) \! \int_0^t{\Phi_\gamma(s)}^{-1}\partial_u\sigma_\gamma(s)v(s)\mathrm{d}W(s).
\end{aligned}
\end{equation}

The definition of singular involved in this article is as follows.
\begin{defi}\label{singularoptimalcontroldef}

(i)
$\tilde{u}\in\mathcal{U}^\beta $ is called a \emph{singular control in the classical sense} if for any $ \lambda\in\Lambda^{\tilde{u}}$, $\tilde{u}$ satisfies
\begin{equation}\label{def-singular-con}
\left\{
\begin{aligned}
&\int_\Gamma\partial_u H_\gamma(t,\tilde{x}_\gamma(t),\tilde{u}(t),\tilde{P}_{1,\gamma}(t),\tilde{Q}_{1,\gamma}(t))\lambda(\mathrm{d}\gamma)=0 \quad    a.s.\  a.e.\   t\in[0,T],
\\&\int_\Gamma\Big[\partial_{uu}H_{\gamma}(t,\tilde{x}_\gamma(t),\tilde{u}(t),\tilde{P}_{1,\gamma}(t),\tilde{Q}_{1,\gamma}(t)) \\
&\indent +\partial_u \sigma_\gamma(t,\tilde{x}_\gamma(t),\tilde{u}(t))^\top \tilde{P}_{2,\gamma}(t) \partial_u \sigma_\gamma(t,\tilde{x}_\gamma(t),\tilde{u}(t))\Big]\lambda(\mathrm{d}\gamma) =0  \quad   a.s.\  a.e.\   t\in[0,T],
\end{aligned}
\right.
\end{equation}
where $\tilde{x}_\gamma$ is the state with respect to $\tilde{u}$, and $(\tilde{P}_{1,\gamma},\tilde{Q}_{1,\gamma})$, $(\tilde{P}_{2,\gamma},\tilde{Q}_{2,\gamma})$ are given by~\eqref{adjoint-equa-first} and~\eqref{adjoint-equa-second} respectively but with $(\bar{u},\bar{x}_\gamma)$ replaced by $(\tilde{u},\tilde{x}_\gamma)$.

(ii) If the singular control $\tilde{u}$ is also optimal, then it is called a \emph{singular optimal control in the classical sense}.
\end{defi}

\begin{rem}
Note that Definition~\ref{singularoptimalcontroldef} of singular optimal control actually corresponds to the degeneration of the first-order necessary conditions for stochastic optimal control problems, a detailed justification for which can be found in \cite[Remark 3.4]{zhanghaisenzhangxu2015siam}. In other words, it describes the case where stochastic maximum principle provides only trivial information.
\end{rem}

Up to now, we have established the well-posedness of the stochastic optimal control problems formulated in \eqref{state-equa-gene-main}-\eqref{cpmc}. 
Denote 
\begin{equation}\label{notaip}
        \mathfrak{I} (v_1,v_2;\lambda )
        :=  - \int_{\Gamma} \mathbb{E}\int_0^T \left\langle\mathbb{S}_\gamma(t)y_{1,\gamma}(t;v_1), v_2(t)\right\rangle \mathrm{d}t \lambda (\mathrm{d}\gamma);  
        \quad 
        \mathfrak{I} (v;\cdot) := \mathfrak{I} (v,v;\cdot) .
\end{equation}  
We see that $\mathfrak{I} (\cdot;\lambda )$ is nonhomogeneous bi-linear with respect to the perturbed control $u$.

The monotonicity condition plays a fundamental role in the context of optimization theory.
Inspired by Kachurovskii's theorem (cf. \cite{Showalter}), we can strengthen the results in \eqref{initial-second-necessary-condition-thm} to derive a result with common reference measure under the following monotonicity condition:
\begin{description}
  \item[(H5)] 
For any $u_1(\cdot), u_2(\cdot) \in L_\mathbb{F}^4 (\Omega; L^4 ([0,T]; \mathbb{R}^m))$, 
it holds that for any $\gamma \in \Gamma$, 
\[ - \mathbb{E}\int_0^T \left\langle\mathbb{S}_\gamma(t)y_{1,\gamma}(t;u_1-u_2), u_1(t)-u_2(t)\right\rangle \mathrm{d}t   \ge 0.\]
\end{description}
\noindent 
Besides, Malliavin calculus facilitates the application of the Lebesgue type differentiation theorem to obtain the second-order pointwise necessary conditions for stochastic optimal control, which transforms the It\^{o}-Lebesgue integration into multiple Lebesgue integration. However, it works at the expense of additional conditions to handle the new emerging terms. We further present the following technical assumptions:
\begin{description}
  \item[(H6)]
 (i)  $\bar{u}(\cdot )\in \mathbb{L}_{2,\mathbb{F}}^{1,2}(\mathbb{R}^m)$, $\mathbb{S}_\gamma(\cdot)\in \mathbb{L}_{2,\mathbb{F}}^{1,2}(\mathbb{R}^{m\times n})\bigcap L^\infty ([0,T]\times\Omega;\mathbb{R}^{m\times n})$,  \\ \quad 
 and $\mathcal{D}_\cdot\mathbb{S}_\gamma(\cdot) \in L^2([0,T];L^\infty ([0,T]\times\Omega;\mathbb{R}^{m\times n}))$,  
 uniformly for $\gamma\in\Gamma$.
\vspace*{-0.3cm}

\begin{equation}\label{der-phi-nab-phi-cont-ass}
    \hspace*{-2.8cm}
\textnormal{(ii)} \ \  
\begin{aligned}
\lim_{\ell\to0}\sup_{ \mathsf{d} (\gamma,\gamma')\le\ell } \int_0^T \! \int_0^T\mathbb{E}   \left|\mathcal{D}_s\mathbb{S}_\gamma(t)-\mathcal{D}_s\mathbb{S}_{\gamma'}(t)\right|^2 \mathrm{d}s\mathrm{d}t =0.   \hspace*{3cm}
\end{aligned}
\end{equation}
\vspace*{-0.4cm}
\end{description}

In the following theorem, we present the first main result of this paper which characterizes the integral type second-order necessary condition for the singular stochastic optimal  control problem in \eqref{state-equa-gene-main}-\eqref{cpmc}.

\begin{thm}\label{thmfirornedes}
Let Assumptions \emph{(H1)-(H4)} hold, and $\bar{u}(\cdot)$  a singular optimal control as defined in Definition \ref{singularoptimalcontroldef}, satisfying $\bar{u}(\cdot)\in \mathcal{U}^\beta$ with given $\beta \ge 4$. Then for any $v(\cdot)=u(\cdot)-\bar{u}(\cdot)$ with $u(\cdot)\in\mathcal{U}^4$, it holds that 
\begin{equation}\label{initial-second-necessary-condition-thm}
  \begin{aligned}
\int_{\Gamma} \mathbb{E}\int_0^T \left\langle\mathbb{S}_\gamma(t)y_{1,\gamma}(t),  v(t)\right\rangle \mathrm{d}t \lambda^*(u; \mathrm{d}\gamma) \le 0,
\end{aligned}
\end{equation}
where $\mathbb{S}_\gamma(t)$ is given in~\eqref{defi-St} and $y_{1,\gamma}(t)$ in~\eqref{variant-equat-first}.
\end{thm}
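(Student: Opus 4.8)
The plan is to carry out a convex second-order variational expansion, use duality with the two adjoint equations \eqref{adjoint-equa-first}--\eqref{adjoint-equa-second} to isolate the single term that survives the singular structure, and then pass to the limit along a sequence of worst-case reference measures by weak compactness and the minimax theorem. First I would fix $v(\cdot)=u(\cdot)-\bar u(\cdot)$, set $u^{\varepsilon}=\bar u+\varepsilon v$, and record a Taylor expansion of the per-scenario cost that is \emph{uniform in} $\gamma$: combining the linearised equations \eqref{variant-equat-first}--\eqref{variant-equat-second} with Proposition~\ref{lem-ele-y1y2-delta-x-esti} (the $O(\varepsilon^{3})$ estimate in $\|\cdot\|_{\infty,\beta}$ for $\delta x_\gamma-\varepsilon y_{1,\gamma}-\tfrac{\varepsilon^{2}}{2}y_{2,\gamma}$, uniformly in $\gamma$) and Proposition~\ref{lem-ele-y1y2-delta-x-esti-sec-f} (the $O(\varepsilon^{3})$ remainders for $f_\gamma,h_\gamma$), one gets $\mathcal J(u^{\varepsilon};\gamma)-\mathcal J(\bar u;\gamma)=\varepsilon\,\mathcal A_\gamma(v)+\varepsilon^{2}\,\mathcal B_\gamma(v)+r_\gamma(\varepsilon)$ with $\sup_{\gamma\in\Gamma}|r_\gamma(\varepsilon)|=o(\varepsilon^{2})$, where $\mathcal A_\gamma,\mathcal B_\gamma$ are explicit in $y_{1,\gamma},y_{2,\gamma}$ and the derivatives of the coefficients.

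Next I would apply It\^o's formula to the pairings $\langle P_{1,\gamma},y_{1,\gamma}\rangle$, $\langle P_{1,\gamma},y_{2,\gamma}\rangle$ and $\langle P_{2,\gamma}y_{1,\gamma},y_{1,\gamma}\rangle$, and, using \eqref{adjoint-equa-first}--\eqref{adjoint-equa-second}, \eqref{hamiltonian}, and \eqref{defi-S}--\eqref{defi-St}, rewrite $\mathcal A_\gamma(v)=-\mathbb E\int_{0}^{T}\langle\partial_u H_\gamma(t),v(t)\rangle\,\mathrm dt$ and
\[
\mathcal B_\gamma(v)=-\tfrac12\mathbb E\!\int_{0}^{T}\!\big\langle[\partial_{uu}H_\gamma(t)+\partial_u\sigma_\gamma(t)^{\top}P_{2,\gamma}(t)\partial_u\sigma_\gamma(t)]v(t),v(t)\big\rangle\,\mathrm dt-\mathbb E\!\int_{0}^{T}\!\langle\mathbb S_\gamma(t)y_{1,\gamma}(t),v(t)\rangle\,\mathrm dt,
\]
with all integrals finite by \eqref{zgjg}, \eqref{aeqpq1}--\eqref{aeqpq2}, and \eqref{essthete}. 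Then I would invoke both displays of Definition~\ref{singularoptimalcontroldef}: for every $\lambda\in\Lambda^{\bar u}$ one has $\int_\Gamma\partial_u H_\gamma(t)\lambda(\mathrm d\gamma)=0$ and $\int_\Gamma[\partial_{uu}H_\gamma(t)+\partial_u\sigma_\gamma(t)^{\top}P_{2,\gamma}(t)\partial_u\sigma_\gamma(t)]\lambda(\mathrm d\gamma)=0$ a.s.\ a.e., so by Fubini $\int_\Gamma\mathcal A_\gamma(v)\lambda(\mathrm d\gamma)=0$ and $\int_\Gamma\mathcal B_\gamma(v)\lambda(\mathrm d\gamma)=\mathfrak I(v;\lambda)$ with $\mathfrak I$ as in \eqref{notaip}. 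Hence \eqref{initial-second-necessary-condition-thm} is equivalent to producing one $\lambda^{*}(u;\cdot)\in\Lambda^{\bar u}$ with $\mathfrak I(v;\lambda^{*}(u;\cdot))\ge0$.

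For the limit passage I would pick, for each $\varepsilon$, a worst-case measure $\lambda^{\varepsilon}\in\Lambda^{u^{\varepsilon}}$ (nonempty by the discussion after \eqref{notaip}) and extract $\lambda^{\varepsilon}\rightharpoonup\lambda^{*}(u;\cdot)$ by weak compactness of $\Lambda$. The uniform-in-$\gamma$ expansion, the bound \eqref{wpec}, and the continuity-in-$\gamma$ estimates of Propositions~\ref{pr-su-co-li-0-co} and \ref{pr-su-co-listhet} and of \eqref{par-cont-p1p2-de}--\eqref{par-cont-q1q2-de} give $J(u^{\varepsilon})\to J(\bar u)$, identify the limit so that $\lambda^{*}(u;\cdot)\in\Lambda^{\bar u}$, and let me pass $\int_\Gamma\mathcal A_\gamma(v)\lambda^{\varepsilon}(\mathrm d\gamma)$ and $\int_\Gamma\mathcal B_\gamma(v)\lambda^{\varepsilon}(\mathrm d\gamma)$ to $0$ and $\mathfrak I(v;\lambda^{*}(u;\cdot))$. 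Optimality gives $J(u^{\varepsilon})-J(\bar u)\ge0$; writing this through $\lambda^{\varepsilon}$ and the expansion yields $0\le\varepsilon\int_\Gamma\mathcal A_\gamma(v)\lambda^{\varepsilon}(\mathrm d\gamma)+\varepsilon^{2}\int_\Gamma\mathcal B_\gamma(v)\lambda^{\varepsilon}(\mathrm d\gamma)+o(\varepsilon^{2})$. Comparing the value of $\lambda^{\varepsilon}$ with that of $\lambda^{*}(u;\cdot)$ — which sits on the maximising face of the unperturbed functional, where $\int_\Gamma\mathcal A_\gamma(v)\lambda(\mathrm d\gamma)$ vanishes by singularity — and organising the resulting saddle-type inequalities through the minimax theorem should absorb the first-order contribution at order $\varepsilon^{2}$, so that dividing by $\varepsilon^{2}$ and letting $\varepsilon\to0$ gives $\mathfrak I(v;\lambda^{*}(u;\cdot))\ge0$, i.e.\ \eqref{initial-second-necessary-condition-thm}.

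The step I expect to be the main obstacle is precisely this last one: because the formulation is a genuine $\inf\sup$, the worst-case measure $\lambda^{\varepsilon}$ drifts off $\Lambda^{\bar u}$ as $\varepsilon\to0$ at a rate controlled only by the modulus $\overline{\omega}_N$ of (H4), so $\varepsilon\int_\Gamma\mathcal A_\gamma(v)\lambda^{\varepsilon}(\mathrm d\gamma)$ is a priori only $o(\varepsilon)$ and not manifestly $o(\varepsilon^{2})$; reconciling this with the second-order expansion while keeping the sign of $\mathfrak I$ intact is where weak compactness, the continuity propositions, and the minimax theorem must be used in an essential way. By contrast, the It\^o/duality bookkeeping that produces $\mathcal A_\gamma$ and $\mathcal B_\gamma$, together with the associated measurability and Fubini justifications, is lengthy but entirely routine.
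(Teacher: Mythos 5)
Your expansion, the It\^o/duality computation producing $\mathcal A_\gamma$ and $\mathcal B_\gamma$, and the plan of extracting a weak limit $\lambda^{*}(u;\cdot)\in\Lambda^{\bar u}$ from worst-case measures $\lambda^{\varepsilon}\in\Lambda^{u^{\varepsilon}}$ and combining the resulting upper bound with $J(u^{\varepsilon})\ge J(\bar u)$ all coincide with the paper's Steps 1--2. The genuine gap is precisely the step you flag as the main obstacle and then do not close: you never prove that $\varepsilon\int_\Gamma\mathcal A_\gamma(v)\,\lambda^{\varepsilon}(\mathrm d\gamma)=o(\varepsilon^{2})$, but only assert that saddle-type inequalities ``through the minimax theorem should absorb'' it. That is not an argument, and the minimax theorem is the wrong tool here: in the paper, Lemma \ref{mimmaxthe} is used only in Corollary \ref{thmfirornedesf}, where the monotonicity hypothesis (H5) supplies the convexity of $\mathfrak I(\cdot;\lambda)$ needed to exchange $\inf$ and $\sup$ and remove the $u$-dependence of the reference measure; no minimax structure is invoked (or available, absent (H5)) in the proof of Theorem \ref{thmfirornedes} itself.

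The paper closes this step in \eqref{tec-epsi-firs-delt-j-limi} by a splitting argument rather than a rate estimate: since the weak limit $\lambda^{*}$ lies in $\Lambda^{\bar u}$, the first display of Definition \ref{singularoptimalcontroldef} gives $\int_\Gamma\mathbb E\int_0^T\langle\partial_u H_\gamma(t),v(t)\rangle\,\mathrm dt\,\lambda^{*}(\mathrm d\gamma)=0$ \emph{exactly}, so the first-order contribution reduces to $\frac{1}{\varepsilon_N}\int_\Gamma\{\cdots\}\,(\lambda^{\varepsilon_N}-\lambda^{*})(\mathrm d\gamma)$, which the paper then disposes of using that $\lambda^{\varepsilon_N}$ and $\lambda^{*}$ are both probability measures (so the difference has zero total mass) together with the uniform bound $\sup_{\gamma\in\Gamma}\big|\mathbb E\int_0^T\langle\partial_u H_\gamma(t),v(t)\rangle\,\mathrm dt\big|<\infty$ for $\beta\ge4$. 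Without some version of this cancellation, your chain of inequalities only yields $0\le o(\varepsilon)+\varepsilon^{2}\,\mathfrak I(v;\lambda^{\varepsilon})+o(\varepsilon^{2})$, from which nothing survives division by $\varepsilon^{2}$. Your concern that $\lambda^{\varepsilon}$ drifts off $\Lambda^{\bar u}$ at an uncontrolled rate is exactly the right worry; the resolution is the exact vanishing against $\lambda^{*}$ supplied by the singularity definition, not the minimax theorem.
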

The proof of Theorem \ref{thmfirornedes} is presented in  Section \ref{secprothfirinter}. 
To further obtain pointwise second-order necessary optimality conditions based on Theorem \ref{thmfirornedes}, firstly we derive the following variational inequalities with common reference uncertainty measure.
\begin{cor}\label{thmfirornedesf}
Under the conditions in Theorem \ref{thmfirornedes}, \textcolor{blue}{ together with \emph{(H5)},}  there exists $\lambda^*\in\Lambda^{\bar{u}(\cdot)}$, such that for any $v(\cdot)=u(\cdot)-\bar{u}(\cdot)$ with $u(\cdot)\in\mathcal{U}^4$, we have 
    \begin{equation}\label{yutythio}
      \begin{aligned}
    \int_{\Gamma} \mathbb{E}\int_0^T \left\langle\mathbb{S}_\gamma(t)y_{1,\gamma}(t),  v(t)\right\rangle \mathrm{d}t \lambda^*(\mathrm{d}\gamma) \le 0 . 
      \end{aligned}
      \end{equation}
\end{cor}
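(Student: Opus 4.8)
The plan is to upgrade the family of inequalities \eqref{initial-second-necessary-condition-thm}, each of which involves a reference measure $\lambda^*(u;\cdot)$ \emph{depending on the perturbation direction} $v=u-\bar u$, to a single inequality with a \emph{common} reference measure $\lambda^*\in\Lambda^{\bar u(\cdot)}$, valid simultaneously for all admissible $v$. The natural device is a minimax (saddle point) argument: consider the function
\[
  \Psi(v,\lambda):=\mathfrak I(v;\lambda)=-\int_\Gamma\mathbb E\int_0^T\langle\mathbb S_\gamma(t)y_{1,\gamma}(t;v),v(t)\rangle\,\mathrm dt\,\lambda(\mathrm d\gamma)
\]
on $\mathcal V\times\Lambda^{\bar u(\cdot)}$, where $\mathcal V:=\{u(\cdot)-\bar u(\cdot):u(\cdot)\in\mathcal U^4\}$. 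First I would record the structural properties: $\lambda\mapsto\Psi(v,\lambda)$ is affine (indeed linear) and weakly continuous on the convex weakly compact set $\Lambda^{\bar u(\cdot)}$ (using Proposition~\ref{pr-su-co-listhet} together with the continuity-in-$\gamma$ estimates \eqref{par-cont-p1p2-de}, \eqref{par-cont-q1q2-de}, \eqref{lethetheprocontisup} and the weak compactness/convexity of $\Lambda^{\bar u(\cdot)}$ noted after Proposition~\ref{pr-su-co-listhet}); and $v\mapsto\Psi(v,\lambda)$ is a quadratic form in $v$ which, by hypothesis (H5), satisfies $\Psi(v,\lambda)=-\mathbb E\int_0^T\langle\mathbb S_\gamma(t)y_{1,\gamma}(t;v),v(t)\rangle\,\mathrm dt$ integrated against $\lambda$, hence is \emph{concave} (in fact the integrand is $\ge0$ after the sign, so $v\mapsto-\Psi$ is a nonnegative quadratic form; concavity of $\Psi$ in $v$ follows since $y_{1,\gamma}(\cdot;v)$ is linear in $v$ and (H5) gives the requisite monotonicity/convexity of the associated bilinear form, which is Kachurovskii's theorem as cited).

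Next I would invoke Theorem~\ref{thmfirornedes}: for each fixed $v\in\mathcal V$ there is $\lambda^*(u;\cdot)\in\Lambda^{\bar u(\cdot)}$ with $-\Psi(v,\lambda^*(u;\cdot))\le0$, i.e. $\sup_{\lambda\in\Lambda^{\bar u(\cdot)}}\Psi(v,\lambda)\ge0$ — actually one should be careful about the sign here; what \eqref{initial-second-necessary-condition-thm} says is $-\Psi(v,\lambda^*(u;\cdot))\le0$, so $\Psi(v,\lambda^*(u;\cdot))\ge0$, hence $\sup_\lambda\Psi(v,\lambda)\ge0$ for every $v$, which gives $\inf_{v}\sup_\lambda\Psi(v,\lambda)\ge0$ once one checks the infimum is attained or approached at $v=0$ where $\Psi(0,\lambda)=0$; the real content is the reverse. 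Then I would apply Sion's minimax theorem (valid because $\Lambda^{\bar u(\cdot)}$ is convex and weakly compact, $\Psi$ is weakly continuous and affine in $\lambda$, and $\Psi$ is concave, say upper semicontinuous, in the convex set $\mathcal V$) to get
\[
  \sup_{\lambda\in\Lambda^{\bar u(\cdot)}}\inf_{v\in\mathcal V}\Psi(v,\lambda)\;=\;\inf_{v\in\mathcal V}\sup_{\lambda\in\Lambda^{\bar u(\cdot)}}\Psi(v,\lambda)\;\ge\;0,
\]
and since $\lambda\mapsto\inf_v\Psi(v,\lambda)$ is weakly upper semicontinuous on the weakly compact set $\Lambda^{\bar u(\cdot)}$, the outer supremum is attained at some $\lambda^*\in\Lambda^{\bar u(\cdot)}$. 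For that $\lambda^*$ we have $\inf_{v\in\mathcal V}\Psi(v,\lambda^*)\ge0$, i.e. $\Psi(v,\lambda^*)\ge0$ for every $v\in\mathcal V$, which is exactly $-\Psi(v,\lambda^*)\le0$, namely \eqref{yutythio}. (If instead the sign bookkeeping forces the roles of $\inf$ and $\sup$ to be interchanged, one applies the minimax theorem to $-\Psi$, which is convex in $v$ by (H5) and affine in $\lambda$; the argument is symmetric.)

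The main obstacle I anticipate is the verification of the hypotheses of the minimax theorem in the correct topologies — specifically, the joint regularity of $\Psi$: continuity of $\lambda\mapsto\Psi(v,\lambda)$ in the weak topology of measures requires that $\gamma\mapsto\mathbb E\int_0^T\langle\mathbb S_\gamma(t)y_{1,\gamma}(t;v),v(t)\rangle\,\mathrm dt$ be bounded and continuous on $\Gamma$, which is where Proposition~\ref{pr-su-co-li-0-co} (continuity of $y_{1,\gamma}$ in $\gamma$), the adjoint-process continuity \eqref{par-cont-p1p2-de}--\eqref{par-cont-q1q2-de}, and \eqref{lethetheprocontisup} all have to be combined with a Hölder estimate to control the product; and one must also ensure $\Lambda^{\bar u(\cdot)}$ (not just $\Lambda$) is the right domain and is weakly compact and convex, which was recorded earlier as a corollary of \cite[Lemma 3.5]{humingshangwangfalei2020siam}. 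The concavity/convexity of $v\mapsto\Psi(v,\lambda)$ is precisely the content of (H5) read through Kachurovskii's theorem, so that step is essentially a citation once the monotonicity is in hand. A secondary technical point is that $\mathcal V$ is an affine (in fact linear, translated) subset of $L^4_{\mathbb F}(\Omega;L^4([0,T];\mathbb R^m))$ which is convex but not compact; Sion's theorem does not require compactness on the side where the function is concave (here the $v$-side, after arranging signs), so this is not an obstruction provided the concavity is genuine — but it does mean the existence of $\lambda^*$ must come from weak compactness of $\Lambda^{\bar u(\cdot)}$ together with upper semicontinuity of $\lambda\mapsto\inf_v\Psi(v,\lambda)$, which I would justify as an infimum of weakly continuous affine functions.
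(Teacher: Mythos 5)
Your proposal follows essentially the same route as the paper: start from Theorem~\ref{thmfirornedes} to get $\inf_{v}\sup_{\lambda\in\Lambda^{\bar u}}\mathfrak I(v;\lambda)\ge 0$, verify the structural hypotheses of a minimax theorem ($\mathfrak I$ linear in $\lambda$, continuous in $v$ via a H\"older estimate, convex in $v$ via (H5), with $\Lambda^{\bar u}$ convex and weakly compact), interchange $\inf$ and $\sup$, and extract the common $\lambda^*$ from weak compactness (the paper uses a maximizing sequence $\lambda_N^*$ with $\inf_v\mathfrak I(v;\lambda_N^*)\ge -1/N$ and passes to a weak limit; your upper-semicontinuity argument for $\lambda\mapsto\inf_v\Psi(v,\lambda)$ is an equivalent packaging).

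The one point you must fix is the convexity direction in $v$. Under (H5), $\Psi(v,\lambda)=\mathfrak I(v;\lambda)=-\int_\Gamma\mathbb E\int_0^T\langle\mathbb S_\gamma y_{1,\gamma}(\cdot;v),v\rangle\,\mathrm dt\,\lambda(\mathrm d\gamma)$ is a \emph{nonnegative} quadratic form in $v$ (since $y_{1,\gamma}(\cdot;v)$ is linear in $v$ and (H5) makes the associated bilinear form positive semidefinite), hence \emph{convex} in $v$ — not concave, and it is $-\Psi$ that is nonpositive and concave, the opposite of what your main text and your parenthetical hedge assert. This matters because the minimax identity you need, $\sup_\lambda\inf_v\Psi=\inf_v\sup_\lambda\Psi$, requires convexity in the variable being minimized ($v$) and concavity in the variable being maximized ($\lambda$), with compactness on the $\lambda$-side; with your stated "concave in $v$" the theorem would not apply in the direction you invoke it. Once corrected, the configuration matches the paper's Lemma~\ref{mimmaxthe} exactly with $\mathcal M=\mathcal U^4$ and $\mathcal N=\Lambda^{\bar u}$; the paper verifies the convexity not by citing Kachurovskii abstractly but by an explicit computation showing $\kappa\,\mathfrak I(u_1,u_1;\lambda^*)+(1-\kappa)\mathfrak I(u_2,u_2;\lambda^*)\ge\mathfrak I(\kappa u_1+(1-\kappa)u_2;\lambda^*)$ from the monotonicity in (H5), and verifies continuity in $v$ by the interpolation/H\"older estimate using $\sup_\gamma\mathbb E\int_0^T|\mathbb S_\gamma|^2\,\mathrm dt<\infty$ and the linearity of $y_{1,\gamma}$ in $v$. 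Your worry about whether $\inf_v\sup_\lambda\Psi\ge 0$ "is attained at $v=0$" is unnecessary: Theorem~\ref{thmfirornedes} gives $\sup_\lambda\Psi(v,\lambda)\ge\Psi(v,\lambda^*(u;\cdot))\ge 0$ separately for every $v$, so the infimum over $v$ of these quantities is automatically $\ge 0$.
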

The proof of Corollary \ref{thmfirornedesf} is given in the later part of Section \ref{secprothfirinter}. 
The following theorem is the second major result of this article.
\begin{thm}\label{thm-point-nece-cla}
  Let Assumptions \emph{(H1)-(H6)} hold, and $\bar{u}(\cdot)\in \mathcal{U}^\beta$, $\beta \ge 4$ is a singular stochastic optimal control in the sense defined in Definition~\ref{singularoptimalcontroldef}. Then there exists $\lambda^*\in\Lambda^{\bar{u}(\cdot)}$, such that for $a.e.\  \tau\in[0,T]$ and $\forall v\in U$, 
\begin{align}\label{se-ne-thm-fi}
& \int_\Gamma\left\langle\mathbb{S}_\gamma(\tau)\partial_ub_\gamma(\tau)(v-\bar{u}(\tau)),v-\bar{u}(\tau)\right\rangle \lambda^*(\mathrm{d}\gamma)   \notag   \\
&\indent   +\int_\Gamma\left\langle\nabla\mathbb{S}_\gamma(\tau)\partial_u\sigma_\gamma(\tau)(v-\bar{u}(\tau)),v-\bar{u}(\tau)\right\rangle \lambda^*(\mathrm{d}\gamma)  \\
&\indent  -\int_\Gamma\left\langle \mathbb{S}_\gamma(\tau)\partial_u\sigma_\gamma(\tau)(v-\bar{u}(\tau)),\nabla\bar{u}(\tau)\right\rangle \lambda^*(\mathrm{d}\gamma)  \notag  \\
&\quad  \le0 \quad  a.s.  \notag 
\end{align}
\end{thm}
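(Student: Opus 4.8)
The plan is to pass from the integral-form inequality of Corollary~\ref{thmfirornedesf} to the pointwise statement by a Lebesgue-differentiation argument, using the Clark--Ocone formula to convert the It\^o--Lebesgue double integral appearing in $\langle \mathbb{S}_\gamma(t)y_{1,\gamma}(t),v(t)\rangle$ into pure Lebesgue integrals. First I would fix $\tau\in[0,T]$ a Lebesgue point and, for small $h>0$ and a fixed $v\in U$, choose the spike-type perturbation $v_h(\cdot):=(v-\bar u(\cdot))\chi_{[\tau,\tau+h]}(\cdot)$, so that $u_h:=\bar u+v_h\in\mathcal{U}^4$. Plugging $v_h$ into \eqref{yutythio} with the common reference measure $\lambda^*\in\Lambda^{\bar u}$ gives
\begin{equation*}
\int_{\Gamma}\mathbb{E}\int_{\tau}^{\tau+h}\left\langle\mathbb{S}_\gamma(t)y_{1,\gamma}(t;v_h),\,(v-\bar u(t))\right\rangle\mathrm{d}t\,\lambda^*(\mathrm{d}\gamma)\le0.
\end{equation*}
The task is then to compute $\lim_{h\to0}h^{-2}$ of the left side and identify it with the left side of \eqref{se-ne-thm-fi}.

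The key steps, in order: (1) Use the representation \eqref{lirephy} for $y_{1,\gamma}(\cdot;v_h)$; since $v_h$ is supported on $[\tau,\tau+h]$, on $[\tau,\tau+h]$ one has $y_{1,\gamma}(t;v_h)=\Phi_\gamma(t)\int_\tau^t\Phi_\gamma(s)^{-1}[(\partial_ub_\gamma(s)-\partial_x\sigma_\gamma(s)\partial_u\sigma_\gamma(s))(v-\bar u(s))\,\mathrm{d}s+\partial_u\sigma_\gamma(s)(v-\bar u(s))\,\mathrm{d}W(s)]$, which is $O(h)$ in $L^2$ from the $\mathrm{d}s$ part and $O(\sqrt h)$ from the $\mathrm{d}W$ part. (2) Decompose $\langle\mathbb{S}_\gamma(t)y_{1,\gamma}(t;v_h),v-\bar u(t)\rangle$ accordingly: the drift contribution integrates to an $O(h^2)$ term whose $h^{-2}$-limit, after a Lebesgue-point argument for the $(t,s)\mapsto$ coefficients, yields $\int_\Gamma\langle\mathbb{S}_\gamma(\tau)\partial_ub_\gamma(\tau)(v-\bar u(\tau)),v-\bar u(\tau)\rangle\lambda^*(\mathrm{d}\gamma)$. (3) For the It\^o part, which is the delicate one: one has a term of the shape $\mathbb{E}\int_\tau^{\tau+h}\big\langle\mathbb{S}_\gamma(t)\Phi_\gamma(t)\int_\tau^t\Phi_\gamma(s)^{-1}\partial_u\sigma_\gamma(s)(v-\bar u(s))\,\mathrm{d}W(s),\,v-\bar u(t)\big\rangle\mathrm{d}t$; here apply the Clark--Ocone formula \eqref{claocoori} to $\mathbb{S}_\gamma(t)$ and to $v-\bar u(t)=v-\bar u(t)$ (using $\bar u\in\mathbb{L}_{2,\mathbb{F}}^{1,2}$ from (H6)(i)), so that the It\^o-isometry produces, at leading order $h^2$, terms involving $\mathcal{D}_s\mathbb{S}_\gamma(s)$ and $\mathcal{D}_s\bar u(s)$ evaluated near $\tau$; the $\nabla=\mathcal{D}^++\mathcal{D}^-$ notation precisely records the one-sided limits $s\uparrow t$ and the diagonal contributions, giving the $\nabla\mathbb{S}_\gamma$ and $\nabla\bar u$ terms in \eqref{se-ne-thm-fi}. (4) Divide by $h^2$, invoke Proposition~\ref{pr-su-co-listhet}, \eqref{par-cont-p1p2-de}--\eqref{par-cont-q1q2-de}, Proposition~\ref{pr-su-co-li-0-co}, (H6)(ii) and the uniform bounds in (H6)(i) to pass the limit inside $\int_\Gamma\cdots\lambda^*(\mathrm{d}\gamma)$ by dominated convergence, and conclude. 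Finally, discard a $\lambda^*$-null exceptional set of $(\tau,\omega)$ via a countable dense set of $v\in U$ and continuity in $v$.

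The main obstacle I expect is step (3): the It\^o stochastic integral $\int_\tau^t\Phi_\gamma(s)^{-1}\partial_u\sigma_\gamma(s)(v-\bar u(s))\,\mathrm{d}W(s)$ has only $O(\sqrt h)$ size, so naively it contributes at order $h^{3/2}$, not $h^2$; the $h^2$-order survival of the relevant terms hinges on the Malliavin integration-by-parts (Clark--Ocone) that pairs the stochastic integral against the $\mathrm{d}W$-fluctuations of $\mathbb{S}_\gamma(t)$ and $\bar u(t)$, and correctly controlling the integrable-order deficiency of these mixed Lebesgue--It\^o double integrals is exactly what the Malliavin-approximation and Lebesgue-differentiation lemmas in the appendix are designed for. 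Handling the one-sided Malliavin derivatives $\mathcal{D}^\pm$ on the diagonal $s=t$, and verifying that the off-diagonal remainder is genuinely $o(h^2)$ uniformly in $\gamma$, will require careful use of (H6) together with the modulus-of-continuity estimate \eqref{der-phi-nab-phi-cont-ass}. A secondary technical point is justifying the interchange of $\lim_{h\to0}h^{-2}$ with $\int_\Gamma\cdot\,\lambda^*(\mathrm{d}\gamma)$, for which the uniform-in-$\gamma$ estimates established in Propositions~\ref{lem-ele-y1y2-delta-x-esti}--\ref{pr-su-co-listhet} and the boundedness hypotheses in (H6)(i) provide the needed domination.
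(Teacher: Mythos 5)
Your overall strategy is the paper's own: a spike perturbation supported on $[\tau,\tau+\alpha)$, division by $\alpha^{2}$, Lebesgue differentiation for the Lebesgue--Lebesgue part, and Clark--Ocone plus Malliavin approximation for the mixed Lebesgue--It\^o part, exactly as in Section~\ref{seclapoisecpro}. However, there are two concrete gaps. The first is that your perturbation $v_h=(v-\bar u)\chi_{[\tau,\tau+h]}$ is localized only in time, so the $h^{-2}$-limit yields an inequality for an \emph{expectation}, $\mathbb{E}[g(\tau,\cdot)]\le0$, not the almost-sure pointwise inequality \eqref{se-ne-thm-fi}. The paper's perturbation is $v_{ij}^k=(v^k-\bar u)\chi_{A_{ij}}(\omega)\chi_{E_\alpha^i}(t)$ with $\{A_{ij}\}_{j}$ a countable family generating $\mathcal{F}_{t_i}$; testing against all such sets is what upgrades $\mathbb{E}[g\,\chi_{A_{ij}}]\le0$ to $g\le0$ a.s. Your closing remark about discarding a null set handles only density in $v$ and $\tau$, not this $\omega$-localization, so the argument as outlined stops one step short of the stated conclusion.

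The second gap is in the bookkeeping of the leading-order terms. By \eqref{lirephy} the $\mathrm{d}s$-part of $y_{1,\gamma}(\cdot;v_h)$ carries the coefficient $\partial_u b_\gamma-\partial_x\sigma_\gamma\partial_u\sigma_\gamma$, so its $h^{-2}$-limit is $\frac12\int_\Gamma\mathbb{E}\left\langle\mathbb{S}_\gamma(\tau)\left(\partial_ub_\gamma(\tau)-\partial_x\sigma_\gamma(\tau)\partial_u\sigma_\gamma(\tau)\right)(v-\bar u(\tau)),v-\bar u(\tau)\right\rangle\lambda^*(\mathrm{d}\gamma)$, not the pure $\partial_u b_\gamma$ term you assert in step (2). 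The compensating contribution $+\frac12\left\langle\mathbb{S}_\gamma(\tau)\partial_x\sigma_\gamma(\tau)\partial_u\sigma_\gamma(\tau)(v-\bar u(\tau)),v-\bar u(\tau)\right\rangle$ does not come from Clark--Ocone: it is the quadratic covariation obtained by expanding $\Phi_\gamma(t)$ via \eqref{solu-form-ele-phi} inside the It\^o part and pairing the two stochastic integrals (the term $I_4=I_9+I_{10}$ in the paper's proof). Your step (3) accounts only for the $\Phi_\gamma(\tau)$ piece, which indeed yields the $\nabla\mathbb{S}_\gamma$ and $\nabla\bar u$ terms via Clark--Ocone; as written, your decomposition would produce \eqref{se-ne-thm-fi} with $\partial_u b_\gamma$ replaced by $\partial_ub_\gamma-\partial_x\sigma_\gamma\partial_u\sigma_\gamma$, which is not the theorem. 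You need the additional expansion of $\Phi_\gamma(t)$ and the resulting covariation term (together with the verification that the cross term $I_3$ involving the $\mathrm{d}s$-part of that expansion vanishes at order $\alpha^2$) to close the argument.
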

The proof of Theorem \ref{thm-point-nece-cla} is performed in Section \ref{seclapoisecpro}. 

\begin{rem}
    Note that there are cases satisfying Assumption $\mathrm{(H6)\ (i)}$, such as the linear quadratic optimal control problem with convex control constraints presented in \cite[Section 3.3]{zhanghaisenzhangxu2015siam}. See also Example 3.16 and Example 3.17 in \cite{zhanghaisenzhangxu2015siam}. \emph{(H6) (ii)} can be satisfied such as $\Gamma:=\mathbb{N}$ endowed with the metric inherited from the Euclidean.
    In general, any countable discrete space with the trivial topology $\mathsf{d} (\gamma,\gamma') = 1_{\gamma\not=\gamma'}$ setting satisfies \emph{(H6) (ii)}. 
\end{rem}

\begin{rem}
The Assumption \emph{(H6) (i)} about $\mathbb{L}_{2,\mathbb{F}}^{1,2}(\cdot)$ restrictions are mainly for guaranteeing the existence of certain Malliavin derivatives, while the $L^\infty$ restrictions are for Clark-Ocone theorem. Besides,  \emph{(H6) (ii)} is about the measurable of $\mathcal{D}_\cdot\mathbb{S}_\cdot(\cdot)$, with the help of the partitions of unity theorem.  
The constraints on Malliavin derivatives \emph{(H6)} are essentially the same as those on ordinary derivatives as \emph{(H3)} and \emph{(H4)} separately. 
We also have a complementary comment in appendix. 
\end{rem}

\begin{rem}
The contents in sections \ref{secprothfirinter} and \ref{seclapoisecpro} are inspired and provided on the basis of \cite{zhanghaisenzhangxu2015siam}, especially its utilization of tools in Malliavin calculus to deal with difficulties in the integrable order deficiency for multiple integrals of both Lebesgue and It\^o types, which are followed by lots of works studying the second-order necessary optimality conditions (cf. \cite{luqi2016conference,luqizhanghaisenzhangxu2021siam,zhanghaisenzhangxu2017siam,zhanghaisenzhangxu2018siamreview,zhanghaisenzhangxu2016scm}). 
It is natural to study a robust version of their works further. To achieve the purpose, technically, we need to establish the measurability and uniform regularity of the second-order derivatives of the value function and related items arising from it. Besides, we should perform weak convergence arguments within second-order nonlinear setting, 
and establish lemmas of Lebesgue differentiation and Malliavin approximation tailored to our model.
\end{rem}

\section{Motivating example}\label{secexam}
In this section, we apply the main results to an example to show the effectiveness of our results in  narrowing the candidate sets of optimal controls when the classical maximum principle can not provide useful information.
 
To present the example as straightforward as possible while it works,  let $m=n=1, T=1, U=[-1,1]$, $\Gamma=\{1,2\}$, $\Lambda=\{\lambda^ \mathfrak{i} | \mathfrak{i} \in[0,1]\}$, and $\lambda^\mathfrak{i} (\{1\})= \mathfrak{i} , \lambda^\mathfrak{i} (\{2\})=1- \mathfrak{i} $.
The control systems corresponding to uncertainty parameter $\gamma=1,2$ are respectively
\begin{equation*}
\left\{
\begin{aligned}
&\mathrm{d}x_1(t)= u(t)\mathrm{d}t+u(t)\mathrm{d}W(t), \quad t\in[0,1],         \\
&x_1(0)=0,
\end{aligned}
\right.
\quad  \text{and} \quad 
\left\{
\begin{aligned}
&\mathrm{d}x_2(t)=u(t)\mathrm{d}t, \quad t\in[0,1],           \\
&x_2(0)=0.
\end{aligned}
\right.
\end{equation*}
Take 
\begin{align*}
    f_1(t,x_1(t),u(t)) = \frac{1}{2}|u(t)|^2,   &\quad  
    f_2(t,x_2(t),u(t)) =  \frac{1}{4}|u(t)|^4 ,  
    \\ 
    h_1(x_1(1))=-\frac{1}{2}|x_1(1)|^2, &\quad h_2(x_2(1))=-\frac{1}{2}|x_2(1)|^2. 
\end{align*}
Then the cost functional is given by
\begin{align*}
  J(u(\cdot))&=\sup_{\lambda^ \mathfrak{i} \in\Lambda}\int_{\Gamma}\mathbb{E}\Big[\int_0^1 f_\gamma(t,x_\gamma(t),u(t))\mathrm{d}t
       +h_\gamma(x_\gamma(1))\Big]\lambda^\mathfrak{i} (\mathrm{d}\gamma) \\
       &=\sup_{ \mathfrak{i} \in[0,1]}\Big\{  \mathfrak{i} \Big(\frac{1}{2} \mathbb{E}\int_0^1|u(t)|^2\mathrm{d}t-\frac{1}{2}\mathbb{E}|x_1(1)|^2\Big)  +(1- \mathfrak{i} ) \Big(\frac{1}{4}\mathbb{E}\int_0^1|u(t)|^4\mathrm{d}t-\frac{1}{2}\mathbb{E}|x_2(1)|^2\Big) \Big\}.
\end{align*}

Correspondingly for $\gamma=1,2$, the adjoint equations are
\begin{equation*}
\left\{
\begin{aligned}
&\mathrm{d}P_{1,\gamma}(t)=Q_{1,\gamma}(t)\mathrm{d}W(t), \quad t\in[0,1],  \\
&P_{1,\gamma}(1)=0,
\end{aligned}
\right.
\quad 
\text{and}
\quad
\left\{
\begin{aligned}
&\mathrm{d}P_{2,\gamma}(t)=Q_{2,\gamma}(t)\mathrm{d}W(t), \quad t\in[0,1],  \\
&P_{2,\gamma}(1)=1.
\end{aligned}
\right.
\end{equation*}
Then
$$(P_{1,\gamma}(t),Q_{1,\gamma}(t))\equiv(0,0),\quad (P_{2,\gamma}(t),Q_{2,\gamma}(t))\equiv(1,0),\quad  t\in[0,1],\  \gamma=1,2.$$

Besides, the Hamiltonians  are separately 
\begin{equation*}\label{hamiltonianexa1}
\begin{aligned}
  H_{1}(t,x_1,u,p_{1,1},q_{1,1})&:=\langle p_{1,1},b_1(t,x_1,u)\rangle +\langle q_{1,1},\sigma_1(t,x_1,u)\rangle  -f_1(t,x_1,u)    =p_{1,1}u+q_{1,1}u-\frac{1}{2}u^2,
\end{aligned}
\end{equation*}
and
\begin{equation*}\label{hamiltonianexa2}
\begin{aligned}
  H_{2}(t,x_2,u,p_{1,2},q_{1,2})&:=\langle p_{1,2},b_2(t,x_2,u)\rangle +\langle q_{1,2},\sigma_2(t,x_2,u)\rangle  -f_2(t,x_2,u)    =p_{1,2}u -\frac{1}{4}u^4,
\end{aligned}
\end{equation*}
where $(t,x_\gamma,u,p_{1,\gamma},q_{1,\gamma})\in[0,1]\times\mathbb{R}\times [-1,1]\times\mathbb{R}\times\mathbb{R}$.

Choose $\bar{u}(\cdot) \equiv 0$ on $t\in[0,1]$. Then the corresponding states are $\bar{x}_1(\cdot)\equiv0, \bar{x}_2(\cdot)\equiv0$, and $J(\bar{u}(\cdot))=0$.

Then for $\gamma=1$ and a.e. $(t,\omega)\in[0,1]\times \Omega$,
\begin{align*}
&\partial_u H_{1}(t,\bar{x}_1(t),\bar{u}(t),P_{1,1}(t),Q_{1,1}(t))    =P_{1,1}(t)+Q_{1,1}(t)-\bar{u}(t)=0,   \\
&\partial_{uu} H_{1}(t,\bar{x}_1(t),\bar{u}(t),P_{1,1}(t),Q_{1,1}(t)) + \partial_u \sigma_1(t,\bar{x}_1(t),\bar{u}(t))^\top P_{2,1}(t) \partial_u \sigma_1(t,\bar{x}_1(t),\bar{u}(t))    =-1+1=0.
\end{align*}
Besides, for $\gamma=2$ and a.e. $(t,\omega)\in[0,1]\times \Omega$,
\begin{align*}
&\partial_u H_{2}(t,\bar{x}_2(t),\bar{u}(t),P_{1,2}(t),Q_{1,2}(t))    =P_{1,2}(t)-\bar{u}^3(t)=0,   \\
& \partial_{uu} H_{2}(t,\bar{x}_2(t),\bar{u}(t),P_{1,2}(t),Q_{1,2}(t)) + \partial_u \sigma_2(t,\bar{x}_2(t),\bar{u}(t))^\top P_{2,2}(t) \partial_u \sigma_2(t,\bar{x}_2(t),\bar{u}(t))    =0.
\end{align*}
Then
\begin{equation*}
\left\{
\begin{aligned}
&\int_\Gamma\partial_u H_\gamma(t,\bar{x}_\gamma(t),\bar{u}(t),P_{1,\gamma}(t),Q_{1,\gamma}(t))\lambda^\mathfrak{i} (\mathrm{d}\gamma)=0 \quad    a.s.\  a.e.\   t\in[0,1],\  \mathfrak{i} \in[0,1],
\\&\int_\Gamma\Big[\partial_{uu}H_{\gamma}(t,\bar{x}_\gamma(t),\bar{u}(t),P_{1,\gamma}(t),Q_{1,\gamma}(t))  +\partial_u \sigma_\gamma(t,\bar{x}_\gamma(t),\bar{u}(t))^\top \\
&\indent\indent\indent\indent  P_{2,\gamma}(t) \partial_u \sigma_\gamma(t,\bar{x}_\gamma(t),\bar{u}(t))\Big]\lambda^\mathfrak{i} (\mathrm{d}\gamma) =0  \quad    a.s.\  a.e.\   t\in[0,1],\  \mathfrak{i} \in[0,1].
\end{aligned}
\right.
\end{equation*}
That is to say, $\bar{u}(t)\equiv0, t\in[0,1]$ is a singular control in the classical sense in Definition~\ref{singularoptimalcontroldef}, and the classical maximum principle (first-order necessary conditions for optimality) is trivial in this case.

However, by taking $u(t)\equiv1, t\in[0,1]$,
\begin{align*}
  J(u(\cdot))& =\sup_{ \mathfrak{i} \in[0,1]}\Big\{  \mathfrak{i} \Big(\frac{1}{2}\mathbb{E}\int_0^1|u(t)|^2\mathrm{d}t-\frac{1}{2}\mathbb{E}|x_1(1)|^2\Big)   +(1- \mathfrak{i} ) \Big(\frac{1}{4}\mathbb{E}\int_0^1|u(t)|^4\mathrm{d}t-\frac{1}{2}\mathbb{E}|x_2(1)|^2\Big) \Big\}    \\
  & =\sup_{ \mathfrak{i} \in[0,1]}\Big\{ (-\frac{1}{2}) \mathfrak{i}  + (-\frac{1}{4})(1- \mathfrak{i} ) \Big\}   \le-\frac{1}{4}< 0 =J(\bar{u}(\cdot)).
\end{align*}
This implies that $\bar{u}(t)\equiv0, t\in[0,1]$ is not one of the optimal control.

Actually, the fact that $\bar{u}(t)\equiv0, t\in[0,1]$ is not one of the optimal control can be obtained by applying Theorem~\ref{thm-point-nece-cla},   showing that $\bar{u}(t)\equiv0, t\in[0,1]$ does not satisfy~\eqref{se-ne-thm-fi}.

Indeed, it can be verified that for any $t\in[0,1]$,
\begin{align*}
& \mathbb{S}_1(t)=\mathbb{S}_1(t,x_1(t),\bar{u}(t),P_{1,1}(t),Q_{1,1}(t),P_{2,1}(t),Q_{2,1}(t))\equiv1,  \\
& \mathbb{S}_2(t)=\mathbb{S}_2(t,x_2(t),\bar{u}(t),P_{1,2}(t),Q_{1,2}(t),P_{2,2}(t),Q_{2,2}(t))\equiv1,  \\
& \nabla\mathbb{S}_1(t)\equiv0,\quad  \nabla\mathbb{S}_2(t)\equiv0,\quad  \nabla\bar{u}(t)\equiv0.
\end{align*}
Take $v=1$ for example, then for almost all $(\tau,\omega)\in[0,1]\times \Omega$,
\begin{align*}
&\int_{\Gamma} \Big\langle\mathbb{S}_\gamma(\tau) \partial_u b_\gamma(\tau) (v-\bar{u}(\tau)),v-\bar{u}(\tau)\Big\rangle \lambda^*(\mathrm{d}\gamma)  \\
&\indent  + \int_{\Gamma} \Big\langle  \nabla\mathbb{S}_\gamma(\tau)  \partial_u\sigma_\gamma(\tau) (v-\bar{u}(\tau)), v-\bar{u}(\tau)\Big\rangle \lambda^*(\mathrm{d}\gamma)  \\
&\indent -\int_{\Gamma} \Big\langle  \mathbb{S}_\gamma(\tau) \partial_u\sigma_\gamma(\tau) (v-\bar{u}(\tau)),  \nabla\bar{u}(\tau)\Big\rangle \lambda^*(\mathrm{d}\gamma)  \\
&\quad =1,
\end{align*}
which is inconsistent with~\eqref{se-ne-thm-fi}.

\section{The proof of Theorem~\ref{thmfirornedes}}\label{secprothfirinter}
\begin{proof}[The proof of Theorem~\ref{thmfirornedes}]
The proofs are divided into two steps.
Step 1 primarily incorporates variational analysis and dual principles, while Step 2 mainly involves weak convergence analysis as utilized in \cite{hlw23,humingshangwangfalei2020siam} generalized to this singular setting. Both steps rely significantly on the regularities with respect to the uncertainty parameters estimated in Section \ref{seformmainres}.

\textbf{Step 1:}  \
In this step we prove that
\begin{equation}\label{delt-j-low-limit-eps}
  \begin{aligned}
  &\mathop{\underline{\lim}}_{\varepsilon\to 0} \frac{1}{\varepsilon^2} \left[J(u^\varepsilon(\cdot))-J(\bar{u}(\cdot))\right]  \geq\sup_{\lambda\in\Lambda^{\bar{u}}}\Big\{-\int_{\Gamma} \mathbb{E}\int_0^T \left\langle\mathbb{S}_\gamma(t)y_{1,\gamma}(t),v(t)\right\rangle \mathrm{d}t  \lambda(\mathrm{d}\gamma)\Big\}.
  \end{aligned}
\end{equation}

Take $\lambda\in\Lambda^{\bar{u}}$,
then
$J(\bar{u}(\cdot))=\int_{\Gamma}\mathbb{E} \big[\int_0^Tf_\gamma(t,\bar{x}_\gamma(t),\bar{u}(t))\mathrm{d}t
       +h_\gamma(\bar{x}_\gamma(T))\big]\lambda(\mathrm{d}\gamma)$
and
$J(u^\varepsilon(\cdot))\geq\int_{\Gamma}\mathbb{E} \big[\int_0^Tf_\gamma(t,x_\gamma^\varepsilon(t),u^\varepsilon(t))\mathrm{d}t
       +h_\gamma(x_\gamma^\varepsilon(T))\big]\lambda(\mathrm{d}\gamma).$
Then
\begin{equation}\label{J-epsi-J-bar-left--1}
\begin{aligned}
&J(u^\varepsilon(\cdot))-J(\bar{u}(\cdot)) \geq \int_{\Gamma}\Big\{\mathbb{E} \int_0^T \left[f_\gamma(t,x_\gamma^\varepsilon(t),u^\varepsilon(t)) -f_\gamma(t,\bar{x}_\gamma(t),\bar{u}(t))\right]\mathrm{d}t \\
&\indent\indent\indent\indent\indent\indent\indent +\mathbb{E}\left[h_\gamma(x_\gamma^\varepsilon(T)) -h_\gamma(\bar{x}_\gamma(T))\right]\Big\}\lambda(\mathrm{d}\gamma).
\end{aligned}
\end{equation}
Moreover,  by~\eqref{est-sup-theta-f-h-ep-six1} and~\eqref{est-sup-theta-f-h-ep-six2}, we get the following Taylor expansion of the difference of the cost functional
  \begin{align} \label{J-epsi-J-bar-left--2}
& \int_{\Gamma}\Big\{\mathbb{E} \int_0^T \left[f_\gamma(t,x_\gamma^\varepsilon(t),u^\varepsilon(t)) -f_\gamma(t,\bar{x}_\gamma(t),\bar{u}(t))\right]\mathrm{d}t   +\mathbb{E}\left[h_\gamma(x_\gamma^\varepsilon(T)) -h_\gamma(\bar{x}_\gamma(T))\right]\Big\}\lambda(\mathrm{d}\gamma)  \notag  \\
&\indent  =\int_{\Gamma}\Big\{\mathbb{E}\int_0^T\Big[\langle\partial_x f_{\gamma}(t),\delta x_{\gamma}(t)\rangle +\varepsilon \langle\partial_u f_{\gamma}(t),v(t)\rangle +\varepsilon\langle\partial_{xu}f_{\gamma}(t)\delta x_{\gamma}(t),v(t)\rangle \notag \\
&\indent\indent\indent\indent \ +\frac{1}{2}\langle\partial_{xx}f_\gamma(t)\delta x_\gamma(t),\delta x_{\gamma}(t)\rangle
+\frac{{\varepsilon}^2}{2}\langle\partial_{uu}f_{\gamma}(t)v(t),v(t)\rangle\Big]\mathrm{d}t \notag \\
&\quad\indent\quad +\mathbb{E}\Big[\langle\partial_x h_\gamma(\bar{x}_\gamma(T)), \delta x_\gamma(T)\rangle +\frac{1}{2}\langle\partial_{xx}h_\gamma(\bar{x}_\gamma(T))\delta x_\gamma(T),\delta x_\gamma(T)\rangle\Big]\Big\}\lambda(\mathrm{d}\gamma)  +\int_{\Gamma} o(\varepsilon_\gamma^2) \lambda(\mathrm{d}\gamma)  \notag 
\\  
&\indent =\int_{\Gamma}\Big\{\mathbb{E}\int_0^T\Big[\varepsilon\langle\partial_x f_{\gamma}(t),y_{1,\gamma}(t)\rangle +\frac{\varepsilon^2}{2} \langle\partial_x f_{\gamma}(t),y_{2,\gamma}(t)\rangle +\varepsilon\langle\partial_{u}f_{\gamma}(t),v(t)\rangle \notag \\
&\indent\indent\indent\indent \ +\frac{\varepsilon^2}{2} \big( \langle\partial_{xx}f_\gamma(t)y_{1,\gamma}(t),y_{1,\gamma}(t)\rangle  +2 \langle\partial_{xu}f_\gamma(t)y_{1,\gamma}(t),v(t)\rangle \notag  \\
&\indent\indent\indent\indent \ +\langle\partial_{uu}f_{\gamma}(t)v(t),v(t)\rangle\big)\Big]\mathrm{d}t  \\
&\quad\indent\indent +\mathbb{E}\Big[\varepsilon\langle\partial_x h_\gamma(\bar{x}_\gamma(T)), y_{1,\gamma}(T)\rangle +\frac{\varepsilon^2}{2}\langle\partial_{xx}h_\gamma(\bar{x}_\gamma(T))y_{1,\gamma}(T), y_{1,\gamma}(T)\rangle \notag  \\
&\indent\indent\indent\  +\frac{\varepsilon^2}{2}\langle\partial_x h_\gamma(\bar{x}_\gamma(T)), y_{2,\gamma}(T)\rangle\Big]\Big\}\lambda(\mathrm{d}\gamma) +o(\varepsilon^2),   \quad    \varepsilon\to0. \notag
  \end{align}
The last equality makes use of   Proposition~\ref{lem-ele-y1y2-delta-x-esti} to approximate $\delta x_\gamma$ by $y_{1,\gamma}$ and $y_{2,\gamma}$, and to transform 
$\int_{\Gamma} o(\varepsilon_\gamma^2) \lambda(\mathrm{d}\gamma)$
into $\int_{\Gamma} o(\varepsilon^2) \lambda(\mathrm{d}\gamma)$.

Now we utilize the duality arguments with the help of the two adjoint equations introduced in \eqref{adjoint-equa-first} and \eqref{adjoint-equa-second}  to get rid of  terms $\langle y_{1,\gamma},y_{1,\gamma}\rangle $ and $y_{2,\gamma}$. It can be verified directly by It\^{o}'s formula that for any $\gamma\in\Gamma$,
\begin{equation}\label{equa-ito-formula-firs}
\begin{aligned}
&  \mathbb{E}\langle \partial_x h_\gamma(\bar{x}_\gamma(T)),y_{1,\gamma}(T) \rangle
= -\mathbb{E}\langle P_{1,\gamma}(T),y_{1,\gamma}(T)\rangle    \\
& \indent
=  -\mathbb{E}\int_0^T \big[ \langle P_{1,\gamma}(t),\partial_u b_\gamma(t)v(t)\rangle +\langle Q_{1,\gamma}(t),\partial_u\sigma_\gamma(t)v(t)\rangle +\langle\partial_x f_\gamma(t),y_{1,\gamma}(t)\rangle \big]\mathrm{d}t,
\end{aligned}
\end{equation}
\begin{align}\label{equa-ito-formula-second}
&  \mathbb{E}\langle \partial_x h_\gamma(\bar{x}_\gamma(T)),y_{2,\gamma}(T) \rangle  = -\mathbb{E}\langle P_{1,\gamma}(T),y_{2,\gamma}(T)\rangle  \notag  \\
& \indent = -\mathbb{E}\int_0^T \Big[\langle P_{1,\gamma}(t),y_{1,\gamma}(t)^\top\partial_{xx}b_\gamma(t)y_{1,\gamma}(t)\rangle +2\langle P_{1,\gamma}(t),v(t)^\top\partial_{xu}b_\gamma(t)y_{1,\gamma}(t)\rangle   \notag \\
& \indent\indent\indent +\langle P_{1,\gamma}(t),v(t)^\top\partial_{uu}b_\gamma(t)v(t)\rangle  +\langle Q_{1,\gamma}(t),y_{1,\gamma}(t)^\top\partial_{xx}\sigma_\gamma(t)y_{1,\gamma}(t)\rangle    \\
& \indent\indent\indent +2\langle Q_{1,\gamma}(t),v(t)^\top\partial_{xu}\sigma_\gamma(t)y_{1,\gamma}(t)\rangle
+\langle Q_{1,\gamma}(t),v(t)^\top\partial_{uu}\sigma_\gamma(t)v(t)\rangle  +\langle\partial_x f_\gamma(t),y_{2,\gamma}(t)\rangle\Big]\mathrm{d}t,  \notag 
\end{align}
and
\begin{align}\label{equa-ito-formula-third}
&  \mathbb{E}\langle \partial_{xx} h_\gamma(\bar{x}_\gamma(T))y_{1,\gamma}(T),y_{1,\gamma}(T)\rangle
 = -\mathbb{E}\langle P_{2,\gamma}(T)y_{1,\gamma}(T),y_{1,\gamma}(T)\rangle  \notag  \\
& \indent =-\mathbb{E}\int_0^T \Big[ \langle P_{2,\gamma}(t)y_{1,\gamma}(t),\partial_u b_\gamma(t)v(t)\rangle +\langle P_{2,\gamma}(t)\partial_u b_\gamma(t)v(t),y_{1,\gamma}(t)\rangle  \notag  \\
& \indent\indent\indent\indent   +\langle P_{2,\gamma}(t)\partial_x\sigma_\gamma(t)y_{1,\gamma}(t),\partial_u\sigma_\gamma(t)v(t)\rangle
+\langle P_{2,\gamma}(t)\partial_u\sigma_\gamma(t)v(t),\partial_x\sigma_\gamma(t)y_{1,\gamma}(t)\rangle \notag \\
& \indent\indent\indent\indent  +\langle Q_{2,\gamma}(t)\partial_u\sigma_\gamma(t)v(t), y_{1,\gamma}(t)\rangle
+\langle Q_{2,\gamma}(t)y_{1,\gamma}(t),\partial_u\sigma_\gamma(t)v(t)\rangle \notag \\
&\indent\indent\indent\indent  +\langle P_{2,\gamma}(t)\partial_u\sigma_\gamma(t)v(t),\partial_u\sigma_\gamma(t)v(t)\rangle
-\langle\partial_{xx}H_\gamma(t) y_{1,\gamma}(t),y_{1,\gamma}(t)\rangle \Big]\mathrm{d}t  \\
& \indent =-\mathbb{E}\int_0^T \Big[ 2\langle P_{2,\gamma}(t)y_{1,\gamma}(t),\partial_ub_\gamma(t)v(t)\rangle +2\langle Q_{2,\gamma}(t)\partial_u\sigma_\gamma(t)v(t), y_{1,\gamma}(t)\rangle  \notag \\
&\indent\indent\indent\indent  +\langle P_{2,\gamma}(t)\partial_u\sigma_\gamma(t)v(t),\partial_u\sigma_\gamma(t)v(t)\rangle
-\langle\partial_{xx}H_\gamma(t) y_{1,\gamma}(t),y_{1,\gamma}(t)\rangle  \notag\\
& \indent\indent\indent\indent   +2\langle P_{2,\gamma}(t)\partial_x\sigma_\gamma(t)y_{1,\gamma}(t),\partial_u\sigma_\gamma(t)v(t)\rangle \Big]\mathrm{d}t. \notag
\end{align}

Then by the estimates \eqref{J-epsi-J-bar-left--2}-\eqref{equa-ito-formula-third}, it derives 
\begin{align}\label{J-epsi-J-bar-left--3}
  & \int_{\Gamma}\Big\{\mathbb{E} \int_0^T \left[f_\gamma(t,x_\gamma^\varepsilon(t),u^\varepsilon(t)) -f_\gamma(t,\bar{x}_\gamma(t),\bar{u}(t))\right]\mathrm{d}t  +\mathbb{E}\left[h_\gamma(x_\gamma^\varepsilon(T)) -h_\gamma(\bar{x}_\gamma(T))\right]\Big\}\lambda(\mathrm{d}\gamma)\notag \\
  &\indent  =-\int_{\Gamma}\Big\{\mathbb{E}\int_0^T\Big[ \varepsilon\big(\left\langle P_{1,\gamma}(t),\partial_u b_\gamma(t)v(t)\right\rangle +\left\langle Q_{1,\gamma}(t),\partial_u \sigma_\gamma(t)v(t)\right\rangle  -\langle\partial_u f_\gamma(t)v(t)\rangle\big)  \notag  \\
  &\indent\indent\indent\indent\indent   +\frac{\varepsilon^2}{2}\Big(\left\langle P_{1,\gamma}(t),v(t)^\top\partial_{uu} b_\gamma(t)v(t)\right\rangle+\left\langle Q_{1,\gamma}(t),v(t)^\top\partial_{uu} \sigma_\gamma(t)v(t)\right\rangle  \notag  \\
  &\indent\indent\indent\indent\indent\indent +\left\langle P_{2,\gamma}(t)\partial_u\sigma_\gamma(t)v(t),\partial_u\sigma_\gamma(t)v(t)\right\rangle -\langle\partial_{uu}f_\gamma(t)v(t),v(t)\rangle\Big) \notag 
\\  
&\indent\indent\indent\indent\indent  +\varepsilon^2\Big(\left\langle P_{1,\gamma}(t),v(t)^\top\partial_{xu} b_\gamma(t)y_{1,\gamma}(t)\right\rangle +\left\langle Q_{1,\gamma}(t),v(t)^\top\partial_{xu} \sigma_\gamma(t)y_{1,\gamma}(t)\right\rangle  \notag  \\&\indent\indent\indent\indent\indent\indent
-\left\langle\partial_{xu}f_\gamma(t)y_{1,\gamma}(t),v(t)\right\rangle +\left\langle\partial_u b_\gamma(t)^\top P_{2,\gamma}(t)y_{1,\gamma}(t),v(t)\right\rangle  \notag  \\&
\indent\indent\indent\indent\indent\indent   +\left\langle\partial_u\sigma_\gamma(t)^\top P_{2,\gamma}(t)\partial_x\sigma_\gamma(t)y_{1,\gamma}(t),v(t)\right\rangle  \notag \\&
\indent\indent\indent\indent\indent\indent   +\left\langle\partial_u\sigma_\gamma(t)^\top Q_{2,\gamma}(t)y_{1,\gamma}(t),v(t)\right\rangle\Big)
\Big]\mathrm{d}t\Big\}\lambda(\mathrm{d}\gamma) +o(\varepsilon^2) \notag \\
&\indent =-\int_{\Gamma}\Big\{\mathbb{E}\int_0^T\Big[\varepsilon\langle\partial_u H_\gamma(t),v(t)\rangle
+\varepsilon^2\left\langle\mathbb{S}_\gamma(t)y_{1,\gamma}(t),v(t)\right\rangle \notag  \\
& \indent\indent\indent\indent
+\frac{\varepsilon^2}{2}\left\langle\left(\partial_{uu}H_\gamma(t)+\partial_u\sigma_\gamma(t)^\top P_{2,\gamma}(t) \partial_u\sigma_\gamma(t)\right)v(t),v(t)\right\rangle  \Big]\mathrm{d}t\Big\}\lambda(\mathrm{d}\gamma)  \\
&\indent\indent  +o(\varepsilon^2), \quad    \varepsilon\to 0.  \notag
\end{align}

Recalling the definition of singular optimal control in Definition~\ref{singularoptimalcontroldef},  by~\eqref{J-epsi-J-bar-left--3}, we obtain 
\begin{align}\label{J-epsi-J-bar-left--4}
&\lim_{\varepsilon\to 0} \frac{1}{\varepsilon^2} \int_{\Gamma}\Big\{\mathbb{E} \int_0^T \left[f_\gamma(t,x_\gamma^\varepsilon(t),u^\varepsilon(t)) -f_\gamma(t,\bar{x}_\gamma(t),\bar{u}(t))\right]\mathrm{d}t +\mathbb{E}\left[h_\gamma(x_\gamma^\varepsilon(T)) -h_\gamma(\bar{x}_\gamma(T))\right]\Big\}\lambda(\mathrm{d}\gamma)   \\
&\indent =  -\int_{\Gamma} \mathbb{E}\int_0^T
 \left\langle\mathbb{S}_\gamma(t)y_{1,\gamma}(t),v(t)\right\rangle \mathrm{d}t \lambda(\mathrm{d}\gamma). \notag 
\end{align}
Since~\eqref{J-epsi-J-bar-left--1} and~\eqref{J-epsi-J-bar-left--4}
holds for any $\lambda\in\Lambda^{\bar{u}}$,  \eqref{delt-j-low-limit-eps} holds.

\textbf{Step 2:}  \
In this step, we prove the existence of $\lambda^*\in\Lambda^{\bar{u}}$ satisfying 
\begin{equation}\label{delt-j-up-limit}
  \begin{aligned}
  &   \mathop{\overline{\lim}}_{\varepsilon\to 0}\frac{1}{\varepsilon^2} \left[J(u^\varepsilon(\cdot))-J(\bar{u}(\cdot))\right]    \le -\int_{\Gamma}\big[\mathbb{E}\int_0^T \left\langle\mathbb{S}_\gamma(t)y_{1,\gamma}(t),v(t)\right\rangle \mathrm{d}t    \big]\lambda^*(\mathrm{d}\gamma).
  \end{aligned}
  \end{equation}

In the singular optimal control case, we can choose a sub-sequence $\varepsilon_N\rightarrow 0$, such that
\begin{equation*}
  \mathop{\overline{\lim}}_{\varepsilon\to 0}\frac{1}{\varepsilon^2} [J(u^\varepsilon(\cdot))-J(\bar{u}(\cdot))]= \lim_{N\to \infty} \frac{1}{\varepsilon_N^2} [J(u^{\varepsilon_N}(\cdot))-J(\bar{u}(\cdot))].
\end{equation*}
Moreover, for each $N\ge1$, since $\Lambda^{u^{\varepsilon_N}}$ is nonempty,  we can choose $\lambda^{\varepsilon_N}\in\Lambda^{u^{\varepsilon_N}}$ such that
\[J(u^{\varepsilon_N}(\cdot))=\int_{\Gamma}\mathbb{E} \Big[\int_0^T f_\gamma(t,x_\gamma^{\varepsilon_N}(t),u^{\varepsilon_N}(t))\mathrm{d}t
       +h_\gamma(x_\gamma^{\varepsilon_N}(T))\Big]\lambda^{\varepsilon_N}(\mathrm{d}\gamma).\]
Meanwhile, by the definition of $J(\bar{u}(\cdot))$, it deduces that 
\[J(\bar{u}(\cdot))\geq\int_{\Gamma}\mathbb{E} \Big[\int_0^T f_\gamma(t,\bar{x}_\gamma(t),\bar{u}(t))\mathrm{d}t
       +h_\gamma(\bar{x}_\gamma(T))\Big]\lambda^{\varepsilon_N}(\mathrm{d}\gamma).\]
Together with~\eqref{wpec} and \eqref{lem-cost-continuous-theta-theta-pron-descri}, under Assumption (H2),    
there exists a $\lambda^*\in\Lambda$ such that ${(\lambda^{\varepsilon_N})}_{N\geq1}$ converges weakly to $\lambda^*$ (choosing a sub-sequence if necessary but still denoting as  ${(\lambda^{\varepsilon_N})}_{N\geq1}$ for simplicity).  
Moreover, we claim that $\lambda^*\in\Lambda^{\bar{u}}$. 
Actually,
  \begin{align*}
\lim_{N\to\infty} \left|J(u^{\varepsilon_N}(\cdot))-J(\bar{u}(\cdot))\right| 
&
\leq \lim_{N\to\infty}\Big| \int_{\Gamma} \Big\{\mathbb{E} \big[\int_0^T f_\gamma(t,x_\gamma^{\varepsilon_N}(t),u^{\varepsilon_N}(t))\mathrm{d}t
       +h_\gamma(x_\gamma^{\varepsilon_N}(T)) \big]   \\
 &\indent\indent\indent\indent      -\mathbb{E} \big[\int_0^T f_\gamma(t,\bar{x}_\gamma(t),\bar{u}(t))\mathrm{d}t
       +h_\gamma(\bar{x}_\gamma(T))\big] \Big\}\lambda^{\varepsilon_N}(\mathrm{d}\gamma)  \Big|  \\
&  \le \lim_{N\to\infty} \sup_{\gamma\in\Gamma} \Big|\mathbb{E} \big[\int_0^T f_\gamma(t,x_\gamma^{\varepsilon_N}(t),u^{\varepsilon_N}(t))\mathrm{d}t
       +h_\gamma(x_\gamma^{\varepsilon_N}(T)) \big]   \\
 &\indent\indent\indent\indent - \mathbb{E} \big[\int_0^T f_\gamma(t,\bar{x}_\gamma(t),\bar{u}(t))\mathrm{d}t
       +h_\gamma(\bar{x}_\gamma(T))\big] \Big|
 \\&    =0,
  \end{align*}
which further implies that
  \begin{align*}
 J(\bar{u}(\cdot))&= \lim_{N\to\infty} J(u^{\varepsilon_N}(\cdot))   =
\lim_{N\to\infty}  \int_{\Gamma}\mathbb{E} \big[\int_0^T f_\gamma(t,x_\gamma^{\varepsilon_N}(t),u^{\varepsilon_N}(t))\mathrm{d}t
       +h_\gamma(x_\gamma^{\varepsilon_N}(T))\big]\lambda^{\varepsilon_N}(\mathrm{d}\gamma)  \\
&  =\lim_{N\to\infty}  \int_{\Gamma}  \mathbb{E} \big[\int_0^T f_\gamma(t,\bar{x}_\gamma(t),\bar{u}(t))\mathrm{d}t
       +h_\gamma(\bar{x}_\gamma(T))\big]\lambda^{\varepsilon_N}(\mathrm{d}\gamma)    \\
&  =  \int_{\Gamma}  \mathbb{E} \big[\int_0^Tf_\gamma(t,\bar{x}_\gamma(t),\bar{u}(t))\mathrm{d}t
       +h_\gamma(\bar{x}_\gamma(T))\big]\lambda^*(\mathrm{d}\gamma),
  \end{align*}
indicating that $\lambda^*\in\Lambda^{\bar{u}}$.
On the other hand,
\begin{align*}
J(u^{\varepsilon_N}(\cdot))-J(\bar{u}(\cdot))  
&
\leq  \int_{\Gamma} \Big\{\mathbb{E} \big[\int_0^Tf_\gamma(t,x_\gamma^{\varepsilon_N}(t),u^{\varepsilon_N}(t))\mathrm{d}t
       +h_\gamma(x_\gamma^{\varepsilon_N}(T)) \big]   \\
 &\indent\indent    -
       \mathbb{E} \big[\int_0^T f_\gamma(t,\bar{x}_\gamma(t),\bar{u}(t))\mathrm{d}t
       +h_\gamma(\bar{x}_\gamma(T))\big] \Big\}\lambda^{\varepsilon_N}(\mathrm{d}\gamma) 
       \\  &   
       =:I_1, 
\end{align*}
while  
\begin{equation}\label{exftel10}
\begin{aligned}
&  
     I_1 = -\int_{\Gamma} \mathbb{E}\int_0^T\Big\{
    \varepsilon_N\langle\partial_u H_\gamma(t),v(t)\rangle
+\varepsilon_N^2\left\langle\mathbb{S}_\gamma(t)y_{1,\gamma}(t),v(t)\right\rangle   \\
& \indent\indent\indent\indent\indent
+\frac{\varepsilon_N^2}{2}\left\langle\left(\partial_{uu}H_\gamma(t)+\partial_u\sigma_\gamma(t)^\top P_{2,\gamma}(t) \partial_u\sigma_\gamma(t)\right)v(t),v(t)\right\rangle  \Big\}\mathrm{d}t \lambda^{\varepsilon_N}(\mathrm{d}\gamma) \\
&\indent\indent\indent +o(\varepsilon_N^2), \quad  \varepsilon_N\to 0^+.
\end{aligned}
\end{equation}
By the definition of the singular optimal control and recall the weak convergence of the probability measure sequence ${\{\lambda^{\varepsilon_N}\}}_{N\ge1}$, it derives that 
\begin{equation}\label{ewuyrtuewio}
\begin{aligned}
\lim_{N\to\infty} \int_{\Gamma} \mathbb{E}\int_0^T
   \left\langle\left(\partial_{uu}H_\gamma(t)+\partial_u\sigma_\gamma(t)^\top P_{2,\gamma}(t) \partial_u\sigma_\gamma(t)\right)v(t),v(t)\right\rangle \mathrm{d}t   \lambda^{\varepsilon_N}(\mathrm{d}\gamma)=0.
\end{aligned}
\end{equation}
Besides, we claim that 
\begin{equation}\label{tec-epsi-firs-delt-j-limi}
\begin{aligned}
 \lim_{N\to\infty}  \frac{1}{\varepsilon_N}  \int_{\Gamma} \mathbb{E}\int_0^T \langle\partial_u H_\gamma(t),v(t)\rangle \mathrm{d}t \lambda^{\varepsilon_N}(\mathrm{d}\gamma)=0.
\end{aligned}
\end{equation}
Indeed, since $\lambda^*\in \Lambda^{\bar{u}}$ as just proved, we have 
   $\int_{\Gamma} \mathbb{E}\int_0^T {\langle\partial_u H_\gamma(t),v(t)\rangle} \mathrm{d}t \lambda^*(\mathrm{d}\gamma)=0$
from the definition of singular optimal control, 
and then obviously 
 $\lim_{N\to\infty}  \frac{1}{\varepsilon_N}  \int_{\Gamma} \mathbb{E}\int_0^T \langle\partial_u H_\gamma(t),v(t)\rangle \mathrm{d}t \lambda^*(\mathrm{d}\gamma)=0$.
As a result, to prove \eqref{tec-epsi-firs-delt-j-limi}, it is sufficient to prove 
     $\lim_{N\to\infty}  \frac{1}{\varepsilon_N}  \int_{\Gamma} \mathbb{E}\int_0^T \langle\partial_u H_\gamma(t),v(t)\rangle \mathrm{d}t (\lambda^{\varepsilon_N} - \lambda^*)(\mathrm{d}\gamma)=0$.
Actually, notice the facts that 
\begin{align*}
    &  \frac{1}{\varepsilon_N}  \int_{\Gamma}\Big\{\mathbb{E}\int_0^T \langle\partial_u H_\gamma(t),v(t)\rangle \mathrm{d}t \Big\}\left(\lambda^{\varepsilon_{N}}-\lambda^*\right)(\mathrm{d}\gamma)    \le  \frac{\left(\lambda^{\varepsilon_{N}}-\lambda^*\right)(\Gamma)}{\varepsilon_N}   \sup_{\gamma\in\Gamma}\Big\{\mathbb{E}\int_0^T \left\langle\partial_u H_\gamma(t),v(t)\right\rangle \mathrm{d}t \Big\},
\end{align*}
where ${\left(\lambda^{\varepsilon_{N}}-\lambda^*\right)(\Gamma)}/{\varepsilon_N}\equiv0$, and, as the proof for \eqref{essthete},  $\sup_{\gamma\in\Gamma} |\mathbb{E}\int_0^T \left\langle\partial_u H_\gamma(t),v(t)\right\rangle \mathrm{d}t |<\infty$ for $\beta \ge 4$.
Then~\eqref{tec-epsi-firs-delt-j-limi} is obtained.

Now \eqref{delt-j-up-limit} follows from~\eqref{exftel10}-\eqref{tec-epsi-firs-delt-j-limi}.

Note that the measure $\lambda^*(\mathrm{d}\gamma)$ is dependent on the given admissible control $u$. 
From~\eqref{delt-j-low-limit-eps}  together with~\eqref{delt-j-up-limit}, for any $u(\cdot)\in\mathcal{U}^4$,  there exists $\lambda^*(u;\mathrm{d}\gamma)\in \Lambda^{\bar{u}}$, such that
\begin{equation*}
\begin{aligned}
\mathop{\lim}_{\varepsilon\to 0}\frac{1}{\varepsilon^2} \left[J(u^\varepsilon(\cdot))-J(\bar{u}(\cdot))\right]  =  -\int_{\Gamma}\mathbb{E}\int_0^T \left\langle\mathbb{S}_\gamma(t)y_{1,\gamma}(t),v(t)\right\rangle \mathrm{d}t \lambda^*(u;\mathrm{d}\gamma).
\end{aligned}
\end{equation*}
By the optimality of $\bar{u}$, and recalling the notations in \eqref{notaip}, it derives 
\begin{equation}\label{dfjb}
  \begin{aligned}
    \mathfrak{I} (v;\lambda^*(u;\mathrm{d}\gamma)) & = -\int_{\Gamma} \mathbb{E}\int_0^T \left\langle\mathbb{S}_\gamma(t)y_{1,\gamma}(t), v(t)\right\rangle \mathrm{d}t \lambda^*(u;\mathrm{d}\gamma) 
    \ge 0. 
\end{aligned}
\end{equation}
The proof is completed. 
\end{proof}
Based on Theorem \ref{thmfirornedes}, now we are ready to give 
\begin{proof}[{The proof of Corollary \ref{thmfirornedesf}}]
The main content is to derive the variational inequality with a common reference probability for any admissible control. 
The inequality \eqref{dfjb} yields
\begin{equation}\label{isdg}
    \begin{aligned}
\inf_{u(\cdot)\in\mathcal{U}^4}   \sup_{\lambda^*(\mathrm{d}\gamma)\in \Lambda^{\bar{u}}} \mathfrak{I} (v;\lambda^*(\mathrm{d}\gamma)) \ge 0.
\end{aligned}
\end{equation} 
On the other hand, by the estimations for $y_{1,\gamma}$ in Propositon \ref{lem-ele-y1y2-delta-x-esti}  and for $\mathbb{S}_\gamma$ in Proposition \ref{pr-su-co-listhet}, for any $v_1, v_2 \in L_\mathbb{F}^4 (\Omega; L^4 ([0,T]; \mathbb{R}^m))$, we get 
\begin{align}\label{czlj}
&|\mathfrak{I} (v_1;\lambda^*) -\mathfrak{I} (v_2;\lambda^*)|    \notag 
\\&
\indent 
=\Big|-\int_{\Gamma} \mathbb{E}\int_0^T \left\langle\mathbb{S}_\gamma(t)y_{1,\gamma}(t;v_1), v_1(t)\right\rangle \mathrm{d}t \lambda^*(u;\mathrm{d}\gamma)   \notag 
\\& 
\indent\indent  
+\int_{\Gamma} \mathbb{E}\int_0^T \left\langle\mathbb{S}_\gamma(t)y_{1,\gamma}(t;v_2), v_2(t)\right\rangle \mathrm{d}t \lambda^*(u;\mathrm{d}\gamma)\Big|    \notag 
\\&
\indent 
\le  \int_{\Gamma} \mathbb{E} \int_0^T |\mathbb{S}_\gamma(t)| |y_{1,\gamma}(t;v_1-v_2)| |v_1| \mathrm{d}t  \lambda^*(u;\mathrm{d}\gamma)
\\&
\indent \indent 
+  \int_{\Gamma} \mathbb{E} \int_0^T |\mathbb{S}_\gamma(t)| |y_{1,\gamma}(t;v_2)| |v_1-v_2| \mathrm{d}t  \lambda^*(u;\mathrm{d}\gamma)  \notag 
\\&\indent 
\lesssim \sup_{\gamma \in\Gamma }\Big(\E \int_{0}^{T}|\mathbb{S}_\gamma(t)|^2 \dif t \Big)^{\frac{1}{2} } \Big(\E \int_{0}^{T}|v_1-v_2|^4 \dif t \Big)^{\frac{1}{4} }\Big[\Big(\E \int_{0}^{T}|v_1|^4 \dif t \Big)^{\frac{1}{4} }+\Big(\E \int_{0}^{T}|v_2|^4 \dif t \Big)^{\frac{1}{4} }\Big],  \notag 
\end{align}
where we interpolate one term $\left\langle\mathbb{S}_\gamma(t)y_{1,\gamma}(t;v_2), v_1(t)\right\rangle$ utilizing the linearity of $y$ in $v$ as shown by \eqref{lirephy}, then estimate with triangular inequality, Cauchy inequality and H\"older's inequality.
The above estimates result in the continuity of $\mathfrak{I} (v;\lambda^*)$ in $v$. 
Besides, $\Lambda$ is convex and weakly compact from (H2), and it holds similarly for $\Lambda^{\bar{u}} $. 
Furthermore, $\mathfrak{I} (\tilde{u};\cdot)$ is naturally linear with respect to $\lambda^*(\mathrm{d}\gamma) \in \Lambda^{\bar{u}}$ for any given $\tilde{u}$. 
What's more,  the monotonicity condition in (H5)
yield the conclusion that for any $u_1(\cdot), u_2(\cdot) \in L_\mathbb{F}^4 (\Omega; L^4 ([0,T]; \mathbb{R}^m))$ and $\lambda^* \in \Lambda$, 
\[\mathfrak{I} (u_1-u_2;\lambda^* ) = - \int_{\Gamma} \mathbb{E}\int_0^T \left\langle\mathbb{S}_\gamma(t)y_{1,\gamma}(t;u_1-u_2), u_1(t)-u_2(t)\right\rangle \mathrm{d}t \lambda^* (\mathrm{d}\gamma) \ge 0.\]
It implies that for any $\kappa \in(0,1)$, 
\begin{align*} 
    & (\kappa-\kappa^{2} ) \mathfrak{I} (u_{1}, u_{1} ; \lambda^{*} ) + ((1-\kappa)-(1-\kappa)^{2} ) \mathfrak{I} (u_{2}, u_{2} ; \lambda^{*} ) 
    \\  & \indent 
    \geq (\kappa-\kappa^{2} ) \mathfrak{I} (u_{1}, u_{2} ; \lambda^{*} ) + (\kappa-\kappa^{2} ) \mathfrak{I} (u_{2}, u_{1} ; \lambda^{*} ),
\end{align*}    
which further indicates that
\begin{align*}
    & 
\kappa \mathfrak{I} (u_{1}, u_{1} ; \lambda^{*} )+(1-\kappa) \mathfrak{I}\left(u_{2}, u_{2} ; \lambda^{*}\right) 
\\
&\indent 
\geq (\kappa-\kappa^{2} ) \mathfrak{I} (u_{1}, u_{2} ; \lambda^{*} )+ (\kappa-\kappa^{2} ) \mathfrak{I} (u_{2}, u_{1} ; \lambda^{*} )+\kappa^{2} \mathfrak{I} (u_{1}, u_{1} ; \lambda^{*} )+(1-\kappa)^{2} \mathfrak{I} (u_{2}, u_{2} ; \lambda^{*} )
\\
& \indent 
=\mathfrak{I} (\kappa u_{1}+(1-\kappa) u_{2}, \kappa u_{1}+(1-\kappa) u_{2} ; \lambda^{*} ) .
\end{align*}
So $\mathfrak{I} (\cdot;\lambda^*)$ is convex in $L_\mathbb{F}^4 (\Omega; L^4 ([0,T]; \mathbb{R}^m))$ for any $\lambda^* \in \Lambda^{\bar{u}}$. 
By applying the Minimax Theorem in Lemma \ref{mimmaxthe}, it deduces from \eqref{isdg} that 
$
\sup_{\lambda^*(\mathrm{d}\gamma)\in \Lambda^{\bar{u}}}    \inf_{u(\cdot)\in\mathcal{U}^4}  \mathfrak{I} (v;\lambda^*(\mathrm{d}\gamma))\ge0.
$
Now we can take $\{\lambda_N^*(\mathrm{d}\gamma)\}_{N=1}^{+\infty}\subset  \Lambda^{\bar{u}}$, such that
\[\inf_{u(\cdot)\in\mathcal{U}^4}  \mathfrak{I} (v;\lambda_N^*(\mathrm{d}\gamma))\ge -\frac{1}{N}.\]
Denote by $\lambda^*(\mathrm{d}\gamma)$ (for simplicity)  the weak limitation of $\{\lambda_N^*(\mathrm{d}\gamma)\}_{N=1}^{+\infty}$.
Then it follows that 
\[\inf_{u(\cdot)\in\mathcal{U}^4}  \mathfrak{I} (v;\lambda^*(\mathrm{d}\gamma))\ge 0.\]
That is to say, for any $v(\cdot)=u(\cdot)-\bar{u}(\cdot)$ with $u(\cdot)\in\mathcal{U}^4$, 
\begin{equation}\label{initial-second-necessary-condition}
  \begin{aligned}
\int_{\Gamma} \mathbb{E}\int_0^T \left\langle\mathbb{S}_\gamma(t)y_{1,\gamma}(t),  v(t)\right\rangle \mathrm{d}t  \lambda^*(\mathrm{d}\gamma) \le 0.
  \end{aligned}
\end{equation}
This completes the proof. 
\end{proof}

\section{The proof of Theorem~\ref{thm-point-nece-cla}}\label{seclapoisecpro}

\subsection{Technical lemmas}
We present the following two technical lemmas before carrying out the main proof. The first one proves a version of the Lebesgue differentiation theorem of multiple integrations involving both stochastic and deterministic ones under the uncertainty setting. 
 
\begin{lem}\label{lemma-cent-limi-int-tran}
Let $\lambda^*(\mathrm{d}\gamma)\in\Lambda$. Assume that $\varphi_\gamma(\cdot)  \in L_\mathbb{F}^2(\Omega;L^2(0,T;\mathbb{R}^n))$ for any $\gamma\in\Gamma$, $\varphi = \Phi, \Psi$. 
Then for a.e. $\tau\in[0,T)$,
\begin{equation}\label{lim-lem-conv-firs}
 \lim_{\alpha\rightarrow0^+} \frac{1}{\alpha^2} \int_\Gamma \mathbb{E}\int_\tau^{\tau+\alpha}
 \Big\langle\Phi_\gamma(\tau),\int_\tau^t\Psi_\gamma(s)\mathrm{d}s\Big\rangle\mathrm{d}t\lambda^*(\mathrm{d}\gamma)
 =\frac{1}{2}\int_\Gamma \mathbb{E}\Big\langle\Phi_\gamma(\tau),\Psi_\gamma(\tau)\Big\rangle \lambda^*(\mathrm{d}\gamma),
\end{equation}
\begin{equation}\label{lim-lem-conv-seco}
 \lim_{\alpha\rightarrow0^+} \frac{1}{\alpha^2} \int_\Gamma \mathbb{E}\int_\tau^{\tau+\alpha}
 \Big\langle\Phi_\gamma(t),\int_\tau^t\Psi_\gamma(s)\mathrm{d}s\Big\rangle\mathrm{d}t\lambda^*(\mathrm{d}\gamma)
 =\frac{1}{2}\int_\Gamma \mathbb{E}\Big\langle\Phi_\gamma(\tau),\Psi_\gamma(\tau)\Big\rangle \lambda^*(\mathrm{d}\gamma).
\end{equation}
\end{lem}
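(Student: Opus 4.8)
The plan is to reduce both limits to the vector-valued Lebesgue differentiation theorem in one fixed Hilbert space, so that the exceptional null set of times does not depend on the uncertainty parameter. Fix $\lambda^*\in\Lambda$ and set $X:=L^2\big(\Gamma\times\Omega,\mathcal{B}(\Gamma)\otimes\mathcal{F}_T,\lambda^*\otimes\mathbb{P};\mathbb{R}^n\big)$, whose norm satisfies $\|f\|_X^2=\int_\Gamma\mathbb{E}|f_\gamma|^2\,\lambda^*(\mathrm{d}\gamma)$. Using the joint measurability in $(\gamma,t,\omega)$ available in our setting (the partitions-of-unity construction of Section~\ref{seformmainres}), I would first check that $t\mapsto\Phi(t):=(\Phi_\gamma(t))_{\gamma}$ and $t\mapsto\Psi(t):=(\Psi_\gamma(t))_{\gamma}$ are strongly measurable and belong to $L^2([0,T];X)$. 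Applying the scalar Lebesgue differentiation theorem to $s\mapsto\|\varphi(s)-q\|_X^2$ as $q$ runs over a countable dense subset of $X$ and intersecting the resulting full-measure sets, I obtain a set $E\subset[0,T)$ of full Lebesgue measure such that for every $\tau\in E$,
\begin{equation*}
\|\Phi(\tau)\|_X+\|\Psi(\tau)\|_X<\infty,\qquad \lim_{\alpha\to0^+}\frac1\alpha\int_\tau^{\tau+\alpha}\|\varphi(s)-\varphi(\tau)\|_X^2\,\mathrm{d}s=0,\quad\varphi=\Phi,\Psi.
\end{equation*}
I will prove both identities for every such $\tau$.

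For \eqref{lim-lem-conv-firs}, I would split $\big\langle\Phi_\gamma(\tau),\int_\tau^t\Psi_\gamma(s)\,\mathrm{d}s\big\rangle=\int_\tau^t\langle\Phi_\gamma(\tau),\Psi_\gamma(\tau)\rangle\,\mathrm{d}s+\int_\tau^t\langle\Phi_\gamma(\tau),\Psi_\gamma(s)-\Psi_\gamma(\tau)\rangle\,\mathrm{d}s$. Since $\int_\tau^{\tau+\alpha}\!\!\int_\tau^t\mathrm{d}s\,\mathrm{d}t=\alpha^2/2$, the first summand contributes exactly $\tfrac12\int_\Gamma\mathbb{E}\langle\Phi_\gamma(\tau),\Psi_\gamma(\tau)\rangle\,\lambda^*(\mathrm{d}\gamma)$ after applying $\alpha^{-2}\int_\Gamma\mathbb{E}[\cdot]\,\lambda^*(\mathrm{d}\gamma)$, i.e.\ the claimed right-hand side. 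For the second summand, Cauchy--Schwarz in $s$ together with $\int_\tau^{\tau+\alpha}(t-\tau)^{1/2}\,\mathrm{d}t\le\alpha^{3/2}$ and one more Cauchy--Schwarz in $\lambda^*\otimes\mathbb{P}$ yield
\begin{equation*}
\Big|\frac1{\alpha^2}\int_\Gamma\mathbb{E}\int_\tau^{\tau+\alpha}\!\!\int_\tau^t\langle\Phi_\gamma(\tau),\Psi_\gamma(s)-\Psi_\gamma(\tau)\rangle\,\mathrm{d}s\,\mathrm{d}t\,\lambda^*(\mathrm{d}\gamma)\Big|\le\|\Phi(\tau)\|_X\Big(\frac1\alpha\int_\tau^{\tau+\alpha}\|\Psi(s)-\Psi(\tau)\|_X^2\,\mathrm{d}s\Big)^{1/2},
\end{equation*}
which tends to $0$ by the Lebesgue-point property at $\tau$.

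For \eqref{lim-lem-conv-seco}, I would write $\Phi_\gamma(t)=\Phi_\gamma(\tau)+\big(\Phi_\gamma(t)-\Phi_\gamma(\tau)\big)$; the $\Phi_\gamma(\tau)$-part is exactly the integrand of \eqref{lim-lem-conv-firs}, already handled, and for the remaining part the estimates $\big|\int_\tau^t\Psi_\gamma(s)\,\mathrm{d}s\big|\le\alpha^{1/2}\big(\int_\tau^{\tau+\alpha}|\Psi_\gamma(s)|^2\,\mathrm{d}s\big)^{1/2}$ and $\int_\tau^{\tau+\alpha}|\Phi_\gamma(t)-\Phi_\gamma(\tau)|\,\mathrm{d}t\le\alpha^{1/2}\big(\int_\tau^{\tau+\alpha}|\Phi_\gamma(t)-\Phi_\gamma(\tau)|^2\,\mathrm{d}t\big)^{1/2}$, together with Cauchy--Schwarz in $\lambda^*\otimes\mathbb{P}$, control its modulus by
\begin{equation*}
\Big(\frac1\alpha\int_\tau^{\tau+\alpha}\|\Psi(s)\|_X^2\,\mathrm{d}s\Big)^{1/2}\Big(\frac1\alpha\int_\tau^{\tau+\alpha}\|\Phi(t)-\Phi(\tau)\|_X^2\,\mathrm{d}t\Big)^{1/2},
\end{equation*}
which converges to $\|\Psi(\tau)\|_X\cdot 0=0$ for $\tau\in E$. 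Combining the two parts gives \eqref{lim-lem-conv-seco}.

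I expect the main obstacle to be the first step rather than the estimates: one must justify that $\Phi$ and $\Psi$ give rise to genuinely strongly measurable $X$-valued maps of $t$ — this is exactly where joint measurability in the uncertainty parameter $\gamma$, supplied in our framework by the partitions-of-unity argument, is essential — and one must invoke the vector-valued Lebesgue differentiation theorem; organizing the argument in the single Hilbert space $X$, as opposed to working fibrewise in $\gamma$ and integrating afterwards, is precisely what keeps the exceptional set $[0,T)\setminus E$ independent of $\gamma$. As an alternative to the countable-dense-set argument, the same conclusion can be reached via the $X$-valued Hardy--Littlewood maximal operator (weak-$(1,1)$ and strong-$(2,2)$) combined with density of $t$-simple $X$-valued functions in $L^2([0,T];X)$.
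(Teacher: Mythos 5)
Your proof is correct and follows essentially the same route as the paper: the same splitting off of $\Psi_\gamma(\tau)$ (resp.\ $\Phi_\gamma(\tau)$), Cauchy--Schwarz, and the Lebesgue differentiation theorem applied to $s\mapsto\int_\Gamma\mathbb{E}\,|\Psi_\gamma(s)-\Psi_\gamma(\tau)|^2\,\lambda^*(\mathrm{d}\gamma)$, which is exactly your $\|\cdot\|_X^2$. Your packaging in the single Hilbert space $X=L^2(\lambda^*\otimes\mathbb{P};\mathbb{R}^n)$ is in fact slightly cleaner: it makes explicit the countable-dense-set (Lebesgue point) argument that the paper only cites implicitly in \eqref{dqmtf}, and in \eqref{lim-lem-conv-firs} it replaces the paper's factor $\sup_{\gamma\in\Gamma}(\mathbb{E}|\Phi_\gamma(\tau)|^2)^{1/2}$ (which the paper must justify by a separate uniformity-in-$\gamma$ argument) with the weaker and sufficient bound $\|\Phi(\tau)\|_X$.
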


\begin{proof}

Since 
\begin{align*}
    \frac{1}{\alpha^2} \int_\Gamma \mathbb{E}\int_\tau^{\tau+\alpha}
    \Big\langle\Phi_\gamma(\tau),\int_\tau^t\Psi_\gamma(\tau)\mathrm{d}s\Big\rangle\mathrm{d}t\lambda^*(\mathrm{d}\gamma)
    =\frac{1}{2}\int_\Gamma \mathbb{E}\Big\langle\Phi_\gamma(\tau),\Psi_\gamma(\tau)\Big\rangle \lambda^*(\mathrm{d}\gamma) ,
\end{align*}
to prove \eqref{lim-lem-conv-firs}, 
it suffices to prove 
\begin{align}\label{mllfa}
    \lim_{\alpha\rightarrow0^+} \frac{1}{\alpha^2} \Big| \int_\Gamma \mathbb{E}\int_\tau^{\tau+\alpha}
 \Big\langle\Phi_\gamma(\tau),\int_\tau^t\Psi_\gamma(s)-\Psi_\gamma(\tau)\mathrm{d}s\Big\rangle\mathrm{d}t\lambda^*(\mathrm{d}\gamma) \Big|=0.
\end{align}
In fact, 
\begin{align*}
    &   \lim_{\alpha\rightarrow0^+} \frac{1}{\alpha^2} \Big| \int_\Gamma \mathbb{E}\int_\tau^{\tau+\alpha}
    \Big\langle\Phi_\gamma(\tau),\int_\tau^t\Psi_\gamma(s)-\Psi_\gamma(\tau)\mathrm{d}s\Big\rangle\mathrm{d}t\lambda^*(\mathrm{d}\gamma) \Big|
    \\  & \indent 
    \le 
    \lim_{\alpha\rightarrow0^+} \frac{1}{\alpha^2}  \int_\Gamma \int_\tau^{\tau+\alpha} \mathbb{E} \Big( |\Phi_\gamma(\tau)| \int_\tau^t \big|\Psi_\gamma(s)-\Psi_\gamma(\tau)\big| \mathrm{d}s  \Big) \mathrm{d}t\lambda^*(\mathrm{d}\gamma)  
    \\  &  \indent 
    \le 
    \lim_{\alpha\rightarrow0^+} \frac{1}{ \alpha^2}  \sup_{\gamma\in\Gamma} (\mathbb{E}|\Phi_\gamma(\tau)|^2)^{\frac{1}{2} }  \int_\tau^{\tau+\alpha} (t-\tau )^{\frac{1}{2} } \Big( \int_\tau^t \mathbb{E} \int_\Gamma \big|\Psi_\gamma(s)-\Psi_\gamma(\tau)\big|^2 \lambda^*(\mathrm{d}\gamma) \mathrm{d}s \Big)^{\frac{1}{2} } \mathrm{d}t  
    \\  &  \indent 
    \le 
    \frac{1}{2}\lim_{\alpha\rightarrow0^+}  \sup_{\gamma\in\Gamma} (\mathbb{E}|\Phi_\gamma(\tau)|^2)^{\frac{1}{2} } \sup_{t\in(\tau,\tau+\alpha)}\Big(\frac{1}{t-\tau}  \int_\tau^t \mathbb{E} \int_\Gamma \big|\Psi_\gamma(s)-\Psi_\gamma(\tau)\big|^2 \lambda^*(\mathrm{d}\gamma)\mathrm{d}s \Big)^{\frac{1}{2}},  
\end{align*}
where we have utilized the continuity of 
$ \int_\tau^t \mathbb{E}\int_\Gamma\big|\Psi_\gamma(s)-\Psi_\gamma(\tau)\big|^2 \lambda^*(\mathrm{d}\gamma)\mathrm{d}s$
in $t$ due to the absolutely continuous of Lebesgue integration, and utilized 
the continuity of $\Phi_\cdot(\cdot)$ in $\gamma$ in the moment sense to deal with the common null set of $\tau \in[0,T]$ with proof by contradiction to bound the supremum $\sup_{\gamma\in\Gamma} (\mathbb{E}|\Phi_\gamma(\tau)|^2)^{\frac{1}{2} }$, 
and use the existence of the limit 
\begin{align}\label{dqmtf}
    \lim_{t\to \tau^+}  \frac{1}{t-\tau }  \int_\tau^t \mathbb{E}\int_\Gamma\big|\Psi_\gamma(s)-\Psi_\gamma(\tau)\big|^2 \lambda^*(\mathrm{d}\gamma) \mathrm{d}s  =0 \quad  a.e. \  \tau \in [0,T)
\end{align}
by Lebesgue differentiation theorem, ensuring the well-posedness of 
\begin{align}\label{mosulep}
    \sup_{t\in (\tau,\tau+\alpha)} \Big\{\frac{1}{t-\tau }  \int_\tau^t \mathbb{E}\int_\Gamma\big|\Psi_\gamma(s)-\Psi_\gamma(\tau)\big|^2 \lambda^*(\mathrm{d}\gamma)\mathrm{d}s \Big\}.
\end{align}
Besides, the value of the term \eqref{mosulep} decreases as $\alpha \to 0^+$. 
Then by Lebesgue monotone convergence theorem,  \eqref{mllfa} is proved, so is \eqref{lim-lem-conv-firs}.

To prove \eqref{lim-lem-conv-seco} provided with \eqref{lim-lem-conv-firs}, it is sufficient to prove  
\begin{align*}
    & \lim_{\alpha\rightarrow0^+} \Big|\frac{1}{\alpha^2} \int_\Gamma \mathbb{E}\int_\tau^{\tau+\alpha}
     \Big\langle\Phi_\gamma(t)-\Phi_\gamma(\tau ),\int_\tau^t\Psi_\gamma(s)\mathrm{d}s\Big\rangle\mathrm{d}t\lambda^*(\mathrm{d}\gamma) \Big| =0. 
\end{align*}
Actually, by H\"{o}lder's inequality, Tonelli's theorem, and Lebesgue differentiation theorem, we have 
\begin{align*}
   & \lim_{\alpha\rightarrow0^+} \Big|\frac{1}{\alpha^2} \int_\Gamma \mathbb{E}\int_\tau^{\tau+\alpha}
    \Big\langle\Phi_\gamma(t)-\Phi_\gamma(\tau ),\int_\tau^t\Psi_\gamma(s)\mathrm{d}s\Big\rangle\mathrm{d}t\lambda^*(\mathrm{d}\gamma) \Big| 
    \\ & \quad 
    \le \lim_{\alpha\rightarrow0^+}  \frac{1}{\alpha^2} \int_\tau^{\tau+\alpha} \int_\Gamma \mathbb{E} \Big( 
    \big|\Phi_\gamma(t)-\Phi_\gamma(\tau ) \big|  \Big| \int_\tau^t\Psi_\gamma(s)\mathrm{d}s \Big|\Big)  \lambda^*(\mathrm{d}\gamma)  \mathrm{d}t 
    \\ & \quad 
    \le \lim_{\alpha\rightarrow0^+}  \frac{1}{\alpha^2} \Big(\int_\tau^{\tau+\alpha} \int_\Gamma \mathbb{E} 
    \big|\Phi_\gamma(t)-\Phi_\gamma(\tau ) \big|^2 \lambda^*(\mathrm{d}\gamma) \mathrm{d}t\Big)^{\frac{1}{2}} \Big( \int_\tau^{\tau+\alpha} \int_\Gamma \mathbb{E}  \Big| \int_\tau^t\Psi_\gamma(s)\mathrm{d}s \Big|^2 \lambda^*(\mathrm{d}\gamma) \mathrm{d}t \Big)^{\frac{1}{2} }
    \\ & \quad 
    \le \lim_{\alpha\rightarrow0^+}  \frac{1}{ \sqrt{2} \alpha} \Big(\int_\tau^{\tau+\alpha} \int_\Gamma \mathbb{E} 
    \big|\Phi_\gamma(t)-\Phi_\gamma(\tau ) \big|^2 \lambda^*(\mathrm{d}\gamma) \mathrm{d}t\Big)^{\frac{1}{2}} 
    \Big( \int_\tau^{\tau+\alpha} \int_\Gamma \mathbb{E} |\Psi_\gamma(s)|^2 \lambda^*(\mathrm{d}\gamma)\mathrm{d}s  \Big)^{\frac{1}{2} }
    \\ & \quad 
    =0. 
\end{align*}
\end{proof}

The following lemma plays a role in approximating the Malliavin derivative, essentially utilizing the continuity of $\mathcal{D}_\cdot\varphi_\gamma(\cdot)$ on some neighborhood of $\{(t,t)|t\in[0,T]\}$. 
\begin{lem}\label{lem-der-phi-nab-phi=0}
For $\varphi (\cdot )\in \mathbb{L}_{2,\mathbb{F}}^{1,2}(\mathbb{R}^n)$, there exists a sequence of $\{\alpha_l\}_{l=1}^{\infty}$ of positive numbers satisfying $\alpha_l\to0$ as $l\to\infty$, such that
\begin{equation}\label{der-phi-nab-phi=0}
\begin{aligned}
& \lim_{l\to\infty} \frac{1}{\alpha_l^2}\int_{\Gamma} \int_\tau^{\tau+\alpha_l}\int_\tau^t\mathbb{E}   \left|\mathcal{D}_s\varphi_\gamma(t)-\nabla\varphi_\gamma(s)\right|^2 \mathrm{d}s\mathrm{d}t \lambda^*(\mathrm{d}\gamma)=0 \quad  a.e. \ \tau\in[0,T].
\end{aligned}
\end{equation}
\end{lem}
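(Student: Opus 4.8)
The plan is to deduce the almost-everywhere statement \eqref{der-phi-nab-phi=0} from an $L^{1}$-in-$\tau$ decay estimate, the essential mechanism being a Fubini exchange that converts the prefactor $\alpha_{l}^{-2}$ into the defining property of $\mathbb{L}_{2^{+}}^{1,2}(\mathbb{R}^{n})$.

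First I would reduce $\nabla\varphi_{\gamma}$ to $\mathcal{D}^{+}\varphi_{\gamma}$. Because $\varphi_{\gamma}(\cdot)\in\mathbb{L}_{2,\mathbb{F}}^{1,2}(\mathbb{R}^{n})$ is $\mathbb{F}$-adapted, one works with the version of $\mathcal{D}_{\cdot}\varphi_{\gamma}(\cdot)$ that vanishes on $\{(s,t):s>t\}$; then in the definition of $\mathbb{L}_{2^{-}}^{1,2}$ one has $g_{\alpha}(s)=\sup_{(s-\alpha)\vee0<t<s}\mathbb{E}|\mathcal{D}_{s}\varphi_{\gamma}(t)-\mathcal{D}^{-}\varphi_{\gamma}(s)|^{2}=\mathbb{E}|\mathcal{D}^{-}\varphi_{\gamma}(s)|^{2}$, which is constant in $\alpha$, so $\int_{0}^{T}g_{\alpha}(s)\,\mathrm{d}s\to0$ forces $\mathcal{D}^{-}\varphi_{\gamma}\equiv0$ and hence $\nabla\varphi_{\gamma}=\mathcal{D}^{+}\varphi_{\gamma}$. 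Since in \eqref{der-phi-nab-phi=0} the variables satisfy $\tau\le s\le t\le\tau+\alpha_{l}$, the integrand there equals $\mathbb{E}|\mathcal{D}_{s}\varphi_{\gamma}(t)-\mathcal{D}^{+}\varphi_{\gamma}(s)|^{2}$. Set $g_{\alpha}^{+}(s;\gamma):=\sup_{s<t<(s+\alpha)\wedge T}\mathbb{E}|\mathcal{D}_{s}\varphi_{\gamma}(t)-\mathcal{D}^{+}\varphi_{\gamma}(s)|^{2}$; by membership in $\mathbb{L}_{2^{+}}^{1,2}$ this is measurable and finite for a.e.\ $s$, nondecreasing in $\alpha$, and satisfies $\int_{0}^{T}g_{\alpha}^{+}(s;\gamma)\,\mathrm{d}s\to0$ as $\alpha\to0^{+}$ for every $\gamma$. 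Moreover, by the uniform-in-$\gamma$ Malliavin regularity in \emph{(H6)} and the joint measurability of $\mathcal{D}_{\cdot}\varphi_{\cdot}(\cdot)$ obtained via the local-compactness and partition-of-unity argument already used above for $\mathcal{D}_{\cdot}\mathbb{S}_{\cdot}(\cdot)$, the map $(\gamma,s)\mapsto g_{\alpha}^{+}(s;\gamma)$ is jointly measurable and $\gamma\mapsto\int_{0}^{T}g_{\alpha}^{+}(s;\gamma)\,\mathrm{d}s$ possesses a $\lambda^{*}$-integrable majorant independent of all small $\alpha$.

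Next I would prove the $L^{1}$-in-$\tau$ decay. Put
\begin{equation*}
G_{\alpha}(\tau):=\frac{1}{\alpha^{2}}\int_{\Gamma}\int_{\tau}^{(\tau+\alpha)\wedge T}\int_{\tau}^{t}\mathbb{E}\big|\mathcal{D}_{s}\varphi_{\gamma}(t)-\mathcal{D}^{+}\varphi_{\gamma}(s)\big|^{2}\,\mathrm{d}s\,\mathrm{d}t\,\lambda^{*}(\mathrm{d}\gamma)\ge0 .
\end{equation*}
The integrand being nonnegative and jointly measurable, Tonelli's theorem applies; since in the region $\{0\le\tau\le s\le t\le(\tau+\alpha)\wedge T\}$ the admissible $\tau$, for fixed $(s,t)$ with $s\le t$, fill the interval $[(t-\alpha)\vee0,\,s]$, which has length at most $\alpha$ and is nonempty only when $t-s\le\alpha$, we obtain for each $\gamma$
\begin{align*}
&\int_{0}^{T}\int_{\tau}^{(\tau+\alpha)\wedge T}\int_{\tau}^{t}\mathbb{E}\big|\mathcal{D}_{s}\varphi_{\gamma}(t)-\mathcal{D}^{+}\varphi_{\gamma}(s)\big|^{2}\,\mathrm{d}s\,\mathrm{d}t\,\mathrm{d}\tau\\
&\qquad\le\alpha\int_{0}^{T}\int_{s}^{(s+\alpha)\wedge T}\mathbb{E}\big|\mathcal{D}_{s}\varphi_{\gamma}(t)-\mathcal{D}^{+}\varphi_{\gamma}(s)\big|^{2}\,\mathrm{d}t\,\mathrm{d}s\le\alpha^{2}\int_{0}^{T}g_{\alpha}^{+}(s;\gamma)\,\mathrm{d}s .
\end{align*}
Integrating over $\gamma$ gives $\int_{0}^{T}G_{\alpha}(\tau)\,\mathrm{d}\tau\le\int_{\Gamma}\big(\int_{0}^{T}g_{\alpha}^{+}(s;\gamma)\,\mathrm{d}s\big)\,\lambda^{*}(\mathrm{d}\gamma)$; as $\alpha\downarrow0$ the inner integral decreases to $0$ for every $\gamma$ and is dominated by the $\lambda^{*}$-integrable majorant from the previous step, so by dominated (equivalently monotone) convergence $\int_{0}^{T}G_{\alpha}(\tau)\,\mathrm{d}\tau\to0$ as $\alpha\to0^{+}$.

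Finally I would extract a subsequence: choose $\alpha_{l}\downarrow0$ with $\int_{0}^{T}G_{\alpha_{l}}(\tau)\,\mathrm{d}\tau\le 2^{-l}$; then $\sum_{l}G_{\alpha_{l}}\in L^{1}([0,T])$, hence $\sum_{l}G_{\alpha_{l}}(\tau)<\infty$ and in particular $G_{\alpha_{l}}(\tau)\to0$ for a.e.\ $\tau\in[0,T]$, which is \eqref{der-phi-nab-phi=0}. I expect the only genuinely delicate step to be the interchange of the $\alpha$-limit with the $\lambda^{*}$-integral, which rests on a $\lambda^{*}$-integrable majorant for $\gamma\mapsto\int_{0}^{T}g_{\alpha}^{+}(s;\gamma)\,\mathrm{d}s$; this is exactly where the uniform-in-$\gamma$ hypotheses \emph{(H6)} and the local-compactness/partition-of-unity measurability machinery are used. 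Everything else is the Fubini geometry above combined with the defining property of $\mathbb{L}_{2^{+}}^{1,2}(\mathbb{R}^{n})$.
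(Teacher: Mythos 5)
Your proposal is correct and follows essentially the same route as the paper: a Tonelli exchange in $(\tau,s,t)$ that converts the $\alpha^{-2}$ prefactor into the defining property of $\mathbb{L}_{2^{+}}^{1,2}$, an interchange of the $\alpha$-limit with the $\lambda^{*}$-integral (the paper invokes monotone convergence where you invoke a dominating majorant — both rest on integrability of the initial term), and extraction of an a.e.-convergent subsequence from the $L^{1}$-in-$\tau$ decay. You are in fact more explicit than the paper on two points it leaves implicit, namely that adaptedness forces $\mathcal{D}^{-}\varphi_{\gamma}\equiv0$ so $\nabla\varphi_{\gamma}=\mathcal{D}^{+}\varphi_{\gamma}$, and the summable-subsequence argument yielding the almost-everywhere conclusion.
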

\begin{proof}

By exchange the order of integrals, displacement of intervals, substitution of variables, it can be proven by direct calculus (cf.  \cite[Lemma 2.1]{zhanghaisenzhangxu2015siam}) that for any $\gamma\in\Gamma$,
\begin{align}\label{der-phi-nab-phi=0pf1}
& \lim_{\alpha\to0^+} \frac{1}{\alpha^2}\int_0^T \int_\tau^{\tau+\alpha}\int_\tau^t\mathbb{E}   \left|\mathcal{D}_s\varphi_\gamma(t)-\nabla\varphi_\gamma(s)\right|^2 \mathrm{d}s\mathrm{d}t\mathrm{d}\tau   \notag  \\
&\indent  \le \lim_{\alpha\to0^+}\int_0^T \sup_{t\in[\tau,\tau+\alpha]\cap[0,T]}\mathbb{E}\left|\mathcal{D}_\tau\varphi_\gamma(t)-\nabla\varphi_\gamma(\tau)\right|^2 \mathrm{d}\tau   \\
&\indent =0,  \notag 
\end{align}
where the last equality is from the definition of $\nabla\varphi_\gamma(\cdot)$.
Then it follows 
\begin{subequations}\label{der-phi-nabruiuy-phi=0pf1}
\begin{align}
& \lim_{\alpha\to0^+}\frac{1}{\alpha^2}\int_\Gamma \int_0^T \int_\tau^{\tau+\alpha}\int_\tau^t\mathbb{E}   \left|\mathcal{D}_s\varphi_\gamma(t)-\nabla\varphi_\gamma(s)\right|^2 \mathrm{d}s\mathrm{d}t\mathrm{d}\tau \lambda^*(\mathrm{d}\gamma)  \notag \\
&\indent  \le \lim_{\alpha\to0^+}\int_\Gamma\int_0^T \sup_{t\in[\tau,\tau+\alpha]\cap[0,T]}\mathbb{E}\left|\mathcal{D}_\tau\varphi_\gamma(t)-\nabla\varphi_\gamma(\tau)\right|^2 \mathrm{d}\tau \lambda^*(\mathrm{d}\gamma)  \label{der-phi-nabruiuy-phi=0pf1a}\\
&\indent   = \int_\Gamma\lim_{\alpha\to0^+}\int_0^T \sup_{t\in[\tau,\tau+\alpha]\cap[0,T]}\mathbb{E}\left|\mathcal{D}_\tau\varphi_\gamma(t)-\nabla\varphi_\gamma(\tau)\right|^2 \mathrm{d}\tau \lambda^*(\mathrm{d}\gamma)  \label{der-phi-nabruiuy-phi=0pf1b}  \\
&\indent =0,   \label{der-phi-nabruiuy-phi=0pf1c}
\end{align}
\end{subequations}
where \eqref{der-phi-nabruiuy-phi=0pf1a} and \eqref{der-phi-nabruiuy-phi=0pf1c} are from~\eqref{der-phi-nab-phi=0pf1}, 
while \eqref{der-phi-nabruiuy-phi=0pf1b} is from the Lebesgue monotone convergence theorem, 
since 
\[\int_0^T\mathop{\sup}_{t\in[\tau,\tau+\alpha]\cap[0,T]}\mathbb{E}\left|\mathcal{D}_\tau\varphi_\gamma(t)-\nabla\varphi_\gamma(\tau)\right|^2 \mathrm{d}\tau\]
is non-negative and decreases as $\alpha\to0^+$ for any $\gamma\in\Gamma$.
By~\eqref{der-phi-nabruiuy-phi=0pf1} and Fubini's theorem, we obtain  
\begin{equation*}
\begin{aligned}
& \lim_{\alpha\to0^+}\frac{1}{\alpha^2} \int_0^T \int_\Gamma\int_\tau^{\tau+\alpha}\int_\tau^t\mathbb{E}   \left|\mathcal{D}_s\varphi_\gamma(t)-\nabla\varphi_\gamma(s)\right|^2 \mathrm{d}s\mathrm{d}t \lambda^*(\mathrm{d}\gamma)\mathrm{d}\tau =0,
\end{aligned}
\end{equation*}
then~\eqref{der-phi-nab-phi=0} follows.
\end{proof}

\subsection{The proof of Theorem ~\ref{thm-point-nece-cla}}

\begin{proof}[The proof of Theorem~\ref{thm-point-nece-cla}]
Now we derive the pointwise necessary conditions from the integral type by performing density arguments, thoroughly including temporal, spatial, and sample variables. Additionally, we adopt methods proposed in \cite{zhanghaisenzhangxu2015siam} and extended in \cite{zhanghaisenzhangxu2018siamreview,luqizhanghaisenzhangxu2021siam} with tools from Malliavin calculus to address subtle problems related to the integrable order deficiencies in multiple integrals of Lebesgue and It\^o types. These deficiencies hinder the effectiveness of the classical Lebesgue differentiation theorem. 
Throughout this process, we will repeatedly utilize the above two lemmas of Lebesgue differentiation and Malliavin approximation tailored to our model. 

Since $\mathbb{F}$ is the natural filtration generated by the standard Brownian motion augmented by all the $\mathbb{P}$-null sets, one can find a sequence $\{A_l\}_{l=1}^{\infty}\subset\mathcal{F}_t, \forall t\in[0,T]$, such that for any $A\in\mathcal{F}_t$, there exists a sub-sequence $\{A_{l_i}\}_{i=1}^{\infty}\subset\{A_l\}_{l=1}^{\infty}$,
satisfying
$\lim_{i\to\infty}\mathbb{P}(A\vartriangle A_{l_i})=0$,
where $A\vartriangle A_{l_i}:=(A\backslash A_{l_i})\cup(A_{l_i}\backslash A)$.
Denote by $\{t_i\}_{i=1}^\infty$ the sequence of rational numbers in $[0,T)$, by $\{v^k\}_{k=1}^\infty$ a dense subset of $U$. For any $i\in \mathbb{N}$, take a sequence $\{A_{ij}\}_{j=1}^{\infty}(\subset\mathcal{F}_{t_i})$ which can generate $\mathcal{F}_{t_i}$.
Fix $i,j,k$ arbitrarily.
For any $\tau\in[t_i,T)$ and $\alpha\in(0,T-\tau)$, denote $E_\alpha^i=[\tau,\tau+\alpha)$,
\begin{equation*}
u_{ij}^k(t,\omega)=
\left\{
\begin{aligned}
   v^k,  \indent\quad& (t,\omega)\in  E_\alpha^i\times A_{ij},   \\
\bar{u}(t,\omega),  \indent &  (t,\omega)\in ([0,T]\times\Omega)\backslash (E_\alpha^i\times A_{ij}),
\end{aligned}
\right.
\end{equation*}
and
\[v_{ij}^k(t,\omega)=u_{ij}^k(t,\omega)-\bar{u}(t,\omega) =(v^k-\bar{u}(t,\omega))\chi_{A_{ij}}(\omega)\chi_{E_\alpha^i}(t),\quad (t,\omega)\in[0,T]\times\Omega.\]
Clearly $u_{ij}^k(\cdot), v_{ij}^k(\cdot)\in\mathcal{U}^4 $.
Denote by $y_{\gamma}^{ijk}$ the solution to~\eqref{variant-equat-first} with $v(\cdot)$ replaced by $v_{ij}^k(\cdot)$, recalling \eqref{lirephy}, then actually
\begin{equation}\label{explicit-form-yijk}
\begin{aligned}
y_{\gamma}^{ijk}(t)=&\Phi_\gamma(t)\int_0^t \Phi_\gamma(s)^{-1}\left(\partial_u b_\gamma(s)-\partial_x\sigma_\gamma(s)\partial_u\sigma_\gamma(s)\right) (v^k-\bar{u}(s))\chi_{A_{ij}}(\omega)\chi_{E_\alpha^i}(s)\mathrm{d}s   \\
&+\Phi_\gamma(t)\int_0^t\Phi_\gamma(s)^{-1}\partial_u\sigma_\gamma(s) (v^k-\bar{u}(s))\chi_{A_{ij}}(\omega)\chi_{E_\alpha^i}(s)\mathrm{d}W(s).
\end{aligned}
\end{equation}
Replacing $v(\cdot)$ by $v_{ij}^k(\cdot)$ in~\eqref{initial-second-necessary-condition} results in
\begin{equation}\label{initial-second-necessary-condition-yijk}
\int_{\Gamma}  \mathbb{E}\int_\tau^{\tau+\alpha} \left\langle\mathbb{S}_\gamma(t)y^{ijk}_{\gamma}(t), v^k-\bar{u}(t) \right\rangle \mathrm{d}t  \lambda^*(\mathrm{d}\gamma)\le0.
\end{equation}
Then plug \eqref{explicit-form-yijk} into~\eqref{initial-second-necessary-condition-yijk} resulting in 
\[0\ge I_1(\alpha )+I_0(\alpha ),\]
where 
\begin{align*}
& I_1(\alpha )=\frac{1}{\alpha^2}\int_{\Gamma} \mathbb{E}\int_\tau^{\tau+\alpha} \Big\langle\mathbb{S}_\gamma(t)\Phi_\gamma(t)\int_\tau^t \Phi_\gamma(s)^{-1}\left(\partial_u b_\gamma(s)-\partial_x\sigma_\gamma(s)\partial_u\sigma_\gamma(s)\right) 
\\&\indent\indent\indent\indent\indent\indent \cdot (v^k-\bar{u}(s))\chi_{A_{ij}}(\omega)\mathrm{d}s,  v^k-\bar{u}(t) \Big\rangle \chi_{A_{ij}}(\omega)\mathrm{d}t  \lambda^*(\mathrm{d}\gamma),   
\\&
I_0(\alpha )= \frac{1}{\alpha^2}\int_{\Gamma} \mathbb{E}\int_\tau^{\tau+\alpha} \Big\langle\mathbb{S}_\gamma(t)\Phi_\gamma(t)\int_\tau^t \Phi_\gamma(s)^{-1}\partial_u\sigma_\gamma(s) 
\\&\indent\indent\indent\indent\indent\indent \cdot (v^k-\bar{u}(s))\chi_{A_{ij}}(\omega)\mathrm{d}W(s), v^k-\bar{u}(t) \Big\rangle \chi_{A_{ij}}(\omega)\mathrm{d}t  \lambda^*(\mathrm{d}\gamma).
\end{align*}
$I_0(\alpha )$ is in the form of a combination of Lebesgue and It\^o multiple integration, preventing a good use of Lebesgue differentiation theorem. 
Recalling~\eqref{solu-form-ele-phi}, replacing the $\Phi_\gamma(t)$ in  $I_0(\alpha )$ by its explicit representation, we arrive at 
\[0 \ge  I_1(\alpha )+I_2(\alpha )+I_3(\alpha )+I_4(\alpha ),\]
where 
\begin{align*}
& 
I_2(\alpha )=\frac{1}{\alpha^2}\int_{\Gamma}\int_\tau^{\tau+\alpha}\mathbb{E} \Big[ \Big\langle\mathbb{S}_\gamma(t)\Phi_\gamma(\tau)\int_\tau^t \Phi_\gamma(s)^{-1}\partial_u\sigma_\gamma(s) 
\\&\indent\indent\indent\indent\indent\indent \cdot (v^k-\bar{u}(s))\chi_{A_{ij}}(\omega)\mathrm{d}W(s),v^k-\bar{u}(t) \Big\rangle \chi_{A_{ij}}(\omega)\Big]\mathrm{d}t \lambda^*(\mathrm{d}\gamma), 
\\
& 
I_3(\alpha )= \frac{1}{\alpha^2}\int_{\Gamma}\int_\tau^{\tau+\alpha} \mathbb{E} \Big[ \Big\langle\mathbb{S}_\gamma(t)\int_\tau^t\partial_x b_\gamma(s)\Phi_\gamma(s)\mathrm{d}s\int_\tau^t \Phi_\gamma(s)^{-1}\partial_u\sigma_\gamma(s) \\&\indent\indent\indent\indent\indent\indent \cdot (v^k-\bar{u}(s))\chi_{A_{ij}}(\omega)\mathrm{d}W(s),v^k-\bar{u}(t) \Big\rangle \chi_{A_{ij}}(\omega)\Big]\mathrm{d}t \lambda^*(\mathrm{d}\gamma),   
\\
& 
I_4 (\alpha )= \frac{1}{\alpha^2}\int_{\Gamma}\int_\tau^{\tau+\alpha} \mathbb{E} \Big[ \Big\langle\mathbb{S}_\gamma(t)\int_\tau^t\partial_x \sigma_\gamma(s)\Phi_\gamma(s)\mathrm{d}W(s)\int_\tau^t \Phi_\gamma(s)^{-1}\partial_u\sigma_\gamma(s) 
\\&\indent\indent\indent\indent\indent\indent \cdot (v^k-\bar{u}(s))\chi_{A_{ij}}(\omega)\mathrm{d}W(s),v^k-\bar{u}(t) \Big\rangle \chi_{A_{ij}}(\omega)\Big]\mathrm{d}t \lambda^*(\mathrm{d}\gamma).
\end{align*}
Among the remaining  five parts of the proof,   we will estimate the above $I_1(\alpha ), I_2(\alpha ), I_3(\alpha ), I_4(\alpha )$ as ${\alpha\to0^+}$ separately and complete the proof ultimately.

\textbf{Part 1:}  \  Since $I_1(\alpha )$ is a term of multiple Lebesgue integration,  from (H3) (ii), together with  Lemma~\ref{lemma-cent-limi-int-tran}, we obtain 
\begin{equation*}
\begin{aligned}
&\lim_{\alpha\to0^+} I_1(\alpha ) =\frac{1}{2} \int_{\Gamma}\mathbb{E}\Big[ \Big\langle\mathbb{S}_\gamma(\tau) \left(\partial_u b_\gamma(\tau)-\partial_x\sigma_\gamma(\tau)\partial_u\sigma_\gamma(\tau)\right) \\&\indent\indent\indent\indent\indent\indent \cdot (v^k-\bar{u}(\tau)),v^k-\bar{u}(\tau) \Big\rangle \chi_{A_{ij}}(\omega) \Big]\lambda^*(\mathrm{d}\gamma) \ \ \  a.e. \  \tau\in[t_i,T).
\end{aligned}
\end{equation*}

\textbf{Part 2:}   \   In this part we estimate $I_2(\alpha )$. Based on (H6) (i), by the Clark-Ocone formula~\eqref{claocoori}, for a.e. $t\in[0,T]$ and any $\gamma\in\Gamma$,
$
\mathbb{S}_\gamma(t)^\top(v^k-\bar{u}(t)) =\mathbb{E} [\mathbb{S}_\gamma(t)^\top(v^k-\bar{u}(t)) ]+\int_0^t \mathbb{E} [\mathcal{D}_s (\mathbb{S}_\gamma(t)^\top(v^k-\bar{u}(t)) )|\mathcal{F}_s ] \mathrm{d}W(s).
$
Substitute this representation  into $I_2$, then by elementary martingale analysis, we get 
\begin{align*}
    & 
    I_2(\alpha )= 
    \frac{1}{\alpha^2}\int_{\Gamma} \int_\tau^{\tau+\alpha}\int_\tau^t\mathbb{E}\Big[ \Big\langle \Phi_\gamma(\tau)  \Phi_\gamma(s)^{-1}\partial_u\sigma_\gamma(s) (v^k-\bar{u}(s)), 
    \\
&\indent\indent\indent\indent\indent\indent     \mathcal{D}_s \Big( \mathbb{S}_\gamma(t)^\top (v^k-\bar{u}(t)) \Big) \Big\rangle \chi_{A_{ij}}(\omega)\Big]\mathrm{d}s\mathrm{d}t \lambda^*(\mathrm{d}\gamma) . 
\end{align*}
Recalling that on the basis of  (H6), 
$
\mathcal{D}_s\left(\mathbb{S}_\gamma(t)^\top(v^k-\bar{u}(t))\right) =\mathcal{D}_s\mathbb{S}_\gamma(t)^\top(v^k-\bar{u}(t)) -\mathbb{S}_\gamma(t)^\top\mathcal{D}_s\bar{u}(t).
$
Then 
\begin{align*}
&  I_2 (\alpha ) =   \frac{1}{\alpha^2}\int_{\Gamma} \int_\tau^{\tau+\alpha}\int_\tau^t\mathbb{E}\Big[ \Big\langle \Phi_\gamma(\tau)  \Phi_\gamma(s)^{-1}\partial_u\sigma_\gamma(s) (v^k-\bar{u}(s)), \\
&\indent\indent\indent\indent\indent\indent     \mathcal{D}_s\mathbb{S}_\gamma(t)^\top (v^k-\bar{u}(t))\Big\rangle \chi_{A_{ij}}(\omega)\Big]\mathrm{d}s\mathrm{d}t \lambda^*(\mathrm{d}\gamma)  \\
&\indent\indent  - \frac{1}{\alpha^2}\int_{\Gamma} \int_\tau^{\tau+\alpha}\int_\tau^t\mathbb{E}\Big[ \Big\langle \Phi_\gamma(\tau)  \Phi_\gamma(s)^{-1}\partial_u\sigma_\gamma(s) (v^k-\bar{u}(s)), \\ & \indent\indent\indent\indent\indent\indent     \mathbb{S}_\gamma(t)^\top\mathcal{D}_s\bar{u}(t)\Big\rangle \chi_{A_{ij}}(\omega)\Big]\mathrm{d}s\mathrm{d}t \lambda^*(\mathrm{d}\gamma)   \\
&\quad =:I_5(\alpha )-I_6(\alpha ).
\end{align*}
Now we approximate $\mathcal{D}_\cdot\mathbb{S}_\gamma(\cdot)$ by $\nabla\mathbb{S}_\gamma(\cdot)$ for any $\gamma\in\Gamma$, to utilize the continuity of $\mathcal{D}_\cdot\varphi_\gamma(\cdot)$ on some neighborhood of $\{(t,t)|t\in[0,T]\}$:
\begin{equation*}
\begin{aligned}
& I_5 (\alpha ) =\frac{1}{\alpha^2}\int_{\Gamma} \int_\tau^{\tau+\alpha}\int_\tau^t\mathbb{E}\Big[ \Big\langle \Phi_\gamma(\tau)  \Phi_\gamma(s)^{-1}\partial_u\sigma_\gamma(s) (v^k-\bar{u}(s)), \\
&\indent\indent\indent\indent\indent\indent     \left(\mathcal{D}_s\mathbb{S}_\gamma(t)-\nabla\mathbb{S}_\gamma(s)\right)^\top (v^k-\bar{u}(t))\Big\rangle \chi_{A_{ij}}(\omega)\Big]\mathrm{d}s\mathrm{d}t \lambda^*(\mathrm{d}\gamma)  \\
&\indent\indent  + \frac{1}{\alpha^2}\int_{\Gamma} \int_\tau^{\tau+\alpha}\int_\tau^t\mathbb{E}\Big[ \Big\langle \Phi_\gamma(\tau)  \Phi_\gamma(s)^{-1}\partial_u\sigma_\gamma(s) (v^k-\bar{u}(s)), \\
&\indent\indent\indent\indent\indent\indent     \nabla\mathbb{S}_\gamma(s)^\top (v^k-\bar{u}(t))\Big\rangle \chi_{A_{ij}}(\omega)\Big]\mathrm{d}s\mathrm{d}t \lambda^*(\mathrm{d}\gamma)  \\&\quad  =:I_7(\alpha )+I_8(\alpha ).
\end{aligned}
\end{equation*}
By Lemma~\ref{lem-der-phi-nab-phi=0}, there exists a sequence of ${\{\alpha_l\}}_{l=1}^{\infty}$ satisfying $\alpha_l\to0$ as $l\to\infty$, such that
\begin{equation*}
  \begin{aligned}
  & \lim_{l\to\infty}I_7(\alpha_l)=\lim_{l\to\infty} \frac{1}{\alpha_l^2}\int_{\Gamma} \int_\tau^{\tau+\alpha_l} \!\!\! \int_\tau^t\mathbb{E}\Big[ \Big\langle \Phi_\gamma(\tau)  {\Phi_\gamma(s)}^{-1}\partial_u\sigma_\gamma(s) (v^k-\bar{u}(s)), \\
&\indent\indent\indent\indent\indent  \left(\mathcal{D}_s\mathbb{S}_\gamma(t)-\nabla\mathbb{S}_\gamma(s)\right)^\top (v^k-\bar{u}(t))\Big\rangle \chi_{A_{ij}}(\omega)\Big]\mathrm{d}s\mathrm{d}t \lambda^*(\mathrm{d}\gamma)=0 \ \  a.e. \  \tau\in[t_i,T).
\end{aligned}
\end{equation*}
Besides, by (H6) and Lemma~\ref{lemma-cent-limi-int-tran}, it holds that 
\begin{equation*}
\begin{aligned}
\lim_{\alpha\to0^+} I_8(\alpha )=  \frac{1}{2} \int_{\Gamma}  \mathbb{E}\Big[ \Big\langle  \nabla\mathbb{S}_\gamma(\tau)  \partial_u\sigma_\gamma(\tau) (v^k-\bar{u}(\tau)),   v^k-\bar{u}(\tau)\Big\rangle \chi_{A_{ij}}(\omega)\Big] \lambda^*(\mathrm{d}\gamma)\ \  a.e. \  \tau\in[t_i,T).
\end{aligned}
\end{equation*}
Then
\begin{equation*}
\begin{aligned}
\lim_{l\to\infty}I_5(\alpha_l)=  \frac{1}{2} \int_{\Gamma}  \mathbb{E}\Big[ \Big\langle  \nabla\mathbb{S}_\gamma(\tau)  \partial_u\sigma_\gamma(\tau) (v^k-\bar{u}(\tau)),   v^k-\bar{u}(\tau)\Big\rangle \chi_{A_{ij}}(\omega)\Big] \lambda^*(\mathrm{d}\gamma)\ \  a.e. \  \tau\in[t_i,T).
\end{aligned}
\end{equation*}

Similarly, approximating $\mathcal{D}_\cdot\bar{u}(\cdot)$ by $\nabla\bar{u}(\cdot)$,  there exists a sub-sequence of the above sequence  ${\{\alpha_l\}}_{l=1}^{\infty}$, still denoted by itself for simplicity, such that
\begin{equation*}
\begin{aligned}
\lim_{l\to\infty}I_6(\alpha_l)=  \frac{1}{2} \int_{\Gamma}\mathbb{E} \Big[ \Big\langle  \mathbb{S}_\gamma(\tau) \partial_u\sigma_\gamma(\tau) (v^k-\bar{u}(\tau)),  \nabla\bar{u}(\tau)\Big\rangle \chi_{A_{ij}}(\omega)\Big] \lambda^*(\mathrm{d}\gamma)\quad   a.e. \  \tau\in[t_i,T).
\end{aligned}
\end{equation*}

Then there is a sequence  ${\{\alpha_l\}}_{l=1}^{\infty}$, such that
\begin{equation*}
\begin{aligned}
&\lim_{l\to\infty}I_2(\alpha_l)=  \frac{1}{2} \int_{\Gamma}  \mathbb{E}\Big[ \Big\langle  \nabla\mathbb{S}_\gamma(\tau)  \partial_u\sigma_\gamma(\tau) (v^k-\bar{u}(\tau)),   v^k-\bar{u}(\tau)\Big\rangle \chi_{A_{ij}}(\omega)\Big] \lambda^*(\mathrm{d}\gamma)  \\
&\indent\indent\indent\indent -\frac{1}{2} \int_{\Gamma}\mathbb{E} \Big[ \Big\langle  \mathbb{S}_\gamma(\tau) \partial_u\sigma_\gamma(\tau) (v^k-\bar{u}(\tau)),  \nabla\bar{u}(\tau)\Big\rangle \chi_{A_{ij}}(\omega)\Big] \lambda^*(\mathrm{d}\gamma)  \quad   
 a.e. \  \tau\in[t_i,T).
\end{aligned}
\end{equation*}

\textbf{Part 3:}   \
Denote 
$I_3(\alpha )=:\int_{\Gamma} I_3^0(\alpha ;\gamma ) \lambda^*(\mathrm{d}\gamma)$.
From H\"{o}lder's  inequality,  Burkholder-Davis-Gundy inequality, and $\overline{u}(\cdot)\in \mathcal{U}^4 $, it can be verified that for any  $\gamma\in\Gamma$,
\begin{align*}
    |I_3^0(\alpha ;\gamma )|
&=\Big|\frac{1}{\alpha^2} \int_\tau^{\tau+\alpha} \mathbb{E} \Big[ \Big\langle\mathbb{S}_\gamma(t)\int_\tau^t\partial_x b_\gamma(s)\Phi_\gamma(s)\mathrm{d}s\int_\tau^t \Phi_\gamma(s)^{-1}\partial_u\sigma_\gamma(s)   \\
&\indent\indent\indent\indent\indent\indent \cdot (v^k-\bar{u}(s))\chi_{A_{ij}}(\omega)\mathrm{d}W(s),v^k-\bar{u}(t)\Big\rangle \chi_{A_{ij}}(\omega) \Big] \mathrm{d}t \Big|  \\
&   \lesssim \frac{1}{\alpha^2} \int_\tau^{\tau+\alpha} (t-\tau)^{\frac{3}{2}} \big[\mathbb{E}\left|\mathbb{S}_\gamma(t)\right|^2\big]^{\frac{1}{2}}\mathrm{d}t,
\end{align*}
and
\begin{equation*}
\begin{aligned}
\lim_{\alpha\to0^+} \frac{1}{\alpha^2} \int_\tau^{\tau+\alpha} (t-\tau)^{\frac{3}{2}} \big[\mathbb{E}\left|\mathbb{S}_\gamma(t)\right|^2\big]^{\frac{1}{2}}\mathrm{d}t=0 \quad  a.e. \  \tau\in[t_i,T),\ \gamma\in\Gamma.
\end{aligned}
\end{equation*}
Besides, by the estimate \eqref{jhzls},
$
\lim_{\ell \to 0}\sup_{ \mathsf{d} (\gamma,\gamma')\le\ell }\int_0^T\mathbb{E} |\mathbb{S}_\gamma(t)-\mathbb{S}_{\gamma'}(t) |^2\mathrm{d}t=0.
$
Then actually
\begin{equation*}\label{es-sthe-cont-conc1}
\begin{aligned}
\lim_{\alpha\to0^+} \frac{1}{\alpha^2} \int_\tau^{\tau+\alpha} (t-\tau)^{\frac{3}{2}} \big[\mathbb{E}\left|\mathbb{S}_\gamma(t)\right|^2\big]^{\frac{1}{2}}\mathrm{d}t=0 \quad   \forall \gamma\in\Gamma,\  a.e. \  \tau\in[t_i,T).
\end{aligned}
\end{equation*}
By H\"{o}lder's inequality and Tonelli's theorem, we have 
\begin{align*}
\lim_{\alpha\to0^+}I_3 (\alpha ) 
&
=\lim_{\alpha\to0^+} \int_{\Gamma} I_3^0(\alpha ;\gamma ) \lambda^*(\mathrm{d}\gamma)
\lesssim \lim_{\alpha\to0^+}  \frac{1}{\alpha^2} \int_\Gamma \int_\tau^{\tau+\alpha} (t-\tau)^{\frac{3}{2}} \big[\mathbb{E}\left|\mathbb{S}_\gamma(t)\right|^2\big]^{\frac{1}{2}}\mathrm{d}t \lambda^*(\mathrm{d}\gamma)
\\ &  
\lesssim \lim_{\alpha\to0^+} \Big[ \int_\tau^{\tau+\alpha} \int_\Gamma \mathbb{E}\left|\mathbb{S}_\gamma(t)\right|^2 \lambda^*(\mathrm{d}\gamma) \mathrm{d}t \Big]^{\frac{1}{2}} 
=0 \quad   a.e. \  \tau\in[t_i,T).
\end{align*}

\textbf{Part 4:}  \
For $I_4(\alpha )$ we firstly insert two  auxiliary items and see that  
\begin{align*}
I_4(\alpha )&=\frac{1}{\alpha^2}\int_{\Gamma} \int_\tau^{\tau+\alpha} \mathbb{E}\Big[ \Big\langle\int_\tau^t\partial_x \sigma_\gamma(s)\Phi_\gamma(s)\mathrm{d}W(s)\int_\tau^t \Phi_\gamma(s)^{-1}\partial_u\sigma_\gamma(s)(v^k-\bar{u}(s))\chi_{A_{ij}}(\omega)\mathrm{d}W(s), \\&\indent\indent\indent\indent\indent  \indent  \mathbb{S}_\gamma(t)^\top(v^k-\bar{u}(t))-\mathbb{S}_\gamma(\tau)^\top(v^k-\bar{u}(\tau))\Big\rangle \chi_{A_{ij}}(\omega)\Big]\mathrm{d}t \lambda^*(\mathrm{d}\gamma)  
\\
&\indent  +\frac{1}{\alpha^2}\int_{\Gamma} \int_\tau^{\tau+\alpha} \mathbb{E}\Big[ \Big\langle\mathbb{S}_\gamma(\tau)\int_\tau^t\partial_x \sigma_\gamma(s)\Phi_\gamma(s)\mathrm{d}W(s)\int_\tau^t \Phi_\gamma(s)^{-1}\partial_u\sigma_\gamma(s) 
\\&\indent\indent\indent\indent\indent\indent \cdot (v^k-\bar{u}(s))\chi_{A_{ij}}(\omega)\mathrm{d}W(s),v^k-\bar{u}(\tau)\Big\rangle \chi_{A_{ij}}(\omega)\Big]\mathrm{d}t \lambda^*(\mathrm{d}\gamma)   \\
&   =:I_9(\alpha )+I_{10}(\alpha ).
\end{align*}
For any $\gamma\in\Gamma$, by calculus of  It\^o integration and H\"{o}lder's inequality, it can be directly verified that 
\begin{align*}
&   \frac{1}{\alpha^2} \Big| \int_\tau^{\tau+\alpha} \mathbb{E} \Big\langle\int_\tau^t\partial_x \sigma_\gamma(s)\Phi_\gamma(s)\mathrm{d}W(s)\int_\tau^t \Phi_\gamma(s)^{-1}\partial_u\sigma_\gamma(s)(v^k-\bar{u}(s))\chi_{A_{ij}}(\omega)\mathrm{d}W(s), \\&\indent\indent\indent\indent\indent  \indent  \mathbb{S}_\gamma(t)^\top(v^k-\bar{u}(t))-\mathbb{S}_\gamma(\tau)^\top(v^k-\bar{u}(\tau))\Big\rangle \chi_{A_{ij}}(\omega)\mathrm{d}t \Big|  \\
&\indent  \lesssim \frac{1}{\alpha^{\frac{1}{2}}}\Big[\int_\tau^{\tau+\alpha} \mathbb{E}\left|\mathbb{S}_\gamma(t)^\top(v^k-\bar{u}(t))-\mathbb{S}_\gamma(\tau)^\top(v^k-\bar{u}(\tau))\right|^2 \mathrm{d}t\Big]^{\frac{1}{2}},
\end{align*}
and further by \eqref{jhzls} and Tonelli's theorem, we have 
\begin{align*}
    \lim_{\alpha\to0^+}I_9(\alpha )& \lesssim  
\lim_{\alpha\to0^+}\int_{\Gamma}\frac{1}{\alpha^{\frac{1}{2}}}\Big[\int_\tau^{\tau+\alpha} \mathbb{E}\left|\mathbb{S}_\gamma(t)^\top(v^k-\bar{u}(t))-\mathbb{S}_\gamma(\tau)^\top(v^k-\bar{u}(\tau))\right|^2 \mathrm{d}t\Big]^{\frac{1}{2}} \lambda^*(\mathrm{d}\gamma) 
\\  &  \le  
\lim_{\alpha\to0^+} \Big[\frac{1}{\alpha} \int_\tau^{\tau+\alpha} \int_{\Gamma} \mathbb{E}\left|\mathbb{S}_\gamma(t)^\top(v^k-\bar{u}(t))-\mathbb{S}_\gamma(\tau)^\top(v^k-\bar{u}(\tau))\right|^2   \lambda^*(\mathrm{d}\gamma) \mathrm{d}t \Big]^{\frac{1}{2}}
\\  &  
=0 \quad   a.e. \  \tau\in[t_i,T).
\end{align*}
Besides, by calculus of  It\^o integration again and Lemma~\ref{lemma-cent-limi-int-tran}, for $a.e. \  \tau\in[t_i,T)$, it follows 
\begin{align*}
& \lim_{\alpha\to0^+}I_{10}(\alpha ) =\frac{1}{2}\int_{\Gamma} \mathbb{E}\Big[ \Big\langle\mathbb{S}_\gamma(\tau) \partial_x \sigma_\gamma(\tau) \partial_u\sigma_\gamma(\tau)   
(v^k-\bar{u}(\tau)), v^k-\bar{u}(\tau)\Big\rangle \chi_{A_{ij}}(\omega) \Big]\lambda^*(\mathrm{d}\gamma)  .
\end{align*}
Then for $a.e. \  \tau\in[t_i,T)$, we obtain 
\begin{align*}
\lim_{\alpha\to0^+}I_4(\alpha )&=\lim_{\alpha\to0^+}(I_9(\alpha )+I_{10}(\alpha )) \\
&=\frac{1}{2}\int_{\Gamma}\mathbb{E}\Big[ \Big\langle\mathbb{S}_\gamma(\tau) \partial_x \sigma_\gamma(\tau) \partial_u\sigma_\gamma(\tau)  (v^k-\bar{u}(\tau)), v^k-\bar{u}(\tau)\Big\rangle \chi_{A_{ij}}(\omega) \Big]\lambda^*(\mathrm{d}\gamma) .
\end{align*}

\textbf{Part 5:}   \ By the above estimations for $I_1(\alpha ), I_2(\alpha ), I_3(\alpha ), I_4(\alpha )$,  for the arbitrarily fixed $i,j,k$ in the beginning of the proof, there exists a Lebesgue measurable set $E_{i,j}^k\subset[t_i,T)$ satisfying $|E_{i,j}^k|=0$, such that
\begin{align*}
0&\ge \lim_{\alpha\to0^+}\(I_1(\alpha )+I_2(\alpha )+I_3(\alpha )+I_4(\alpha ) \) \\
& =\frac{1}{2} \int_{\Gamma}\mathbb{E}\Big[ \Big\langle\mathbb{S}_\gamma(\tau) \partial_u b_\gamma(\tau) (v^k-\bar{u}(\tau)),v^k-\bar{u}(\tau)\Big\rangle \chi_{A_{ij}}(\omega) \Big]\lambda^*(\mathrm{d}\gamma)  \\
&\quad  +\frac{1}{2} \int_{\Gamma} \mathbb{E}\Big[ \Big\langle  \nabla\mathbb{S}_\gamma(\tau)  \partial_u\sigma_\gamma(\tau) (v^k-\bar{u}(\tau)),   v^k-\bar{u}(\tau)\Big\rangle \chi_{A_{ij}}(\omega)\Big] \lambda^*(\mathrm{d}\gamma)  \\
&\quad  -\frac{1}{2} \int_{\Gamma}\mathbb{E} \Big[ \Big\langle  \mathbb{S}_\gamma(\tau) \partial_u\sigma_\gamma(\tau) (v^k-\bar{u}(\tau)),  \nabla\bar{u}(\tau)\Big\rangle \chi_{A_{ij}}(\omega)\Big] \lambda^*(\mathrm{d}\gamma) \quad   \forall\tau\in[t_i,T)\backslash E_{i,j}^k.
\end{align*}
Then by the density of $\{v^k\}_{k=1}^\infty$, it deduces
\begin{align*}
&\int_{\Gamma}\mathbb{E}\Big[ \Big\langle\mathbb{S}_\gamma(\tau) \partial_u b_\gamma(\tau) (v-\bar{u}(\tau)),v-\bar{u}(\tau)\Big\rangle \chi_{A_{ij}}(\omega) \Big] \lambda^*(\mathrm{d}\gamma)  \\
&\indent  +\int_{\Gamma} \mathbb{E}\Big[ \Big\langle  \nabla\mathbb{S}_\gamma(\tau)  \partial_u\sigma_\gamma(\tau) (v-\bar{u}(\tau)), v-\bar{u}(\tau)\Big\rangle \chi_{A_{ij}}(\omega)\Big] \lambda^*(\mathrm{d}\gamma)  \\
&\indent -\int_{\Gamma}\mathbb{E} \Big[ \Big\langle  \mathbb{S}_\gamma(\tau) \partial_u\sigma_\gamma(\tau) (v-\bar{u}(\tau)),  \nabla\bar{u}(\tau)\Big\rangle \chi_{A_{ij}}(\omega)\Big] \lambda^*(\mathrm{d}\gamma)   \\
&\quad \le0 \quad  \forall(\tau,v)\in([t_i,T)\backslash \cup_{i,j,k\in\mathbb{N}}E_{i,j}^k)\times U.
\end{align*}
Furthermore, by the Assumption (H6) and  Fubini's theorem, it yields 
\begin{align*}
&\mathbb{E}\Big[\int_{\Gamma} \Big\langle\mathbb{S}_\gamma(\tau) \partial_u b_\gamma(\tau) (v-\bar{u}(\tau)),v-\bar{u}(\tau)\Big\rangle \lambda^*(\mathrm{d}\gamma) \chi_{A_{ij}}(\omega) \Big] \\
&\indent  + \mathbb{E}\Big[\int_{\Gamma} \Big\langle  \nabla\mathbb{S}_\gamma(\tau)  \partial_u\sigma_\gamma(\tau) (v-\bar{u}(\tau)), v-\bar{u}(\tau)\Big\rangle \lambda^*(\mathrm{d}\gamma)\chi_{A_{ij}}(\omega)\Big]  \\
&\indent -\mathbb{E} \Big[\int_{\Gamma} \Big\langle  \mathbb{S}_\gamma(\tau) \partial_u\sigma_\gamma(\tau) (v-\bar{u}(\tau)),  \nabla\bar{u}(\tau)\Big\rangle \lambda^*(\mathrm{d}\gamma)\chi_{A_{ij}}(\omega)\Big]   \\
&\quad \le0 \quad  \forall(\tau,v)\in([t_i,T)\backslash \cup_{i,j,k\in\mathbb{N}}E_{i,j}^k)\times U.
\end{align*}
Recall that   $\{A_{ij}\}_{j=1}^{\infty}(\subset\mathcal{F}_{t_i})$ are taken  generating $\mathcal{F}_{t_i}$ and $\mathbb{F}$ is the natural filtration. Then we obtain 
\begin{align*}
&\int_{\Gamma} \Big\langle\mathbb{S}_\gamma(\tau) \partial_u b_\gamma(\tau) (v-\bar{u}(\tau)),v-\bar{u}(\tau)\Big\rangle \lambda^*(\mathrm{d}\gamma)  \\
&\indent  + \int_{\Gamma} \Big\langle  \nabla\mathbb{S}_\gamma(\tau)  \partial_u\sigma_\gamma(\tau) (v-\bar{u}(\tau)), v-\bar{u}(\tau)\Big\rangle \lambda^*(\mathrm{d}\gamma)  \\
&\indent -\int_{\Gamma} \Big\langle  \mathbb{S}_\gamma(\tau) \partial_u\sigma_\gamma(\tau) (v-\bar{u}(\tau)),  \nabla\bar{u}(\tau)\Big\rangle \lambda^*(\mathrm{d}\gamma)  \\
&\quad \le0  \quad a.s.\  \forall(\tau,v)\in([t_i,T)\backslash \cup_{i,j,k\in\mathbb{N}}E_{i,j}^k)\times U.
\end{align*}
The proof is completed.
\end{proof}

\appendix 
\section*{Appendix}

\setcounter{section}{0}
\section{Technical lemmas}

\renewcommand{\thesubsection}{\Alph{subsection}}

The following  Minimax Theorem (cf. \cite{pham09}) describes conditions that guarantee the saddle point property, and it plays an essential role in this paper to derive the weak limit of the uncertainty probability measures. 
\begin{lem}[Minimax Theorem]\label{mimmaxthe}
    Let $\mathcal{M}$  be a convex subset of a normed vector space,  and  $\mathcal{N}$  a weakly compact convex subset of a normed vector space. If  $\varphi$  is a real-valued function on $ \mathcal{M} \times  \mathcal{N} $ with 
    
    $x\to \varphi (x,y)$ is continuous and convex on $\mathcal{M}$ for all $y\in \mathcal{N}$,

    $y\to \varphi (x,y)$ is concave on $\mathcal{N}$ for all $x\in \mathcal{M}$.

    \noindent 
    Then 
\[\sup_{y\in \mathcal{N}}\inf_{x\in \mathcal{M}}\varphi (x,y) =\inf_{x\in \mathcal{M}}\sup_{y\in \mathcal{N}}\varphi (x,y).\]
    \end{lem}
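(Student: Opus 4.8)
The plan is to prove the two inequalities separately. The inequality $\sup_{y\in\mathcal N}\inf_{x\in\mathcal M}\varphi(x,y)\le\inf_{x\in\mathcal M}\sup_{y\in\mathcal N}\varphi(x,y)$ is elementary and uses none of the structural hypotheses: for any $x_0\in\mathcal M$ and $y_0\in\mathcal N$ one has $\inf_{x\in\mathcal M}\varphi(x,y_0)\le\varphi(x_0,y_0)\le\sup_{y\in\mathcal N}\varphi(x_0,y)$, and taking the infimum over $x_0$ on the left and then the supremum over $y_0$ on the right gives the claim. So the entire content is the reverse inequality $\inf_{x\in\mathcal M}\sup_{y\in\mathcal N}\varphi(x,y)\le r$, where $r:=\sup_{y\in\mathcal N}\inf_{x\in\mathcal M}\varphi(x,y)$.

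For the reverse inequality I would fix $\varepsilon>0$ and reduce to the following claim: there is $x^{*}\in\mathcal M$ with $\varphi(x^{*},y)\le r+\varepsilon$ for every $y\in\mathcal N$; granting this, $\inf_{x}\sup_{y}\varphi(x,y)\le r+\varepsilon$ for all $\varepsilon>0$, whence $\le r$. Introduce, for $x\in\mathcal M$, the ``bad set'' $B_x:=\{y\in\mathcal N:\varphi(x,y)\ge r+\varepsilon\}$; the claim is equivalent to $B_{x^{*}}=\emptyset$ for some $x^{*}$. Because $\varphi(x,\cdot)$ is concave, each $B_x$ is convex; and each $B_x$ is weakly closed — here one uses that a concave function which is upper semicontinuous for the norm topology is also upper semicontinuous for the weak topology (its hypograph is convex and norm-closed, hence weakly closed), a mild regularity in $y$ that is automatic for the $\varphi=\mathfrak I(v;\cdot)$ to which the lemma is applied, since there $\varphi$ is even linear, hence weakly continuous, in $\lambda$. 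Thus every $B_x$ is a weakly compact convex subset of $\mathcal N$. Moreover $\bigcap_{x\in\mathcal M}B_x=\emptyset$: for each $y\in\mathcal N$ we have $\inf_{x}\varphi(x,y)\le r<r+\varepsilon$, so some $x$ leaves $y$ outside $B_x$.

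It remains to pass from ``empty total intersection'' to ``one member already empty'', and this is the heart of the matter; I would argue by contradiction in the style of Sion. Suppose no $B_x$ is empty. By weak compactness of $\mathcal N$ there is a finite subfamily with $\bigcap_{i=1}^{n}B_{x_i}=\emptyset$; take $n$ minimal, so $n\ge2$ and every $(n-1)$-fold subintersection is nonempty. Set $K:=B_{x_3}\cap\dots\cap B_{x_n}$ (read $K:=\mathcal N$ when $n=2$), a nonempty weakly compact convex set, and write $f_i:=\varphi(x_i,\cdot)$, concave and weakly upper semicontinuous on $K$. Minimality gives $B_{x_1}\cap K\neq\emptyset\neq B_{x_2}\cap K$ but $(B_{x_1}\cap K)\cap(B_{x_2}\cap K)=\emptyset$, i.e. $\min\big(f_1(y),f_2(y)\big)<r+\varepsilon$ for every $y\in K$; by weak upper semicontinuity of $\min(f_1,f_2)$ and compactness of $K$ its maximum over $K$ is attained and is still $<r+\varepsilon$. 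Now for $t\in[0,1]$ put $g_t:=(1-t)f_1+tf_2$, affine in $t$ and concave in $y$; convexity of $\varphi(\cdot,y)$ gives $\varphi\big((1-t)x_1+tx_2,\,y\big)\le g_t(y)$, so it suffices to find $t_0$ with $\max_{y\in K}g_{t_0}(y)<r+\varepsilon$. This is a one-dimensional minimax, $\min_{t\in[0,1]}\max_{y\in K}g_t(y)=\max_{y\in K}\min_{t\in[0,1]}g_t(y)=\max_{y\in K}\min(f_1(y),f_2(y))<r+\varepsilon$, which one verifies directly (the left side is attained at some $t_0$ by convexity of $t\mapsto\max_y g_t(y)$, and a subgradient/concavity argument identifies the value). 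Then $B_{(1-t_0)x_1+t_0x_2}\cap K=\emptyset$, so replacing $B_{x_1},B_{x_2}$ by $B_{(1-t_0)x_1+t_0x_2}$ yields a subfamily of size $n-1$ with empty intersection, contradicting minimality. Hence some $B_{x^{*}}=\emptyset$, completing the proof.

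The main obstacle is precisely this last finite-to-global reduction: it is the step that reconciles ``the compact set is $\mathcal N$'' with ``the outer infimum runs over the possibly non-compact $\mathcal M$'', and it forces the joint use of convexity in $x$ and concavity in $y$ together with the auxiliary one-dimensional minimax; one must also be slightly careful that the $B_x$ are genuinely weakly closed, which is where the weak upper semicontinuity remark (trivial in the linear case actually used in the paper) enters. By contrast the trivial inequality, the passage $\varepsilon\downarrow0$, and the one-dimensional minimax computation are all routine; alternatively, the finite-dimensional reduction could be replaced by invoking the classical von Neumann minimax theorem on $\mathrm{conv}\{x_1,\dots,x_n\}$ against finitely many $y$'s, combined with a Hahn--Banach separation argument, but the structure of the obstacle is the same.
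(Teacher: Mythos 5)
The paper does not prove this lemma at all: it is stated as a quoted result with a citation to Pham's book and used as a black box in the proof of Corollary \ref{thmfirornedesf}, so there is no in-paper argument to compare against. What you have written is a self-contained reconstruction of the standard Sion--Komiya proof, and its architecture is sound: the trivial inequality, the reduction to emptiness of some bad set $B_{x^*}$, the finite-intersection-property step via weak compactness, and the $n\to n-1$ descent using a point of the segment $[x_1,x_2]$ (legitimate since $\mathcal M$ is convex and minimality is taken over all finite subfamilies indexed by $\mathcal M$) are all correct. You also rightly notice that the lemma as transcribed in the paper omits any semicontinuity of $y\mapsto\varphi(x,y)$, without which the sets $B_x$ need not be weakly closed and the compactness argument collapses; your patch --- concave plus norm upper semicontinuous implies weakly upper semicontinuous, and the map $\lambda\mapsto\mathfrak I(v;\lambda)$ actually fed into the lemma is linear, hence weakly continuous --- is the right one. (Incidentally, the continuity of $x\mapsto\varphi(x,y)$ assumed in the statement is never used in your argument, which is consistent with this formulation of the theorem.)

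The one place where you assert rather than prove is the two-function minimax
$\inf_{t\in[0,1]}\max_{y\in K}g_t(y)=\max_{y\in K}\min\bigl(f_1(y),f_2(y)\bigr)$;
this is precisely the lemma that carries the whole weight of Sion's theorem, and ``a subgradient/concavity argument identifies the value'' is not a proof. The standard way to close it: supposing the left side exceeds the right, pick $\beta$ strictly between them and consider the nonempty convex sets $A_t=\{y\in K:g_t(y)\ge\beta\}$; since $g_t\le\max(f_1,f_2)$ pointwise, each $A_t$ lies in the union of the two disjoint weakly closed convex sets $\{f_1\ge\beta\}$ and $\{f_2\ge\beta\}$, hence (being convex, so weakly connected) in exactly one of them; the sets $\{t:A_t\subset\{f_1\ge\beta\}\}$ and $\{t:A_t\subset\{f_2\ge\beta\}\}$ are then two disjoint nonempty subintervals partitioning $[0,1]$, and a compactness/upper-semicontinuity argument shows both are closed, contradicting connectedness of $[0,1]$. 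Note also that you do not need attainment of $\inf_t\max_yg_t$ (which is slightly delicate for a convex function on a closed interval); any $t_0$ with $\max_{y\in K}g_{t_0}(y)<r+\varepsilon$ suffices. With that step filled in, the proof is complete.
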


The following Partitions of Unity Theorem (cf.  \cite{rudin06}) is important in obtaining measurability, with which we can approximate target processes by simple adapted ones and then make uniform estimates. 
\begin{lem}[Partitions of Unity Theorem]\label{partunith}
    Denote by $\{\mathcal{V}_{i}\}_{i=1}^n$ any open subsets of a locally compact Hausdorff space $ \mathfrak{M}  $. If a compact set $\mathcal{K} $ satisfies 
    \[\mathcal{K} \subset \mathcal{V}_1 \cup \ldots \cup \mathcal{V}_n, \]
    then there exists $\{\varphi_i\}_{i=1}^n$ satisfying 
    \[\Sigma_{i=1}^n \varphi_i(x)=1,\quad x\in \mathcal{K},\]
    where $\varphi_i$ is continuous on $\mathfrak{M}$, with its compact support lies in $\mathcal{V}_i$, $0\le \varphi_i \le 1$, $i=1,\ldots  n$.
\end{lem}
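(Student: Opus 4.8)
The plan is to reduce the theorem to the construction of individual ``bump'' functions via Urysohn's lemma and then to patch them together through a telescoping product. Concretely, I would first produce, for each index $i$, a continuous function $g_i:\mathfrak{M}\to[0,1]$ whose compact support lies in $\mathcal{V}_i$ and which equals $1$ on a suitable compact piece of $\mathcal{K}$ lying inside $\mathcal{V}_i$; then I would set
\[
\varphi_1=g_1,\qquad \varphi_i=(1-g_1)(1-g_2)\cdots(1-g_{i-1})\,g_i\quad (2\le i\le n),
\]
and verify that these do the job. The key algebraic identity, provable by an immediate induction, is
\[
\sum_{i=1}^n\varphi_i = 1-(1-g_1)(1-g_2)\cdots(1-g_n),
\]
which shows that $\sum_i\varphi_i(x)=1$ at precisely those $x$ where at least one $g_i(x)=1$.

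To build the $g_i$, I would first invoke the standard interposition fact for a locally compact Hausdorff space: whenever a compact set $H$ is contained in an open set $V$, there is an open $W$ with compact closure satisfying $H\subset W\subset\overline{W}\subset V$. Using this together with local compactness, each point $x\in\mathcal{K}$—which lies in some $\mathcal{V}_{i(x)}$—has an open neighborhood $W_x$ with $\overline{W_x}$ compact and $\overline{W_x}\subset\mathcal{V}_{i(x)}$. Since $\mathcal{K}$ is compact, finitely many $W_{x_1},\dots,W_{x_m}$ cover it. For each $i$ I would let $H_i$ be the union of those $\overline{W_{x_j}}$ that are contained in $\mathcal{V}_i$; then $H_i$ is compact, $H_i\subset\mathcal{V}_i$, and $\mathcal{K}\subset H_1\cup\cdots\cup H_n$ because every $x\in\mathcal{K}$ lies in some $W_{x_j}$ whose closure was assigned to some $H_i$. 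Urysohn's lemma for locally compact Hausdorff spaces then furnishes the desired $g_i$ with $0\le g_i\le 1$, $g_i\equiv 1$ on $H_i$, and $\mathrm{supp}\,g_i$ a compact subset of $\mathcal{V}_i$.

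It remains to check the three claimed properties of the $\varphi_i$. Continuity and the bound $0\le\varphi_i\le1$ are immediate since each $\varphi_i$ is a finite product of continuous functions valued in $[0,1]$. The support condition $\mathrm{supp}\,\varphi_i\subset\mathrm{supp}\,g_i\subset\mathcal{V}_i$ (compact) follows because $\varphi_i$ vanishes wherever $g_i$ does. Finally, for $x\in\mathcal{K}$ the covering $\mathcal{K}\subset\bigcup_i H_i$ guarantees $g_{i_0}(x)=1$ for some $i_0$, so the product $(1-g_1)\cdots(1-g_n)$ vanishes at $x$ and the identity above gives $\sum_i\varphi_i(x)=1$. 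The main obstacle is not the patching argument, which is routine once the $g_i$ are in hand, but securing the two topological inputs in the locally compact Hausdorff generality—namely the interposition lemma producing relatively compact separating neighborhoods and Urysohn's lemma itself; both rest on the ability to separate a compact set from a closed set by disjoint open sets and on the existence of neighborhoods with compact closure, which is exactly where local compactness (as opposed to mere compactness) is indispensable. Since the paper cites \cite{rudin06} for this classical result, I would either quote these two inputs directly or, if a self-contained treatment is wanted, establish them first.
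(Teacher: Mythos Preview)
Your proposal is correct and is precisely the classical argument given in the reference the paper cites (\cite{rudin06}, Theorem~2.13): the paper does not supply its own proof of this lemma but simply quotes Rudin, and your shrinking-of-the-cover followed by Urysohn bumps and the telescoping identity $\sum_i\varphi_i=1-\prod_i(1-g_i)$ is exactly Rudin's construction. There is nothing to add.
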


\renewcommand{\theequation}{B.\arabic{equation}}
{\small 
\section{A comment on (H6) (ii)}
One may wonder whether the Assumption (H6) (ii) used in Theorem~\ref{thm-point-nece-cla} can be obtained with direct estimates utilizing such as the following estimates \eqref{lemconti-sthetetheequ}.
\begin{lem}[{\cite[Lemma 5.1.4]{NualartDavidEulalia}}]\label{lemconti-sthetethe}
Fix an integer $k \ge 1$ and a real number $p > 1$. Then, there
exists a constant $c_{p,k}$ such that, for any random variable $F \in \mathbb{D}^{k,2}$,
\begin{equation}\label{lemconti-sthetetheequ}
\|\mathbb{E}(\mathcal{D}^k F)\|_{H^{\bigotimes k}} \le c_{p,k} \|F\|_p .
\end{equation}
\end{lem}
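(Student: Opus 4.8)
The plan is to pass to the Wiener chaos decomposition and reduce the inequality to the $L^{2}$-contractivity of a chaos projection together with Nelson's hypercontractivity. First I would write $F=\sum_{n\ge 0}I_{n}(f_{n})$ with symmetric kernels $f_{n}\in H^{\otimes n}$, where $I_{n}$ denotes the $n$-th multiple Wiener--It\^o integral; the hypothesis $F\in\mathbb{D}^{k,2}$ guarantees that this series and all the ones appearing below converge, and in particular that $f_{k}\in H^{\otimes k}$ is well defined. Iterating the identity $\mathcal{D}_{t}I_{n}(f_{n})=n\,I_{n-1}(f_{n}(\cdot,t))$ gives $\mathcal{D}^{k}_{t_{1},\dots,t_{k}}F=\sum_{n\ge k}\tfrac{n!}{(n-k)!}\,I_{n-k}\big(f_{n}(\cdot,t_{1},\dots,t_{k})\big)$, and upon taking expectations only the $n=k$ term survives, so $\mathbb{E}(\mathcal{D}^{k}F)=k!\,f_{k}$. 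Using the isometry $\mathbb{E}\,|I_{k}(f_{k})|^{2}=k!\,\|f_{k}\|^{2}_{H^{\otimes k}}$ one then gets
\begin{equation*}
\|\mathbb{E}(\mathcal{D}^{k}F)\|_{H^{\otimes k}}=k!\,\|f_{k}\|_{H^{\otimes k}}=\sqrt{k!}\;\|J_{k}F\|_{L^{2}(\Omega)},
\end{equation*}
where $J_{k}$ is the orthogonal projection of $L^{2}(\Omega)$ onto the $k$-th Wiener chaos $\{I_{k}(g):g\in H^{\otimes k}\}$. Thus it suffices to bound $\|J_{k}F\|_{L^{2}}$ by a constant times $\|F\|_{p}$.

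For $p\ge 2$ this is immediate: since $J_{k}$ is an orthogonal projection in $L^{2}$ and the measure is a probability measure, $\|J_{k}F\|_{L^{2}}\le\|F\|_{L^{2}}\le\|F\|_{L^{p}}$, which yields the claim with $c_{p,k}=\sqrt{k!}$. For $1<p<2$ I would argue by duality. With $q=p/(p-1)>2$, and using that $J_{k}$ is self-adjoint,
\begin{equation*}
\|J_{k}F\|_{L^{2}}=\sup_{\|G\|_{L^{2}}\le 1}\big|\langle F,J_{k}G\rangle_{L^{2}}\big|\le\|F\|_{L^{p}}\;\sup_{\|G\|_{L^{2}}\le 1}\|J_{k}G\|_{L^{q}}.
\end{equation*}
Now Nelson's hypercontractivity for the Ornstein--Uhlenbeck semigroup $(T_{t})$, which acts on the $k$-th chaos as multiplication by $e^{-kt}$, gives $\|J_{k}G\|_{L^{q}}\le(q-1)^{k/2}\,\|J_{k}G\|_{L^{2}}\le(q-1)^{k/2}\|G\|_{L^{2}}$ after choosing $e^{2t}=q-1$. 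Since $q-1=(p-1)^{-1}$, this produces $\|J_{k}F\|_{L^{2}}\le(p-1)^{-k/2}\|F\|_{L^{p}}$, and combining the two regimes one may take $c_{p,k}=\sqrt{k!}\,\max\{1,(p-1)^{-k/2}\}$.

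The only non-elementary ingredient is Nelson's hypercontractivity theorem (equivalently, the Gaussian logarithmic Sobolev inequality), applied here on a single fixed chaos; this is the step I would expect to be the crux, everything else being the chaos expansion, the isometry, and H\"older's inequality. An alternative that avoids the duality step is to invoke Meyer's inequalities to obtain the $L^{p}$-boundedness of the chaos projector $J_{k}$ directly for all $1<p<\infty$, but the route above is shorter and self-contained. Since the statement is exactly \cite[Lemma~5.1.4]{NualartDavidEulalia}, in the paper itself we simply cite it and use \eqref{lemconti-sthetetheequ} as a black box when discussing Assumption (H6) (ii).
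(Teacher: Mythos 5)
The paper does not prove this lemma at all: it is quoted verbatim from \cite[Lemma 5.1.4]{NualartDavidEulalia} and used as a black box in Appendix B, so there is no in-paper argument to compare against. Your reconstruction is correct and is essentially the standard textbook proof. The identity $\mathbb{E}(\mathcal{D}^k F)=k!\,f_k$ follows, as you say, from iterating $\mathcal{D}_t I_n(f_n)=n\,I_{n-1}(f_n(\cdot,t))$ and the vanishing of the expectation of every nonconstant chaos; the isometry converts $k!\,\|f_k\|_{H^{\otimes k}}$ into $\sqrt{k!}\,\|J_kF\|_{L^2}$; and the only substantive step is the $L^p\to L^2$ bound for the chaos projection $J_k$, which your duality-plus-hypercontractivity argument handles correctly, yielding the explicit constant $c_{p,k}=\sqrt{k!}\,\max\{1,(p-1)^{-k/2}\}$. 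Two minor points worth tightening: $f_k\in H^{\otimes k}$ is already guaranteed by $F\in L^2(\Omega)$, and the role of the hypothesis $F\in\mathbb{D}^{k,2}$ is rather to make $\mathcal{D}^kF$ exist in $L^2(\Omega;H^{\otimes k})$ so that its Bochner expectation is well defined; and the termwise passage of $\mathbb{E}$ through the chaos series for $\mathcal{D}^kF$ should be justified by the convergence of that series in $L^2(\Omega;H^{\otimes k})$. Both are routine and do not affect the argument.
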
 \noindent 
Recall that for adapted process $\mathbb{S}(\cdot)$ and for $0<t<s$, it holds that $\mathcal{D}_s\mathbb{S}(t)=0$. 
Then by Lemma~\ref{lemconti-sthetethe} and \eqref{jhzls}, taking $k=1,p=2$, and $F=(\mathbb{S}_\gamma-\mathbb{S}_{\gamma'})(t)$, together with $H^{\bigotimes 1}=L^2(0,T)$, we get 
\begin{align*} 
    \int_{0}^{T}\left|\mathbb{E}\left[\mathcal{D}_s\mathbb{S}_\gamma(t)-\mathcal{D}_s\mathbb{S}_{\gamma'}(t)\right]\right|^2\mathrm{d}s & = \int_{0}^{t}\left|\mathbb{E}\left[\mathcal{D}_s\mathbb{S}_\gamma(t)-\mathcal{D}_s\mathbb{S}_{\gamma'}(t)\right]\right|^2\mathrm{d}s 
\lesssim \mathbb{E} |\mathbb{S}_\gamma(t)-\mathbb{S}_{\gamma'}(t)|^2  
\end{align*}
and then 
$\int_{0}^{T} \int_{0}^{t}\left|\mathbb{E}\left[\mathcal{D}_s\mathbb{S}_\gamma(t)-\mathcal{D}_s\mathbb{S}_{\gamma'}(t)\right]\right|^2\mathrm{d}s\mathrm{d}t \lesssim \int_{0}^{T} \mathbb{E} |\mathbb{S}_\gamma(t)-\mathbb{S}_{\gamma'}(t) |^2 \mathrm{d}t$, 
and at last 
\begin{equation}\label{lethetheprocontisupreop}
\begin{aligned}
&\lim_{\ell \to0} \sup_{ \mathsf{d} (\gamma,\gamma')\le\ell } \int_{0}^{T} \int_{0}^{t}\left|\mathbb{E}\left[\mathcal{D}_s\mathbb{S}_\gamma(t)-\mathcal{D}_s\mathbb{S}_{\gamma'}(t)\right]\right|^2\mathrm{d}s\mathrm{d}t    \\
& \indent =\lim_{\ell \to0} \sup_{ \mathsf{d} (\gamma,\gamma')\le\ell } \int_{0}^{T} \int_{0}^{T}\left|\mathbb{E}\left[\mathcal{D}_s\mathbb{S}_\gamma(t)-\mathcal{D}_s\mathbb{S}_{\gamma'}(t)\right]\right|^2\mathrm{d}s\mathrm{d}t =0.
\end{aligned}
\end{equation}
However,~\eqref{lethetheprocontisupreop} is merely about the weak continuity of $\mathcal{D}_s\mathbb{S}_\gamma(t)$ in $\gamma$  uniformly with respect to $(s,t)\in[0,T]\times[0,T]$ and is not sufficient to carry out the partitions of unity method to obtain the measurability of $\mathcal{D}_\cdot \mathbb{S}_\cdot (\cdot )$.
Actually, to perform the method, one needs the following  stronger uniform continuity:
\begin{equation}\label{lethetheprocontisupreop-nee}
\begin{aligned}
&\lim_{\ell \to0} \sup_{ \mathsf{d} (\gamma,\gamma')\le\ell }  \int_{0}^{T}\int_{0}^{T} \mathbb{E}\big( |\mathcal{D}_s\mathbb{S}_\gamma(t)-\mathcal{D}_s\mathbb{S}_{\gamma'}(t) |^2\big)  \mathrm{d}s\mathrm{d}t =0.
\end{aligned}
\end{equation}
To obtain~\eqref{lethetheprocontisupreop-nee}, one needs an estimate in the form of~\eqref{lemconti-sthetetheequ} but with the expectation being out of the Hilbert norm, which is not true in general.
}

{\small \section*{Funding and  Competing interests}
The authors have no competing interests to declare that are relevant to the content of this article. The authors have no relevant financial or non-financial interests to disclose. Research supported by National Key R\&D Program of China (No. 2022YFA1006300) and the NSFC (No. 12271030).  }

{\small \section*{Acknowledgments}
The authors are grateful to the editors and referees for helpful comments that have improved the paper.}

\end{document}